\definecolor{badgerred}{rgb}{0.715,0.004,0.004}
\definecolor{burntorange}{rgb}{0.801,0.332,0.0}
\newcounter{savesection}
\newcounter{apdxsection}
\renewcommand\appendix{\par
  \setcounter{savesection}{\value{section}}%
  \setcounter{section}{\value{apdxsection}}%
  \setcounter{subsection}{0}%
  \gdef\thesection{\@Alph\c@section}}
\newcommand\unappendix{\par
  \setcounter{apdxsection}{\value{section}}%
  \setcounter{section}{\value{savesection}}%
  \setcounter{subsection}{0}%
  \gdef\thesection{\@arabic\c@section}}
\numberwithin{equation}{section}
\newcommand{\addresseshere}{%
  \enddoc@text\let\enddoc@text\relax
}
\newcommand{\mat}{\begin{pmatrix}}
\newcommand{\rix}{\end{pmatrix}}
\newcommand{\deter}{\left|\begin{matrix}}
\newcommand{\minant}{\end{matrix}\right|}
\newcommand{\R}{\mathbb{R}}
\newtheorem{thm}{Theorem}[section]
\newtheorem{cor}[thm]{Corollary}
\newtheorem{prop}[thm]{Proposition}
\newtheorem{lem}[thm]{Lemma}
\theoremstyle{definition}
\theoremstyle{remark}
\newtheorem{clm}[thm]{Claim}
\newcommand{\C}{\mathbb{C}}
\DeclareMathOperator{\divg}{div}
\DeclareMathOperator{\grph}{graph}
\DeclareMathOperator{\diam}{diam}
\title{The Structure of Translating Surfaces with Finite Total Curvature}
\author{Ilyas Khan}
\address{Ilyas Khan \\ University of Wisconsin--Madison \\480 Lincoln Drive\\ Madison, WI 53706, USA} \email{ikhan4@wisc.edu}
\begin{document}

\begin{abstract}
In this paper, we prove that any mean curvature flow translator $\Sigma^2 \subset \R^3$ with finite total curvature and one end must be a plane. We also prove that if the translator $\Sigma$ has multiple ends, they are asymptotic to a plane $\Pi$ containing the direction of translation and can be written as graphs over $\Pi$. Finally, we determine that the ends of $\Sigma$ are strongly asymptotic to $\Pi$ and obtain quantitative estimates for their asymptotic behavior.
\end{abstract}

\maketitle

\tableofcontents

\section{Introduction}

A proper, embedded hypersurface $\Sigma^2 \subset \R^3$ is a mean curvature flow translator if there is a unit vector $V$ such that $\Sigma$ satisfies the following equation:
\begin{equation}\label{transeq}
\vec{H} = V^\perp,
\end{equation}
where $\vec{H}$ is the mean curvature of $\Sigma$ at a point $p$, and $V^\perp$ is the component of $V$ perpendicular to the tangent plane to $\Sigma$ at $p$. This is equivalent to saying that the family of surfaces $\{\Sigma + tV \; :\; t \in (-\infty, \infty)\}$ satisfies the mean cuvature flow (MCF) equation,
\begin{equation}\label{mcfeqn}
(\partial_t \mathbf{x})^\perp = \vec{H}
\end{equation}
where $\mathbf{x}(p)$ is the vector in $\R^3$ corresponding to the position of the point $p \in \Sigma$. Translating solitons, or translators, are important singularity models for Type II singularities of the MCF and are in general highly significant examples of ancient, immortal and eternal solutions of MCF. 

In this paper, we prove the following structure theorems for embedded translators with finite total curvature. 

\begin{thm}\label{oneend} Let $\Sigma^2 \hookrightarrow \R^3$ be a complete embedded MCF translator with one end, and finite total curvature
\begin{equation}\label{hypothesis1}
\int_\Sigma |A|^2 < \infty.  
\end{equation}
Then, $\Sigma$ is a plane parallel to $V$, the direction of motion.
\end{thm}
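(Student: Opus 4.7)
\emph{Step (i): asymptotic planarity of the end.} The plan is to show, in order, that the end of $\Sigma$ is asymptotic to a 2-plane $\Pi$, that $\Pi$ contains $V$, and finally that $\Sigma = \Pi$. For the first statement, the translator bound $|H|\le|V|=1$ makes \eqref{transeq} a lower-order perturbation of the minimal surface equation, so the hypothesis \eqref{hypothesis1} places $\Sigma$ within reach of finite-total-curvature theory. The identity $2K = H^2 - |A|^2$ gives $\int_\Sigma |K| \lesssim \int_\Sigma |A|^2 < \infty$, so Huber's theorem yields finite topology. Standard interior curvature estimates for translators should upgrade $L^2$-smallness of $|A|$ near infinity to pointwise decay $|A|(x)\to 0$; together with embeddedness and one-endedness, a blow-down argument then produces a unique asymptotic 2-plane $\Pi$ over which the end is $C^{1,\alpha}$-graphical with small slope and quadratic area growth.

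\emph{Step (ii): $\Pi$ contains $V$.} On any translator $|H|^2 = \langle V,N\rangle^2$, so \eqref{hypothesis1} gives $\int_\Sigma \langle V,N\rangle^2 < \infty$. Since $N\to N_\Pi$ along the end, and the end has infinite area by (i), this forces $\langle V, N_\Pi\rangle = 0$, i.e., $V\in\Pi$; in other words, $\Pi$ is parallel to the direction of translation.

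\emph{Step (iii): rigidity.} Choose coordinates with $\Pi = \{x_3 = 0\}$ and $V = e_1$. The end is a graph $x_3 = u(x_1,x_2)$ outside a compact set, with $u, \nabla u \to 0$ at infinity, and the graphical translator equation for $u$ takes the form
\[
    \mathrm{div}\!\left(e^{x_1}\,\frac{\nabla u}{\sqrt{1+|\nabla u|^2}}\right) = 0,
\]
realizing $u$ as a weighted minimal graph in the conformal metric $e^{x_1}\delta$; in particular, two such solutions satisfy the usual comparison principle. Applying this to $u$ and its $V$-translates $u_t(x):= u(x-tV)$ --- each of which is also a translator graph asymptotic to $\Pi$ --- a sliding / moving-plane argument, valid because the one-end assumption eventually places $\Sigma$ on a definite side of each translate of $\Pi$, forces $u\equiv 0$ on the end. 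Real analyticity of translators and unique continuation then extend this to $\Sigma = \Pi$.

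The main obstacle is Step (iii): promoting the asymptotic decay of $u$ to the global identity $u\equiv 0$ on an annular domain whose inner boundary need not lie in $\Pi$. This is precisely where the ``one end'' hypothesis becomes essential, since with several ends $\Sigma$ may weave across $\Pi$ near the core and defeat any direct sliding comparison. Step (i) is also technical, but its ingredients (finite topology, pointwise curvature decay, and uniqueness of the asymptotic plane) follow along well-trodden lines once the correct minimal-surface analogues are identified.
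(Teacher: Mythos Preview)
Your Step~(iii) contains the essential gap. The sliding comparison between $u$ and its $V$-translates $u_t(x)=u(x-tV)$ is not well posed: $u$ is defined only on an exterior domain $\R^2\setminus B_R$, so $u$ and $u_t$ are defined on different (overlapping) exterior regions, both tend to zero at infinity, and you have no control whatsoever of either function near its inner boundary. There is nothing to anchor the comparison, so the maximum principle does not force $u\equiv u_t$, let alone $u\equiv 0$. Your fallback, ``$u\equiv 0$ on the end, then real-analytic continuation,'' would also require knowing that the \emph{interior} of $\Sigma$ is a graph over $\Pi$, which you have not established.

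The paper's argument avoids this entirely and uses the one-end hypothesis more directly. Along a blow-down subsequence $\lambda_i\Sigma\to\Pi$, the single boundary curve $\Sigma\cap\partial B_{\lambda_i^{-1}m}$ is a graph over $\Pi$ with slope $\le C\epsilon_i^{1/6}$. Since $\Sigma$ has one end, $\Sigma\cap B_{\lambda_i^{-1}m}$ is a topological \emph{disk} with this single graphical boundary curve; the Alexandrov moving-planes argument (reflecting through planes parallel to $\Pi$, which preserves the translator equation because $V\in\Pi$) then shows the \emph{entire disk} is graphical over $\Pi$, not just the annular end. Now the crucial second ingredient: since $V\in\Pi$, each partial derivative $D_k u$ satisfies a linear elliptic equation with no zeroth-order term, hence a weak maximum principle; the boundary gradient bound $C\epsilon_i^{1/6}$ therefore propagates to the interior. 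Letting $i\to\infty$ kills the gradient globally and forces $\Sigma=\Pi$. This is where the one-end hypothesis actually enters: with several ends $\Sigma\cap B_R$ has several boundary components and the moving-planes step fails.

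A secondary issue: in Step~(i) you assert uniqueness of the asymptotic plane as routine. It is not; the paper devotes an entire section to it in the multi-end case, via a Gauss--Bonnet contradiction. For the one-end theorem, however, only a \emph{subsequential} blow-down limit is needed, so this does not damage the argument once Step~(iii) is repaired as above. Your Step~(ii), on the other hand, is a clean alternative to the paper's route through Ecker's $\epsilon$-regularity and pointwise decay of $\mathbf n\cdot V$.
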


We also prove

\begin{thm}\label{graphicalends} 
Let $\Sigma$ be a complete embedded translator with finite total curvature, as in Theorem \ref{oneend}. Outside of some ball $B_R$, the ends of $\Sigma$ may be written as the graphs of functions $u_i$ over a fixed plane $\Pi$. Furthermore, each $u_i$ decays at a rate $o\big(r^{-\frac{1}{4(1+\delta)}}\big)$ for any $\delta >0$ as $r \rightarrow \infty$. In particular, each $u_i$ decays radially at an exponential rate in any sector in $P$ that excludes the ray $-tV$, $t \in (r_0, \infty)$, where $r_0$ is any non-negative number. 
\end{thm}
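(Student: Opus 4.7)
The plan is to exploit finite total curvature to control the asymptotic geometry of $\Sigma$ end by end, and then use the translator PDE to turn that control into quantitative decay of the graphing functions $u_i$. First, I would use $\int_\Sigma |A|^2 < \infty$ together with the standard $\varepsilon$-regularity for the translator equation (in the spirit of Choi--Schoen) to conclude pointwise decay $|A|(p)\to 0$ as $|p|\to\infty$ in $\Sigma$. Because $|V^\perp|=|\vec H|\le |A|$, the vector $V$ becomes tangent to $\Sigma$ at infinity, so each end $E_i$ is $C^\infty$-asymptotic to a plane $\Pi_i$ that must contain $V$, and $E_i$ is in particular a graph over $\Pi_i$ outside some compact set.

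Next, I would show that all the $\Pi_i$ coincide with a single $\Pi$. The family of planes in $\R^3$ containing the line $\R V$ is one dimensional, so $\Pi_i\cap \Pi_j = \R V$ whenever $\Pi_i\ne \Pi_j$; embeddedness together with properness would then compel two ends with distinct asymptotic planes to cross near infinity, which one can rule out using the strong maximum principle for the translator operator (or, alternatively, a Harnack argument for the angle between the unit normal and a fixed direction orthogonal to $V$). Each end is then a graph $y=u_i(x,z)$ over $\Pi = \{y=0\}$ with $V=e_z$, and $u_i\to 0$.

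The translator equation for $u=u_i$ becomes
\[
\divg\!\left(\frac{\nabla u}{W}\right) = \frac{u_z}{W}, \qquad W=\sqrt{1+|\nabla u|^2},
\]
which linearizes to the drift-Laplace equation $\Delta u - u_z = 0$ as $u,\nabla u \to 0$. To extract the polynomial rate $o(r^{-\frac{1}{4(1+\delta)}})$, I would run a weighted Caccioppoli / Moser iteration on geodesic annuli, using $\int_\Sigma |A|^2 < \infty$ to bound the tail of the nonlinear error; the exponent $\tfrac{1}{4(1+\delta)}$ should emerge from balancing the two-dimensional Sobolev embedding with this shrinking energy after a hole-filling step, the $\delta$-loss coming from small slack absorbed at each iteration. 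Finally, in any sector $S$ excluding the wake ray $\{-tV : t>r_0\}$, the drift $-u_z$ is coercive in $r$, and one can construct an exponential barrier $w=Ce^{-\lambda r}$ that is a supersolution of the linearized operator on $S$ for some $\lambda=\lambda(S)>0$; the preliminary polynomial decay supplies boundary data on $\partial S$, and comparison then yields $|u|\le w$ in $S$.

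The main obstacle, in my view, is the second step: promoting ``each end is asymptotic to a plane containing $V$'' to ``all ends are asymptotic to the \emph{same} plane $\Pi$'' cannot be done from pointwise curvature decay alone, and embeddedness must be used globally in the unbounded region where the ends are nearly but not exactly parallel. Getting the iteration sharp enough to yield the precise exponent $\tfrac{1}{4(1+\delta)}$, rather than a qualitatively weaker polynomial rate, will also require careful bookkeeping, because the nonlinear error in the translator PDE is controlled only by the decaying tail of $\int |A|^2$ and not by pointwise quantities at comparable scale.
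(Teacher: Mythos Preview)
Your outline has the right overall architecture but contains two substantive gaps, and in both places the paper's actual argument is quite different from what you propose.

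\textbf{Uniqueness of the asymptotic plane for a single end.} You write that pointwise decay $|A|\to 0$ implies ``each end $E_i$ is $C^\infty$-asymptotic to a plane $\Pi_i$ that must contain $V$, and $E_i$ is in particular a graph over $\Pi_i$.'' This is the real crux, and curvature decay alone does not give it: $|A|\to 0$ only says the tangent plane varies slowly, not that it stabilizes. The normal could in principle rotate slowly about the $V$-axis so that different blow-down subsequences of the \emph{same} end converge to different planes containing $V$. You identify the ``main obstacle'' as showing $\Pi_i=\Pi_j$ for distinct ends, but that step is easy once each $\Pi_i$ is well-defined (embeddedness forces parallel planes, exactly as you say); the hard step is showing $\Pi_i$ exists. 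The paper handles this in two stages: first Simon's graphical decomposition lemma plus the moving-planes method plus a weak maximum principle for $Du$ show that in every dyadic annulus the end is a graph with small gradient over \emph{some} plane through $V$; then a Gauss--Bonnet argument rules out the possibility that the orientation flips between consecutive annuli (an orientation reversal would force $\int |A|^2 \gtrsim 4\pi$ on the interstitial piece). Your proposal has nothing playing the role of either Simon's lemma or the Gauss--Bonnet step.

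\textbf{The decay exponent.} The rate $o(r^{-\frac{1}{4(1+\delta)}})$ is not an artifact of iteration losses; it comes from an explicit barrier. The paper first proves exponential decay in the forward half-space $\{x_1\ge M\}$ by applying a barrier $e^{-x_1/2}$ to $(D_1 u)^2$ (not to $u$), which after integration gives uniform boundedness of $u$. Only then can one compare $u^2$ with the barrier
\[
\phi = r^{1/4}\,e^{-x_1/2}\,K_0(r/2),
\]
where $K_0$ is the modified Bessel function; one checks $\mathcal L\phi<0$ for $\mathcal L=\Delta+D_1$ and controls the nonlinear remainder. The asymptotics $K_0(r/2)\sim C r^{-1/2}e^{-r/2}$ are what produce the exponent $\tfrac14$, and the $\delta$-loss comes from replacing $r^{1/4}$ by $r^{2/\alpha}$ with $\alpha>4$. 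Your Caccioppoli/Moser scheme, driven by the tail of $\int|A|^2$, would at best yield \emph{some} polynomial rate; there is no mechanism in that argument that singles out $\tfrac14$. Note also that your order of operations is inverted: the exponential sector decay is an input to the global barrier argument (it supplies boundary data on three sides of a large square), not a consequence of it.
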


These results can be seen as addressing translator versions of the analogous classical questions for minimal surfaces. We now briefly summarize the proof. 

The main tools used to prove Theorem \ref{oneend} are a strong maximum principle for translators, a weak maximum principle for the gradient of a graphical translator, and a lemma due to Leon Simon in \cite{S} which gives an approximate graphical decomposition of surfaces with small total curvature. We begin by cutting out a large ``high curvature" region of $\Sigma$, leaving only disjoint annular ends with small total curvature. We choose one of these ends, and paste in a disk with small total curvature bounded by the total curvature of the annular end. We then apply Simon's lemma to this new disk of small total curvature $\epsilon^2$, which is a translator outside of a small fixed radius. The lemma tells us that, away from some ``pimples" of small diameter, the surface is ``mostly" a graph with gradient bounded by $C\epsilon^{1/6}$ over some plane. Away from a neighborhood of the center, we use the strong maximum principle and Schoen's method of moving planes to show that the pimples must be graphs. Then, using the weak maximum principle for gradients, we show that these graphical pimples must also have small gradient bounded by $C\epsilon^{1/6}$. Thus, inside any annulus $B_\eta \setminus B_{C\eta\epsilon^{1/2}}$, any end can be written as a graph over a plane with gradient bounded by $C\epsilon^{1/6}$. If we fix an annulus $B_R \setminus B_{R^{-1}}$ we can take a blow-down subsequence $\lambda_i\Sigma$ that can be written in $B_R \setminus B_{R^{-1}}$ as a sequence of graphs over a single plane $\Pi$ with gradient bounded by $C\epsilon_i^{1/6}$ where $\epsilon_i \rightarrow 0$. Then, if we assume we have only one end, we can re-apply the moving planes method and the maximum principle to the blow-down sequence to extract Theorem \ref{oneend} as a consequence.

Note that the application of Simon's lemma requires that the density of $\Sigma$, the quantity $r^{-2}\mathcal{H}^2(B_r(x)\cap \Sigma)$, is uniformly bounded when taken over all centers $x$ in $\R^3$ and radii $r \in \R_{> 0}$. In Corollary \ref{uniformarearatios}, we prove the somewhat stronger result that this is true for all surfaces with finite total curvature.

In order to prove Theorem \ref{graphicalends}, we must prove that if we take two points in a convergent blow-down sequence, $\lambda_i > \lambda_{i+1}$, then the ``in between" annular region $\Sigma \cap (B_{R^{-1}\lambda^{-1}_{i+1}} \setminus B_{R\lambda^{-1}_i})$ can be written as a graph over the blow-down limit plane $\Pi$. This is shown by examining two cases: (1) that $\Sigma \cap (B_{R\lambda^{-1}_{i+1}} \setminus B_{R^{-1}\lambda_{i+1}^{-1}})$ has the same orientation as a graph over $\Pi$ as $\Sigma \cap (B_{R\lambda^{-1}_{i}} \setminus B_{R^{-1}\lambda_{i}^{-1}})$, and (2) that these two annular regions have different orientations as graphs over $\Pi$. In the first case, geometric considerations allow us to apply Schoen's method of moving planes and the strong maximum principle to obtain graphicality over $\Pi$. In the second case, we integrate the geodesic curvature over the boundary of $\Sigma \cap (B_{R^{-1}\lambda^{-1}_{i+1}} \setminus B_{R\lambda_i^{-1}})$ and use the Gauss-Bonnet theorem to show that the surface must have total curvature uniformly bounded below by a constant. This contradicts the fact that the end has arbitrarily small total curvature outside a large radius of our choice. Then, application of the maximum principle and moving along the blow-down sequence allows us to conclude that each end may be written as a graph with sublinear growth over a single plane.

In the final section, we prove the strong asymptotics of the ends $\Sigma_i = \textrm{graph}(u_i)$, $i=1, \ldots, M$. Our main tool is the maximum principle: first we use an exponential barrier to bound the growth of the functions $u_i$ in the upper half-plane as $x_1 \rightarrow \infty$. In particular, this decay implies the uniform boundedness of the $u_i$ on the entire domain. This in turn allows us to obtain a radially decaying barrier based on the modified Bessel function of the second type. This gives the stated asymptotics in Theorem \ref{graphicalends} and completes the proof.

\section{Preliminaries}

First, we prove a strong maximum principle for translators. 

\begin{lem}\label{lineardiff}
Let $V = (v_1, v_2,v_3)$ be a unit vector in the direction of motion of the translating graphs of $u_1$ and $u_2$, which are defined on an open set $\Omega \subset \R^2$. Then, $v = u_2-u_1$ satisfies an equation of the form 
\[\divg (a_{ij} D_jv) + b_{i} D_iv = 0
\]
where $a_{ij}$ has ellipticity constants $\Lambda$ and $\lambda$ depending only on the upper bounds for the gradients $|D u_i|$.
\end{lem}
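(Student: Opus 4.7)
The plan is to linearize the translator equation around an interpolation between $u_1$ and $u_2$ and read off the coefficients.

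First, I would write the translator equation for a graph. If $\Sigma = \operatorname{graph}(u)$ has upward unit normal $\nu = (-Du,1)/W$ with $W = \sqrt{1+|Du|^2}$, then $\vec H = V^\perp$ becomes the scalar equation
\[
\operatorname{div}\!\left(\frac{Du}{W}\right) \;=\; \langle V,\nu\rangle \;=\; \frac{-v_1 D_1 u - v_2 D_2 u + v_3}{W}.
\]
This holds for both $u_1$ and $u_2$. To get a linear equation for $v = u_2 - u_1$, I would interpolate: set $u_t = u_1 + tv$ for $t \in [0,1]$, with $W_t = \sqrt{1+|Du_t|^2}$ and $\nu_t = (-Du_t,1)/W_t$, and subtract the two translator equations to obtain
\[
\int_0^1 \frac{d}{dt}\operatorname{div}\!\left(\frac{Du_t}{W_t}\right) dt \;=\; \int_0^1 \frac{d}{dt}\langle V,\nu_t\rangle\, dt.
\]

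Next I would compute both integrands. Differentiating the left-hand side gives the standard linearization of the minimal surface operator,
\[
\frac{d}{dt}\operatorname{div}\!\left(\frac{Du_t}{W_t}\right) \;=\; \operatorname{div}\!\left(\left(\frac{\delta_{ij}}{W_t} - \frac{D_i u_t\, D_j u_t}{W_t^3}\right) D_j v\right),
\]
so integrating in $t$ produces a divergence-form term $\operatorname{div}(a_{ij} D_j v)$ with
\[
a_{ij}(x) \;=\; \int_0^1 \left(\frac{\delta_{ij}}{W_t} - \frac{D_i u_t\, D_j u_t}{W_t^3}\right) dt.
\]
The matrix inside the integral has eigenvalues $1/W_t$ (transverse to $Du_t$) and $1/W_t^3$ (along $Du_t$), so if $|Du_i| \le K$ then $a_{ij}$ is uniformly elliptic with $\lambda \ge (1+K^2)^{-3/2}$ and $\Lambda \le 1$, depending only on $K$. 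Differentiating the right-hand side in $t$, the derivative of $\nu_t$ is linear in $Dv$, so $\frac{d}{dt}\langle V,\nu_t\rangle$ takes the form $c_i(x,t)\, D_i v$ with $c_i$ smooth in $Du_t$ and bounded by a constant depending only on $|V|=1$ and on $K$. Integrating in $t$ yields a first-order term $b_i(x) D_i v$ with $|b_i| \le C(K)$.

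Combining the two sides gives $\operatorname{div}(a_{ij} D_j v) - b_i D_i v = 0$, which is exactly the stated form (after relabeling $b_i$). The ellipticity constants and the bound on the drift coefficients depend only on an upper bound for $|Du_1|$ and $|Du_2|$, as claimed. There is no real obstacle here; the only thing to be careful about is that the right-hand side of the translator equation is a zeroth-order function of $(u,Du)$ whose dependence on $u$ is trivial (it depends only on $Du$), so the difference produces only first-order and not zeroth-order terms in $v$ — which is exactly why no $c\,v$ term appears in the final equation.
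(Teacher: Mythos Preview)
Your proposal is correct and follows essentially the same approach as the paper: both write the graphical translator equation, interpolate $u_t = u_1 + t(u_2-u_1)$, apply the fundamental theorem of calculus to the difference, and read off $a_{ij} = \int_0^1 D_{p_j}A^i(Du_t)\,dt$ and $b_i = \int_0^1 D_{p_i}B(Du_t)\,dt$ (citing \cite[Theorem 10.7]{GT} and \cite[Lemma 1.26]{CM}). Your ellipticity argument via the eigenvalues $1/W_t$ and $1/W_t^3$ is equivalent to the paper's direct computation of the quadratic form $\langle\nu,DA(p)\nu\rangle \ge (1+|p|^2)^{-3/2}$, and your closing remark about the absence of a zeroth-order term is a nice addition that the paper leaves implicit.
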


\begin{proof}
The $u_\alpha$, $\alpha =1,2$, both satisfy the quasilinear elliptic equation
\begin{equation}\label{translatoreqn} \divg \bigg(\frac{Du_\alpha}{(1 +|Du_\alpha|^2)^{1/2}} \bigg) = \frac{-v_1D_1 u_\alpha - v_2 D_2 u_\alpha + v_3}{(1 +|Du_\alpha|^2)^{1/2}}.
\end{equation}
Set 
\[A(p) = \frac{p}{(1 +|p|^2)^{1/2}}\;\;\; B(p) = \frac{V\cdot (-p, 1) }{(1 +|p|^2)^{1/2}}.
\]
We follow the proofs of Gilbarg and Trudinger in \cite[Theorem 10.7]{GT} and Colding and Minicozzi in \cite[Lemma 1.26]{CM}. Let $v = u_2- u_1$, and $u_t = tu_2 + (1-t)u_1$. Let 
\[ a_{ij}(x) = \int_0^1 D_{p_j}A^i(Du_t) dt \;\; \text{ and }b_i (x) = \int_0^1 D_{p_i}B(Du_t) dt. 
\]
By the Fundamental Theorem of calculus and the chain rule, $v$ satisfies
\[ \divg(a_{ij}D_j v) + b_i D_i v = 0.
\]
All that remains to show is the uniform ellipticity of $a_{ij}$. We show this by demonstrating the positive definiteness of the matrix differential $DA$. If $\nu$ is a unit vector and $p \in \R^2$, then
\[ DA(p)\nu = \frac{\nu}{(1+ |p|^2)^{1/2}} - \frac{\langle p, \nu \rangle}{(1+ |p|^2)^{3/2}}p
\]
\begin{align*}(1+ |p|^2)^{3/2} \langle \nu, DA(p) \nu \rangle &= (1+ |p|^2) - \langle p, \nu \rangle^2 \\
&\ge (1+|p|^2) - |p|^2 = 1.
\end{align*}
Thus, $a_{ij} = \int_0^1 DA(u_t)dt $ is a positive definite matrix whose ellipticity constants depend on the upper bounds of $|Du_\alpha|$. 
\end{proof}

\begin{cor}\label{SMP}
Let $\Omega \subset \R^2$ be an open connected neighborhood of the origin. If $u_1, u_2 :\Omega \rightarrow \R$ satisfy the translator equation \eqref{translatoreqn} with respect to the direction $V = (v_1, v_2, v_3)$ with $u_1 \le u_2$ and $u_1(0) = u_2(0)$, then $u_1 \equiv u_2$.
\end{cor}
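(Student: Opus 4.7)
The plan is to reduce the corollary directly to the classical strong maximum principle for linear uniformly elliptic equations, via the linearization provided by Lemma \ref{lineardiff}.

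First, I would set $v = u_2 - u_1$, which is non-negative on $\Omega$ and vanishes at the origin, giving an interior minimum. Applying Lemma \ref{lineardiff} on any relatively compact subdomain $\Omega' \Subset \Omega$ containing the origin, I obtain the linear equation
\[
\divg(a_{ij} D_j v) + b_i D_i v = 0 \quad \text{in } \Omega'.
\]
Since $u_1, u_2$ are smooth solutions of the quasilinear translator equation, $|Du_\alpha|$ is bounded on $\overline{\Omega'}$, so the ellipticity constants produced by the lemma are uniform on $\Omega'$, and the drift coefficients $b_i$ (which are smooth functions of $Du_t$) are likewise bounded. Thus the operator acting on $v$ is uniformly elliptic with bounded measurable coefficients on $\Omega'$.

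Now the classical strong maximum principle for linear elliptic divergence-form operators (e.g.\ Gilbarg--Trudinger, Theorem 8.19) applies: a non-negative supersolution (equivalently, solution) that attains its infimum at an interior point must be constant on the connected component of $\Omega'$ containing that point. Since $v(0) = 0 = \inf_{\Omega'} v$ is attained at the origin, we conclude $v \equiv 0$ on the connected component of $\Omega'$ containing $0$.

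Finally, to propagate this from $\Omega'$ to all of $\Omega$, I would use a standard connectedness argument: the set $\{x \in \Omega : v(x) = 0\}$ is closed in $\Omega$ by continuity, and the local argument above (centered at any point of this set rather than just the origin) shows it is also open. Since $\Omega$ is connected and the set is non-empty, it equals $\Omega$, so $u_1 \equiv u_2$. There is no real obstacle here; the main point worth flagging is that uniform ellipticity only holds locally because the constants in Lemma \ref{lineardiff} depend on $\sup |Du_\alpha|$, which forces the two-step structure of first applying the maximum principle on compact subdomains and then extending by connectedness.
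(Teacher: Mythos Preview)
Your proof is correct and follows essentially the same approach as the paper, which simply says the result is ``immediate from Lemma \ref{lineardiff} and the strong maximum principle proved in \cite[Lemma 3.5]{GT}.'' Your version is in fact more careful: you explicitly address the issue that the ellipticity constants from Lemma \ref{lineardiff} depend on $\sup|Du_\alpha|$ and hence are only uniform on compact subdomains, which the paper leaves implicit; you also cite the divergence-form strong maximum principle (GT Theorem 8.19) rather than the non-divergence version (GT 3.5), which matches the form of the equation in Lemma \ref{lineardiff} more directly, though either works here since the coefficients are smooth.
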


\begin{proof}
Immediate from Lemma \ref{lineardiff} and the strong maximum principle proved in \cite[Lemma 3.5]{GT}. 
\end{proof}

In addition to this maximum principle, we also obtain a weak maximum principle for the gradients of translators that are graphs over planes containing $V$, the direction of motion.

\begin{lem}\label{WMP}
If $V \in \R^2 \times \{0\} \subset \R^3$, $\Omega$ is a bounded domain in $\R^2 \times \{0\}$, and $u: \Omega \subset \R^2 \rightarrow \R$ satisfies \eqref{translatoreqn}, then the partial derivatives $D_1 u$ and $D_2 u$ achieve their maxima and minima on the boundary.
\end{lem}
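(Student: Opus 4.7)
The plan is to show that each partial derivative $w := D_k u$ (for $k \in \{1,2\}$) satisfies a linear, uniformly elliptic equation with \emph{no} zeroth-order term, after which the classical weak maximum principle gives the conclusion immediately.

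First I would rewrite \eqref{translatoreqn} in non-divergence form. Multiplying through by $W := \sqrt{1 + |Du|^2}$ and expanding $W\, \divg(Du/W)$ in the standard way yields
\[
a^{ij}(Du)\, D_{ij} u \;+\; v_1\, D_1 u + v_2\, D_2 u - v_3 \;=\; 0,
\]
where $a^{ij}(p) = \delta_{ij} - \frac{p_i p_j}{1 + |p|^2}$ is exactly the symbol whose positive definiteness (with ellipticity constants depending on $|Du|$) was verified inside the proof of Lemma \ref{lineardiff}. Since $V \in \R^2 \times \{0\}$ we have $v_3 = 0$, but in any case $v_3$ is a constant.

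Differentiating this equation with respect to $x_k$ via the chain rule, and setting $w = D_k u$, produces
\[
a^{ij}(Du)\, D_{ij} w \;+\; \bigl[(\partial_{p_l} a^{ij})(Du)\, D_{ij} u \;+\; v_l\bigr]\, D_l w \;=\; 0,
\]
with summation over $l \in \{1,2\}$. The critical observation is that no term proportional to $w$ itself appears: all fixed constants coming from $V$ (including $v_3$) are annihilated by $\partial_{x_k}$, and the PDE depends on $u$ only through $Du$ and $D^2 u$. Interior regularity for the quasilinear elliptic translator equation gives local bounds on $|Du|$ and $|D^2 u|$, so the coefficients are uniformly elliptic and the lower-order coefficient $b^l := (\partial_{p_l} a^{ij})(Du)\, D_{ij} u + v_l$ is bounded. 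The classical weak maximum principle (\cite[Thm.~3.1]{GT}) applied to $L := a^{ij}(Du)\, D_{ij} + b^l D_l$ with $c \equiv 0$ then yields $\sup_\Omega w = \sup_{\partial\Omega} w$ and $\inf_\Omega w = \inf_{\partial \Omega} w$.

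I expect the only real obstacle to be a mild regularity issue: making sure the coefficient bounds hold up to $\partial \Omega$ so that the maximum principle applies in its standard form, which amounts to the working assumption $u \in C^2(\overline \Omega)$. A completely equivalent (and arguably cleaner) route that leans directly on what is already proved: apply Lemma \ref{lineardiff} to $u$ and its small horizontal translate $u(\cdot + h e_k)$, both of which satisfy \eqref{translatoreqn} with the \emph{same} $V$, divide the resulting equation by $h$, and pass $h \to 0$. This produces the same divergence-form linear equation for $D_k u$ with no zeroth-order term and directly invokes \cite[Thm.~8.1]{GT}.
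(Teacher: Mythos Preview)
Your approach is correct and essentially identical to the paper's: both differentiate the non-divergence form of the translator equation to obtain a linear second-order equation for $D_k u$ with no zeroth-order term, then invoke \cite[Theorem~3.1]{GT}. The only cosmetic difference is that the paper multiplies through by $1+|Du|^2$ before differentiating (obtaining $a_{jk} = (1+|Du|^2)\delta_{jk} - D_j u\, D_k u$ rather than your $a^{ij} = \delta_{ij} - D_i u\, D_j u/(1+|Du|^2)$) and verifies ellipticity directly rather than by back-reference to Lemma~\ref{lineardiff}; your alternative route via translates and Lemma~\ref{lineardiff} is a nice variant not in the paper.
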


\begin{proof}
We set $V = (1, 0, 0)$ without loss of generality. The equation \eqref{translatoreqn} becomes
\begin{equation}\label{partranslatoreqn}\divg \bigg(\frac{Du}{(1 +|Du|^2)^{1/2}} \bigg) = \frac{-D_1 u}{(1 +|Du|^2)^{1/2}}.
\end{equation}
We differentiate this equation with respect to $x_i$ to obtain
\[D_i \divg \bigg( \frac{Du}{(1 +|Du|^2)^{1/2}} \bigg) = \frac{-D_1 (D_i u)}{(1 +|Du|^2)^{1/2}} + \frac{D_1 u}{(1 +|Du|^2)^{3/2}} Du \cdot D (D_i u) 
\]
\[D_i \bigg(\frac{\Delta u}{(1+|Du|^2)^{1/2}} - \frac{D_j u D_k u D_{kj}u}{(1+ |Du|^2)^{3/2}} \bigg)= \frac{-(D_1 (D_i u))}{(1 +|Du|^2)^{1/2}} + \frac{D_1 u (Du \cdot D (D_i u))}{(1+ |Du|^2)^{3/2}}
\]
Let $v = D_i u$. We calculate 
\begin{multline*}(1+|Du|^2)\Delta v - D_j u D_k u D_{kj} v \\ 
= 2 D_j v D_k u D_{jk} u - (D_1 v)(1 +|Du|^2) + D_1 u (Du \cdot D v)\\
+ \Delta u (Dv \cdot Du) - \frac{3 D_j u D_k u D_{kj} u(Du \cdot Dv)}{1+|Du|^2}
\end{multline*} 
This implies that $v$ satisfies a differential equation of the form $Lv = 0$, where the differential operator $L = a_{jk}D_{jk} + b_j D_j$ has coefficients given by
\[a_{jk} = (1 + |Du|^2)\delta_{jk} - D_j u D_k u 
\]
\[b_j =(1+|Du|^2)\delta_{1j} - 2D_kuD_{jk}u - \bigg(D_1u + \Delta u  - \frac{3D_kuD_l u D_{kl} u}{1+|Du|^2}\bigg)D_ju 
\]
We calculate
\[a_{jk}\xi_j \xi_k = (1+|Du|^2)|\xi|^2 - (Du \cdot \xi)^2 \ge  (1+|Du|^2)|\xi|^2 - |Du|^2|\xi|^2 \ge |\xi|^2.
\]
Similarly
\[a_{jk}\xi_j \xi_k \le (1+2|Du|^2)|\xi|^2.
\]
Since $u$ is assumed to be smooth and $\Omega$ is bounded, the operator $L$ is uniformly elliptic. In particular, $L$ satisfies the hypotheses of \cite[Theorem 3.1]{GT} and by applying this theorem, we obtain the weak maximum/minimum principle for $v$.
\end{proof}

We now prove a result about the pointwise decay of the second fundamental form $|A|$ and its derivatives. To obtain this result, we use Ecker's $\epsilon$-Regularity theorem for the mean curvature flow of surfaces.

\begin{thm}\label{eckregthm}\cite[Ecker Regularity Theorem]{E1} There exist constants $\epsilon_0 > 0$ and $c_0 >0$ such that for any solution $(M_t)_{t \in [0,T)}$ of mean curvature flow, any $x_0 \in \R^3$, and $\rho \in (0, \sqrt{T})$ the inequality
\begin{equation}\label{energybound}
\sup_{[T-\rho^2, T)} \int_{M_t \cap B_\rho (x_0)} |A|^2 \le \epsilon_0
\end{equation}
implies the mean value estimate
\begin{equation}\label{meanvalest} 
\sup_{[T - (\rho/2)^2, T)} \sup_{M_t \cap B_{\rho/2}(x_0)} |A|^2 \le c_0 \rho^{-4} \int_{T- \rho^2}^T \int_{M_t \cap B_\rho(x_0)} |A|^2.
\end{equation}
\end{thm}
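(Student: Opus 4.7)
The strategy is a standard parabolic $\varepsilon$-regularity / Moser iteration argument applied to the evolution of $|A|^2$ along the flow, with the smallness of the integrated curvature used to absorb the ``bad'' quartic term. Concretely, the plan is to treat $f := |A|^2$ as a subsolution of the heat operator on the moving surfaces and bootstrap the $L^2$ bound \eqref{energybound} up to the pointwise bound \eqref{meanvalest}.

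\textbf{Step 1: evolution equation.} First I would invoke Simons' identity on a hypersurface flowing by MCF, which yields
\[
(\partial_t - \Delta_{M_t})|A|^2 \;\le\; -2|\nabla A|^2 + 2|A|^4.
\]
Here $\Delta_{M_t}$ is the intrinsic Laplacian on $M_t$ and all integrals below are with respect to the induced measure on $M_t$, which itself evolves with an extra $-|\vec H|^2$ factor under the flow.

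\textbf{Step 2: localized energy inequality.} Next I would multiply the evolution inequality by a test function of the form $\varphi^2 |A|^{2(p-1)}$ for $p \ge 1$, where $\varphi(x,t)$ is a smooth space-time cutoff supported in the parabolic cylinder $B_\rho(x_0) \times (T-\rho^2, T)$ and equal to $1$ on a smaller cylinder. Integration by parts on $M_t$ and the standard parabolic cutoff manipulation (using $|\nabla|A|| \le |\nabla A|$) produces a Caccioppoli-type inequality of the form
\[
\sup_{t} \int_{M_t} \varphi^2 |A|^{2p} + \int\!\!\int |\nabla(\varphi |A|^p)|^2
\;\le\; C(p)\int\!\!\int (|\nabla\varphi|^2 + |\partial_t\varphi|)|A|^{2p} + C(p)\int\!\!\int \varphi^2 |A|^{2p+2}.
\]

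\textbf{Step 3: absorbing the critical term via Michael--Simon.} The delicate point is the right-hand integrand $|A|^{2p+2}$, which is critical-scale: it cannot be dominated by the gradient term without the smallness hypothesis. I would apply the Michael--Simon Sobolev inequality on $M_t$,
\[
\Bigl(\int_{M_t} g^{2}\Bigr)^{1/2} \le C\int_{M_t}\bigl(|\nabla g| + |\vec H|\,g\bigr),
\]
with $g = \varphi^2 |A|^{2p}$, combined with H\"older's inequality and the hypothesis $\int_{M_t \cap B_\rho} |A|^2 \le \varepsilon_0$. Choosing $\varepsilon_0$ sufficiently small lets the quartic term be absorbed into the left-hand side, giving the reverse-H\"older improvement on shrinking parabolic cylinders that powers the iteration.

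\textbf{Step 4: Moser iteration and conclusion.} Iterating the Caccioppoli inequality over a sequence of nested parabolic cylinders shrinking from radius $\rho$ to $\rho/2$ (with $p = p_k \to \infty$) upgrades $L^2$ control on $|A|^2$ to $L^\infty$ control, yielding \eqref{meanvalest} with an explicit dependence $\rho^{-4}$ coming from the parabolic scaling $(x,t) \mapsto (\rho^{-1}x, \rho^{-2}t)$. The hardest and most technical step is Step 3, where the Michael--Simon inequality must be applied carefully (the mean curvature contribution $|\vec H|$ enters and must itself be controlled by $|A|$), and where the precise smallness threshold $\varepsilon_0$ gets fixed; everything else is bookkeeping on the cutoffs and iteration exponents.
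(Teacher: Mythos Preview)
The paper does not prove this statement: Theorem~\ref{eckregthm} is quoted from Ecker's paper \cite{E1} with a citation and no argument, and is then used as a black box in Lemma~\ref{curvdecay}. So there is no ``paper's own proof'' to compare against.

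That said, your outline is essentially the correct sketch of Ecker's original argument: the Simons-type evolution inequality $(\partial_t - \Delta)|A|^2 \le -2|\nabla A|^2 + c|A|^4$, a localized $L^p$ energy estimate, absorption of the critical $|A|^{2p+2}$ term via the Michael--Simon inequality together with the smallness hypothesis, and then parabolic Moser iteration on shrinking cylinders. One minor caution: in Step~3 the Michael--Simon inequality you wrote is the $L^1 \to L^2$ form for surfaces; to make the absorption clean at every iteration step one typically uses the multiplicative $L^2$ Sobolev inequality $\int g^2 \le C\bigl(\int |\nabla g| + |H|g\bigr)^2$ together with H\"older against the small $\int |A|^2$, and the mean-curvature term is handled by $|H|^2 \le 2|A|^2$. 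With that adjustment your plan goes through and matches Ecker's proof in \cite{E1}.
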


\begin{lem}\label{curvdecay}
For a translator $\Sigma$ with finite total curvature, if $\varrho>1$ is distance from the origin, then
\[ |A| = O(\varrho^{-1/2}).
\]
Furthermore, in the positive half-space defined by $V$, 
\[|A| = O(\varrho^{-1}).\]
\end{lem}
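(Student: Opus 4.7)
The plan is to apply Ecker's $\epsilon$-regularity theorem (Theorem~\ref{eckregthm}) to the mean curvature flow $M_t = \Sigma + tV$, which is a genuine solution of MCF precisely because $\Sigma$ is a translator in the direction $V$. Since translation is an isometry and preserves the second fundamental form,
\[
\int_{M_t \cap B_\rho(x_0)} |A|^2 \;=\; \int_{\Sigma \cap B_\rho(x_0 - tV)} |A|^2,
\]
so Ecker's energy hypothesis on a parabolic cylinder about $(x_0, 0)$ reduces to a hypothesis on a family of Euclidean balls of radius $\rho$ whose centers travel a distance $\rho^2$ in the $+V$ direction from $x_0$. Since $\int_\Sigma |A|^2 < \infty$, the tail integral $\int_{\Sigma \setminus B_r(0)} |A|^2$ tends to $0$ as $r \to \infty$, so whenever I can verify that this family of balls lies in the complement of some large Euclidean ball, Ecker's hypothesis will hold automatically for $\varrho$ large.

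For the general estimate $|A| = O(\varrho^{-1/2})$, I would fix $x_0$ with $|x_0| = \varrho \gg 1$ and take the parabolic scale $\rho = \sqrt{\varrho}/2$, so $\rho^2 = \varrho/4$. For $t \in [-\rho^2, 0)$ the triangle inequality gives $|x_0 - tV| \ge \varrho - \rho^2 = 3\varrho/4$, so the balls $B_\rho(x_0 - tV)$ lie in $\R^3 \setminus B_{\varrho/2}(0)$ once $\varrho$ is large and the energy hypothesis is satisfied uniformly in $t$. The mean value estimate \eqref{meanvalest} then yields
\[
|A|^2(x_0) \;\le\; c_0 \rho^{-4} \int_{-\rho^2}^{0}\!\int_{M_t \cap B_\rho(x_0)} |A|^2 \, dt \;\le\; c_0 \rho^{-2} \epsilon_0 \;=\; O(\varrho^{-1}).
\]

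For the sharper bound in the positive half-space $\{x \cdot V > 0\}$ I would use the much larger scale $\rho = \varrho/2$; this is the step where the translator geometry genuinely matters. For $x_0 \cdot V > 0$ and $t \in [-\rho^2, 0)$ the shift $-tV$ points in the $+V$ direction, and
\[
|x_0 - tV|^2 \;=\; |x_0|^2 + 2|t|(x_0 \cdot V) + |t|^2 \;\ge\; |x_0|^2 = \varrho^2,
\]
so the ball centers never come closer to the origin than $x_0$ itself. Consequently $B_\rho(x_0 - tV)$ still lies in $\R^3 \setminus B_{\varrho/2}(0)$, the energy hypothesis remains valid, and Ecker's theorem now gives $|A|^2(x_0) \le c_0 \rho^{-2}\epsilon_0 = O(\varrho^{-2})$. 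The main obstacle is precisely this half-space step: extracting the full $\varrho^{-1}$ decay requires a parabolic scale proportional to $\varrho$, and taking such a large scale is safe only because forward translation along $V$ from the positive half-space is distance-increasing. In the full space this monotonicity fails---the parabolic neighborhood could in principle pass arbitrarily close to the origin where the curvature of $\Sigma$ is not small---which is what forces the smaller scale $\rho \sim \sqrt{\varrho}$ and the weaker exponent $1/2$ in the general estimate.
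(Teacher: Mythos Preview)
Your proposal is correct and follows essentially the same approach as the paper: both apply Ecker's $\epsilon$-regularity theorem to the translating flow $M_t=\Sigma+tV$, choosing a parabolic scale $\rho\sim\sqrt{\varrho}$ at a general point and $\rho\sim\varrho$ in the positive half-space, the latter justified precisely by the distance-increasing property $|x_0-tV|\ge|x_0|$ for $t\le 0$ and $x_0\cdot V>0$. Your presentation of the translation identity and the half-space monotonicity is in fact somewhat more explicit than the paper's, but the argument is the same.
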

\begin{proof}
This is a consequence of Ecker's $\epsilon$-regularity theorem, Theorem \ref{eckregthm}. By the finite total curvature condition, there is a ball $B_R$ centered at the origin such that $\int_{\Sigma \setminus B_R} |A|^2 <\epsilon_0$. We pick any $x_0 \in \R^3 \setminus B_{2R}$, and take $2\rho^2 = \textrm{dist}_{\R^3}(x_0, B_{2R})$. Suppose that $x_0 \in \Sigma$ and take the ball $B_\rho(x_0)$. Consider the mean curvature flow solution $\Sigma_t = \Sigma + tV$ defined on the interval $t \in [-3\rho^2/4, \rho^2/4]$. Notice that $B_R \cap \Sigma + tV$ never intersects $B_\rho(x_0)$ for any time $t \in [-3\rho^2/4, \rho^2/4]$. Thus,
\[\sup_{[-3\rho^2/4, \rho^2/4 )} \int_{\Sigma_t \cap B_\rho (x_0)} |A|^2 \le \epsilon_0.
\]
Theorem \ref{eckregthm}, Ecker's regularity theorem, yields
\begin{align*}\sup_{[0, \rho^2/4)} \sup_{\Sigma_t \cap B_{\rho/2}(x_0)} |A|^2 & \le c_0 \rho^{-4} \int_{-3 (\rho/2)^2}^{(\rho/2)^2} \int_{\Sigma_t \cap B_\rho(x_0)} |A|^2 \\
& \le c_0 \rho^{-2} \epsilon_0
\end{align*}
Thus, $|A|^2(x_0) \le C\varrho^{-1}$, where $\varrho > 0$ is distance to the origin (which is comparable to $2\rho^2 = \textrm{dist}_{\R^3}(x_0, B_{2R})$). This proves the first statement of the Lemma.

Now we prove the improved curvature decay in the upper half-space $\mathbb{H} \subset \R^3$ of points in $\R^3$ with positive $V$ component. Suppose that $x_0 \in \mathbb{H} \cap \Sigma$ and that $\rho = \textrm{dist}_{\R^3}(x_0, B_{R})$. Consider the ball $B_\rho(x_0)$ and the MCF solution $\Sigma_t = \Sigma + tV$ defined on the interval $t \in [\rho^2, 0]$. Since $x_0\cdot V > 0$, $\Sigma_t \cap B_R$ never intersects $B_\rho(x_0)$. Thus, by Theorem \ref{eckregthm}, 
\[\sup_{[-\rho^2/4, 0]}\sup_{\Sigma_t \cap B_{\rho/2}(x_0)} |A|^2 \le c_0 \rho^{-2}\epsilon_0.
\]
Choosing $t=0$, we see that $|A|^2(x_0) \le C\rho^{-2}$. Since $\rho \approx \varrho$, where $\varrho > 0 $ is $|x_0|$, we conclude that $|A| = O(\varrho^{-1})$.
\end{proof}

\begin{cor}\label{normperpv}
If $\mathbf{n}(p)$ is the normal vector to a translator $\Sigma$ moving in the direction $V$ at the point $p$, then 
\[\mathbf{n}(p) \cdot V = O(\varrho^{-1/2}),
\]
where $\varrho$ is the distance between $p$ and the origin.
\end{cor}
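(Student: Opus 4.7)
The proof should be essentially immediate from the translator equation combined with Lemma \ref{curvdecay}. The plan is as follows.

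First I would unpack the translator equation \eqref{transeq}: writing $\vec H = H\mathbf n$ with $H$ the scalar mean curvature, the identity $\vec H = V^\perp = (V\cdot \mathbf n)\mathbf n$ gives the pointwise relation $H(p) = V\cdot \mathbf n(p)$. Thus the quantity I want to control is literally the scalar mean curvature of $\Sigma$.

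Next I would invoke the elementary algebraic inequality $H^2 \le 2|A|^2$, valid for any $2$-dimensional surface (it follows from $(\kappa_1+\kappa_2)^2 \le 2(\kappa_1^2+\kappa_2^2)$). Combined with the previous step this yields $|\mathbf n(p)\cdot V| = |H(p)| \le \sqrt{2}\,|A(p)|$ pointwise.

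Finally, I apply Lemma \ref{curvdecay}, which provides $|A(p)| = O(\varrho^{-1/2})$ at distance $\varrho = |p|$ from the origin, and conclude $\mathbf n(p)\cdot V = O(\varrho^{-1/2})$ as claimed. I do not foresee any real obstacle; the corollary is genuinely a direct consequence of the translator PDE and the curvature decay just established, so the only thing to be careful about is the sign/orientation convention in identifying $\vec H = H\mathbf n$ with $H = V\cdot \mathbf n$. I would also note in passing that the sharper bound $|A| = O(\varrho^{-1})$ in the positive half-space from Lemma \ref{curvdecay} immediately upgrades the conclusion to $\mathbf n\cdot V = O(\varrho^{-1})$ there, though this stronger statement is not what is being asserted.
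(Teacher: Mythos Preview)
Your proposal is correct and follows exactly the same approach as the paper: use the translator equation to identify $H = \mathbf n \cdot V$, bound $|H|^2 \le 2|A|^2$, and invoke Lemma \ref{curvdecay}. The paper's proof is just these two ingredients stated in a single line.
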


\begin{proof}
\[|H|^2 \le 2|A|^2, \;\; H(p) = \mathbf{n}(p) \cdot V.  
\]
Therefore,
\[\mathbf{n}(p) \cdot V =  O(\varrho^{-1/2}).
\]
\end{proof}

Before we can begin the proof of the main theorems, we must show that the embedding of $\Sigma$ into $\R^3$ is proper and that the area ratios, centered at any point, are uniformly bounded. We state the following theorem of Hartman which bounds the intrinsic area growth.

\begin{thm}\label{hartman}\cite[Proposition 1.3]{LT} Let $\Sigma$ be a complete surface with finite total curvature. Then there exists a constant $C_1>0$ depending only on $\int_\Sigma |K| dA$, such that 
\[ \textrm{Area}(B^{\Sigma}_r(x)) \le C_1 r^2,
\]
for all $x \in \Sigma$ and for all $r > 0$, where $B^\Sigma_r(x)$ is the intrinsic ball of radius $r$.
\end{thm}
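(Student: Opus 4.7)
My plan is to control the length of intrinsic geodesic circles via Gauss--Bonnet and then integrate to obtain the area bound. First I would invoke Huber's theorem: any complete surface with finite total curvature is homeomorphic to a closed Riemann surface with finitely many punctures, so the Euler characteristic of any connected intrinsic ball $B_r^\Sigma(x) \subset \Sigma$ (viewed as a compact $2$-manifold with boundary when that boundary is smooth) is bounded above by $1$, independent of $x$ and $r$.

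Next, fix $x \in \Sigma$ and set $L(r) := \mathcal{H}^1(\partial B_r^\Sigma(x))$ and $A(r) := \mathcal{H}^2(B_r^\Sigma(x))$. By Sard's theorem applied to $d(\,\cdot\,,x)$, for almost every $r > 0$ the boundary $\partial B_r^\Sigma(x)$ is a smooth embedded $1$-manifold. For such regular $r$, the first variation of arclength gives $L'(r) = \int_{\partial B_r^\Sigma(x)} k_g\,ds$, and Gauss--Bonnet applied to $B_r^\Sigma(x)$ yields
\[
L'(r) \;=\; 2\pi\chi(B_r^\Sigma(x)) - \int_{B_r^\Sigma(x)} K\,dA \;\le\; 2\pi + \int_\Sigma |K|\,dA \;=:\; C_0.
\]
Since $L(0) = 0$, integration gives $L(r) \le C_0 r$, and the coarea formula then yields $A(r) = \int_0^r L(s)\,ds \le (C_0/2)\,r^2$, which is the claim with $C_1 = C_0/2$ depending only on $\int_\Sigma |K|\,dA$.

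The main technical obstacle is the cut locus of $x$: for large $r$ the set $\partial B_r^\Sigma(x)$ need not be smooth and $B_r^\Sigma(x)$ need not be an embedded submanifold with smooth boundary, so the Gauss--Bonnet identity above requires justification. I would address this either by mollifying the distance function and applying Gauss--Bonnet with corners to the resulting Lipschitz domains, then passing to a limit along regular values; or by working directly with the Jacobian $J(r,\theta)$ of the exponential map based at $x$. In the latter approach, the Jacobi equation $J_{rr} + K J = 0$ with the initial conditions $J(0,\theta) = 0$ and $J_r(0,\theta) = 1$, integrated against $d\theta$ over $[0,2\pi]$ and cut off at the cut locus in each direction, produces a differential inequality for $\int_0^{2\pi} J(r,\theta)\,d\theta$ that reproduces the bound above and sidesteps topological bookkeeping, at the cost of a somewhat longer calculation.
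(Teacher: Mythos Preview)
The paper does not supply its own proof of this statement: it is quoted verbatim from \cite[Proposition 1.3]{LT} (where it is attributed to Hartman) and used as a black box in the subsequent argument bounding extrinsic area ratios. So there is no proof in the paper to compare against.

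That said, your outline is the classical Hartman argument and is essentially correct. Two minor remarks. First, you do not need Huber's theorem for the bound $\chi(B_r^\Sigma(x)) \le 1$: any connected compact surface with nonempty boundary satisfies $\chi = 2 - 2g - b \le 1$, and completeness plus noncompactness of $\Sigma$ guarantees $\partial B_r^\Sigma(x) \ne \emptyset$. Second, of the two fixes you propose for the cut-locus issue, the Jacobi-field route is the one actually used in the literature (and by Hartman himself): writing $A(r) = \int_0^{2\pi} \int_0^{\min(r,c(\theta))} J(s,\theta)\,ds\,d\theta$ with $J_{rr} + KJ = 0$, $J(0,\theta)=0$, $J_r(0,\theta)=1$, and integrating the Jacobi equation twice gives the bound directly without ever needing $\partial B_r^\Sigma(x)$ to be smooth or invoking $\chi$. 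Your Gauss--Bonnet version is morally the same computation, just packaged topologically, and the mollification step you would need to make it rigorous is more delicate than the exponential-map calculation.
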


We will extend this statement to the analogous statement for extrinsic balls by comparing intrinsic and extrinsic distances. The following theorem due to Huber gives important information about the topology and geometry of $\Sigma$.

\begin{thm}[Huber] If $\Sigma$ is a complete surface of finite total curvature immersed in $\R^n$, then $\Sigma$ is conformally equivalent to a compact Riemann surface with finitely many points deleted. 
\end{thm}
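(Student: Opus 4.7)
The plan is to establish Huber's theorem in two stages: first, show that $\Sigma$ has finite topological type (finite genus and finitely many ends), and second, determine the conformal structure at each end to be that of a punctured disk.

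For the first stage, I would exhaust $\Sigma$ by an increasing sequence of compact domains $\Omega_k$ with smooth boundary and apply the Gauss-Bonnet theorem
\[ 2\pi \chi(\Omega_k) = \int_{\Omega_k} K \, dA + \int_{\partial \Omega_k} \kappa_g \, ds.\]
The finite total curvature hypothesis bounds $\int_{\Omega_k} |K| \, dA$ uniformly in $k$. With a careful choice of exhaustion (for instance, smoothed sublevel sets of the intrinsic distance function to a basepoint, exploiting the quadratic intrinsic area growth provided by Theorem \ref{hartman} to control the length of typical level sets), one also obtains a uniform bound on $\int_{\partial \Omega_k} \kappa_g \, ds$. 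Since adjoining a new handle or a new topological end to $\Omega_k$ drops $\chi(\Omega_k)$ by a definite amount, these uniform bounds force $\Sigma$ to have finite genus and finitely many topological ends.

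In the second stage, after removing a sufficiently large compact piece, each end $E_i$ is topologically an open annulus. Endowing $\Sigma$ with its induced conformal structure and invoking the classification of doubly connected Riemann surfaces, each end is conformally equivalent to either a punctured disk, a round annulus $\{1 < |z| < R\}$, or a punctured plane. The remaining task is to rule out the round-annulus (hyperbolic) case. I would do this via potential theory on the end: writing the metric locally as $ds^2 = e^{2u}|dz|^2$, the Gauss curvature satisfies $\Delta u = -K e^{2u}$, and the finite total curvature hypothesis translates into integrability of $K e^{2u}$ against Euclidean area. Standard estimates for the Poisson equation on an annulus then show that $u$ has only logarithmic growth at the would-be inner boundary, which together with completeness of the Riemannian metric forces the end to be parabolic and hence conformal to a punctured disk.

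The main obstacle is precisely this second-stage potential-theoretic analysis. The difficulty is that finite total curvature is a metric integral condition, whereas conformal type is a property of the underlying Riemann surface, and bridging the two requires a delicate study of the growth of (sub)harmonic functions on each end via the conformal-factor equation for $u$. In practice this analysis is essentially the content of Huber's original paper, and the expectation is that the author will simply cite it rather than reproduce it here.
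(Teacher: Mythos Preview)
Your expectation is correct: the paper does not prove Huber's theorem at all. It is stated as a classical result attributed to Huber and used as a black box; the paper immediately proceeds to exploit its consequences (finitely many ends, bounded genus, conformal parametrization of each end by a punctured plane) in conjunction with the M\"uller--\v{S}ver\'ak results. There is nothing to compare your outline against.

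For what it is worth, your two-stage sketch is the standard shape of the argument, and you correctly identified the nontrivial step as the parabolicity of each end, which is exactly the content of Huber's original analysis.
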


In particular, we note that Huber's theorem implies that $\Sigma$ must have finitely many ends and bounded genus. By Huber's theorem, we have a bijective conformal map, $f: \Omega^* \hookrightarrow \Sigma$, from the annulus $\Omega^* := \C \setminus B_1(0)$ to a given end of $\Sigma$. Following \cite{MS}, we consider the family of rescalings
\[f_\epsilon(z) = \epsilon^{\alpha + 1}[f(z/\epsilon) - f(2/\epsilon)].
\]
According to \cite{MS}, there exists a sequence $\epsilon_k \rightarrow 0$ with the following properties:
\begin{itemize}
    \item[(i)] There exists a plane $L \in \textrm{Gr}(2,3)$ such that the Gauss map converges to the constant $L$ in $W^{1,2}_{loc}$ on the punctured complex plane.
    \item[(ii)] The maps $f_{\epsilon_k}$ converge uniformly on compact subsets of $\C \setminus \{0\}$ to a conformal mapping $f_0: \C \setminus \{0\} \rightarrow L \subset \R^3$ which satisfies $|\partial_{x_i}f_0(z)| = C|z|^\alpha$. 
\end{itemize}
If we consider $f_0$ as a holomorphic function $f_0: \C \setminus \{0\} \rightarrow \C$, it can be seen that $f_0$ is of the form
\[f_0(z) = Cz^{\alpha + 1} + D.
\]
However, if $\alpha > 0$, the graph of the map $f$ must have an self-intersection, since the $f_{\epsilon_k}$ are converging uniformly as maps into $\R^3$. This contradicts the assumption of embeddedness, and thus $\alpha$ must be zero. M\"{u}ller and \v{S}ver\'{a}k define the multiplicity of an end by the integer $m = \alpha + 1$. Therefore, in our situation each end of $\Sigma$ has multiplicity one.

Now we state a result of M\"{u}ller and \v{S}ver\'{a}k that allows us to relate extrinsic distances to distances in the parameter space $\Omega^*$. Given that each end has unit multiplicity, we simplify the statement of the theorem.

\begin{prop}\cite[Corollary 4.2.10]{MS}\label{distcomp} Given an end of $\Sigma$, consider the conformal parametrization $f: \Omega^* \rightarrow \Sigma \hookrightarrow \R^3$ given by Huber's theorem. For each $\epsilon >0$ there exists $R>0$ such that the following statement holds. If $Q \subset \Omega^*$ is a square such that $Q \cap \{z \in \C \;:\; |z| \le R\} = \emptyset$ and if $\xi_1, \xi_2 \in \Omega^*$ are neighboring vertices of $Q$, then 
\[e^\lambda(1-\epsilon)|\xi_1 - \xi_2| \le |f(\xi_1) - f(\xi_2)| \le e^\lambda(1+\epsilon)|\xi_1 - \xi_2|,
\]
where $\lambda$ is a constant.
\end{prop}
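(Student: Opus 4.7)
The plan is to upgrade the $C^0_{\mathrm{loc}}$-convergence of the rescaled parametrizations $f_{\epsilon_k}\to f_0(z)=Cz+D$ on $\C\setminus\{0\}$ to uniform first-order control of $f$ at infinity, and then integrate along the straight segment from $\xi_1$ to $\xi_2$. Set $M_0:=Df_0$, a constant $3\times 2$ matrix with orthogonal columns of common length $|C|$, and put $e^\lambda:=|C|$, so that $|M_0 v|=e^\lambda|v|$ for every $v\in\R^2$.

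The first step is to show that $\|Df(w)-M_0\|\to 0$ as $|w|\to\infty$. Since $f$ is a conformal parametrization, $|\partial_x f|=|\partial_y f|=e^{\omega(z)}$ and $\partial_x f\cdot\partial_y f=0$; conformality turns the translator equation into the semilinear elliptic system $\Delta f=2e^{2\omega}\vec{H}$, and Lemma \ref{curvdecay} bounds $|\vec{H}|=O(\varrho^{-1/2})$. Combining standard interior elliptic regularity for this system with the $W^{1,2}_{\mathrm{loc}}$-convergence of the Gauss map to the constant plane $L$ (item (i) above) upgrades the $C^0_{\mathrm{loc}}$-convergence $f_{\epsilon_k}\to f_0$ to $C^{1,\alpha}_{\mathrm{loc}}$-convergence on $\C\setminus\{0\}$. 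Since $Df_\epsilon(z)=Df(z/\epsilon)$, this translates into the uniform statement: for every $\delta>0$ there exists $R(\delta)>0$ with $\|Df(w)-M_0\|<\delta$ whenever $|w|\ge R(\delta)$.

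Given this first-order expansion, fix $\epsilon>0$, set $\delta:=\epsilon e^\lambda$, and let $R:=R(\delta)$. If $Q$ is a square disjoint from $\{|z|\le R\}$ with neighboring vertices $\xi_1,\xi_2$, then the segment $\gamma(t)=(1-t)\xi_1+t\xi_2$ is a side of $Q$ and so lies entirely in $\{|w|>R\}$. Writing
\[
f(\xi_2)-f(\xi_1)=M_0(\xi_2-\xi_1)+\int_0^1\bigl[Df(\gamma(t))-M_0\bigr](\xi_2-\xi_1)\,dt,
\]
the triangle inequality together with $|M_0(\xi_2-\xi_1)|=e^\lambda|\xi_2-\xi_1|$ and the bound from the previous step yields
\[
(e^\lambda-\delta)|\xi_2-\xi_1|\le |f(\xi_2)-f(\xi_1)|\le (e^\lambda+\delta)|\xi_2-\xi_1|,
\]
which, since $\delta=\epsilon e^\lambda$, is precisely the claimed two-sided comparison.

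The principal obstacle is the $C^{1,\alpha}$-upgrade in the first step: $C^0$-convergence of $f_{\epsilon_k}$ to a linear map does not, by itself, control $Df$ pointwise, and one must rule out rapid oscillation of the tangent frame of $f$ at large distances. The leverage is that $Df$ is determined by its magnitude $e^\omega$ and a conformal choice of tangent frame; the $W^{1,2}$-convergence of the Gauss map to a constant plane, combined with the Codazzi equations and the decay of $|\vec{H}|$ from Lemma \ref{curvdecay}, forces this frame to stabilize. The bookkeeping is delicate, and in practice one invokes the M\"{u}ller--\v{S}ver\'{a}k structure theorem for conformal parametrizations of ends of finite total curvature rather than redoing their analysis from scratch.
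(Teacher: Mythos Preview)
The paper does not prove this proposition; it is quoted verbatim from M\"{u}ller--\v{S}ver\'{a}k \cite[Corollary 4.2.10]{MS} and used as a black box. So there is no ``paper's own proof'' to compare against, and your attempt is effectively a sketch of (part of) the M\"{u}ller--\v{S}ver\'{a}k argument.

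That said, your sketch has a genuine gap. The convergence $f_{\epsilon_k}\to f_0$ stated in the paper (items (i) and (ii) preceding the proposition) is only along a particular sequence $\epsilon_k\to 0$. Even if you succeed in upgrading it to $C^{1,\alpha}_{\mathrm{loc}}$, you obtain $Df(z/\epsilon_k)\to M_0$ uniformly for $z$ in compact subsets of $\C\setminus\{0\}$; this controls $Df(w)$ only for $w$ in a union of dyadic-type annuli $\epsilon_k^{-1}K$, not for all large $|w|$. Your sentence ``this translates into the uniform statement: for every $\delta>0$ there exists $R(\delta)>0$ with $\|Df(w)-M_0\|<\delta$ whenever $|w|\ge R(\delta)$'' is precisely the passage from subsequential to full convergence, and it is unjustified. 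This is not a technicality: the analogous issue for the blow-down of $\Sigma$ itself is the entire content of Section~4 of the paper. In M\"{u}ller--\v{S}ver\'{a}k the full limit comes from their direct analysis of the conformal factor (they show $\omega(z)-\alpha\log|z|\to\lambda$ as $|z|\to\infty$, with $\alpha=0$ here), not from compactness of rescalings.

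A secondary issue: your elliptic-regularity bootstrap is circular as written. The equation $\Delta f=2e^{2\omega}\vec H$ has a right-hand side involving $e^{2\omega}=|\partial_x f|^2$, so interior $C^{1,\alpha}$ estimates for $f$ presuppose control of $|Df|$, which is what you are trying to establish. Invoking Lemma~\ref{curvdecay} also makes the argument translator-specific, whereas the proposition is a general finite-total-curvature statement. Your closing paragraph essentially concedes both points by deferring to the M\"{u}ller--\v{S}ver\'{a}k structure theorem; that is the honest position, and it is exactly what the paper does by citing rather than proving.
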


By \cite[Theorem 4.2.1]{MS}, for $\xi_1, \xi_2 \in \Omega^*$ as in Proposition \ref{distcomp}, $d_\Sigma(f(\xi_1), f(\xi_2))$ approaches $e^\lambda|\xi_1 - \xi_2|$ uniformly as $R \rightarrow \infty$. Taking piecewise-$C^1$-paths, we can now compare intrinsic and extrinsic distances on $\Sigma$. Thus, if $x_1, x_2 \in \Sigma \subset \R^3$, there exists a constant $C$ such that 
\[C^{-1} d_{\Sigma}(x_1, x_2) \le |x_1 - x_2| \le C d_{\Sigma}(x_1, x_2). 
\]
In particular, if $x_0 \in \Sigma$, $B^{\Sigma}_{Cr}(x_0)$ contains the connected component of $B_r(x_0)$ containing $x_0$. 

We must control the number of connected components in order to use this comparison to bound area ratios. We begin by showing that area ratios are uniformly bounded with respect to centers in any compact set.

\begin{prop}\label{arearatios}
If $\Sigma$ is a complete, embedded surface with finite total curvature and $K$ is a compact domain in $\R^3$, there exists a constant $\beta >0$ that bounds the following area ratios:
\begin{equation}\label{areabds}
\sup_{x \in K} \sup_{R>0} \frac{\mathcal{H}^2(\Sigma \cap B_R(x))}{R^2} < \beta.
\end{equation}
\end{prop}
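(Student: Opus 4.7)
The plan is to combine Hartman's intrinsic area bound (Theorem \ref{hartman}) with the intrinsic--extrinsic distance comparison $d_\Sigma(p,q) \le C|p-q|$ that was established just prior to this proposition using M\"uller--\v{S}ver\'ak. The key observation I would rely on is that, once a single reference point on the intersection is chosen, the \emph{entire} extrinsic ball's intersection with $\Sigma$ is forced into a single intrinsic ball centered at that point, after which Hartman gives the area bound for free.

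Concretely, I would fix $x \in K$ and $R > 0$. If $B_R(x) \cap \Sigma = \emptyset$ there is nothing to show, so I pick any reference point $p \in B_R(x) \cap \Sigma$. For every other $q \in B_R(x) \cap \Sigma$, the triangle inequality gives $|p-q| < 2R$, and so the distance comparison yields $d_\Sigma(p,q) \le 2CR$. Consequently
\[
B_R(x) \cap \Sigma \;\subset\; B^\Sigma_{2CR}(p),
\]
and Hartman's theorem bounds the area of the right-hand side by $C_1(2CR)^2 = 4C_1 C^2 R^2$. Dividing by $R^2$ and supremizing over $x \in K$ and $R > 0$ yields the proposition with $\beta := 4C_1 C^2 + 1$.

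The main obstacle, in my view, is really upstream: securing the global validity of the comparison constant $C$. M\"uller--\v{S}ver\'ak furnishes Lipschitz equivalence of intrinsic and extrinsic distances on each end of $\Sigma$ outside a sufficiently large ball, with a constant that a priori depends on the end, and Huber's theorem guarantees there are only finitely many ends. Patching these end-by-end constants with an interior smoothness argument on the compact core (via piecewise-$C^1$ paths, as in the discussion preceding the proposition) produces a single global $C$ independent of $x$ and $R$. Once this is in hand, the proposition is a one-line application of Hartman; in fact the same argument gives the bound for any $x \in \R^3$, which is why a stronger statement is flagged as Corollary \ref{uniformarearatios}.
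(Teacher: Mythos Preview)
Your argument rests on a \emph{global} distance comparison $d_\Sigma(p,q)\le C|p-q|$ with one constant $C$ valid for every pair $p,q\in\Sigma$, and this is where it breaks. For a surface with several ends asymptotic to parallel planes---exactly the situation for translators, by Proposition~\ref{blowdownuniq}---take $p$ far out on one end and $q$ the nearby point on a second end: then $|p-q|$ stays bounded (roughly the separation of the limit planes) while $d_\Sigma(p,q)\to\infty$. So no uniform $C$ can exist, and your inclusion $B_R(x)\cap\Sigma\subset B^\Sigma_{2CR}(p)$ is false once the ball meets two ends. The paper's displayed two-sided comparison just before the proposition is admittedly loosely phrased, but notice that the very next sentence retreats to the correct operational statement: $B^\Sigma_{Cr}(x_0)$ contains only the \emph{connected component} of $B_r(x_0)\cap\Sigma$ through $x_0$, and immediately after, the paper flags that ``we must control the number of connected components.'' Your ``patching'' paragraph does not address this; the obstruction is not in gluing end constants to a core constant, but in comparing points on distinct ends at all.

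This is precisely why the paper does \emph{not} prove Proposition~\ref{arearatios} via Hartman. Instead it fixes $x_0$ and uses the M\"uller--\v{S}ver\'ak limit \eqref{intrinsicextrinsic} (whose convergence is for a fixed basepoint, not uniform over $\Sigma$) together with Shiohama's formula $\lim_{r\to\infty}A(r)/r^2 = 2\pi\chi_\Sigma-\int_\Sigma K$ to see that $\sup_R \mathcal{H}^2(\Sigma\cap B_R(x_0))/R^2$ is finite for each center; then lower semicontinuity in $x$ and the extreme value theorem on the compact $K$ give the uniform $\beta$. Your Hartman idea does resurface, but only \emph{after} Proposition~\ref{arearatios}, in the passage leading to Corollary~\ref{uniformarearatios}: for centers outside $K$ the paper projects to the nearest point on each of the $d$ ends and covers $B_r(x)\cap\Sigma$ by $d$ intrinsic balls, one per end, then applies Theorem~\ref{hartman} to each. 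Replacing your single intrinsic ball by finitely many---one for every end---is the missing ingredient.
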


\begin{proof}
M\"{u}ller and \v{S}ver\'{a}k prove in \cite[Corollary 4.2.5]{MS} that for a fixed point $x_0 \in \Sigma$, the intrinsic and extrinsic distance functions coincide in the limit. That is,
\begin{equation}\label{intrinsicextrinsic} \lim_{\textrm{dist}_\Sigma(x, x_0)\rightarrow \infty} \frac{\textrm{dist}_\Sigma(x, x_0)}{|x-x_0|} = 1.
\end{equation}
Thus, for very large radii $R$, the intrinsic balls are comparable to extrinsic balls in the following sense:
\[B^\Sigma_{R}(x_0) \subset B_R \cap \Sigma \subset B^\Sigma_{2R}(x_0).
\]
Let $A(r)$ denote the area of the geodesic ball $B_r^\Sigma(x_0)$. A result from \cite{Sh} cited in the proof of \cite[Corollary 4.2.5]{MS} tells us that
\[\lim_{r \rightarrow \infty} \frac{A(r)}{r^2} = 2\pi \chi_\Sigma - \int_\Sigma K = \textrm{total number of ends}.
\]
By the finite total curvature property and Huber's theorem, the right hand side is bounded by a fixed constant. Since intrinsic and extrinsic balls are comparable, there exists some 
\[\beta > 2\pi \chi_\Sigma - \int_\Sigma K\]
such that 
\[\frac{\mathcal{H}^2(\Sigma \cap B_R(x_0))}{R^2} < \beta.
\]
By comparison in the limit as $R\rightarrow \infty$, we can see that this property holds for all $x_0 \in K$, and not only for points contained in $\Sigma$. 

We have a family of lower semi-continuous functions defined on $x\in K$
\[\bigg\{\frac{\mathcal{H}^2(\Sigma \cap B_R(x))}{R^2}\bigg\}_{R \in [0,\infty)}.
\]
Take the supremum of this family with respect to the index $R$. This is again a lower semi-continuous function with finite values, since the limiting area ratio for every center is a fixed finite number. We apply the extreme value theorem for lower semi-continuous functions to this new function on the compact set $K$. This proves the proposition.
\end{proof}

A neighborhood of each end can be parametrized as in Proposition \ref{distcomp}. By Huber's theorem, the complement of these neighborhoods is contained inside some compact set $K \subset \R^3$. By Proposition \ref{arearatios}, we know that the area ratios are bounded for balls centered in this region. We now consider points $x \in \R^3 \setminus K$. We know that if $x \in \Sigma \setminus K$ and $C_x(B_r(x) \cap \Sigma)$ is the connected component of $B_r(x) \cap \Sigma$ containing $x$, then there exists a uniform constant $C_2 > 0$ such that 
\[ C_x(B_r(x) \cap \Sigma) \subset B_{Cr}^\Sigma(x),
\]
and thus,
\[\frac{\mathcal{H}^2(C_x(B_r(x) \cap \Sigma))}{\pi r^2} \le C_2^2 \frac{\textrm{Area}(B_{C_2r}^\Sigma(x))}{\pi (C_2r)^2}.
\]
For a general point $x \in \R^3 \setminus K$, we consider the set $\{x_1, \ldots, x_d\}$, which is the nearest point projection to the $d$ ends of the surface $\Sigma$. Then, 
\[B_r(x) \cap \Sigma \subset \bigcup_{i=1}^d B^\Sigma_{C_2r}(x_i).
\]
The theorem of Hartman, Theorem \ref{hartman}, implies that there is a constant $C_3 > 0$ depending on $C_1$ and $C_2$ such that
\[\frac{\mathcal{H}^2(B_r(x) \cap \Sigma)}{\pi r^2} \le d C_3. 
\]
Combining the above bound with Proposition \ref{arearatios} proves a uniform bound on area ratios.
\begin{cor}\label{uniformarearatios}
There exists $D >0$ such that 
\[\sup_{x \in \R^3} \sup_{R>0} \frac{\mathcal{H}^2(\Sigma \cap B_R(x))}{R^2} < D.
\]
\end{cor}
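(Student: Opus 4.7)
The plan is to combine Proposition \ref{arearatios}, which already supplies a uniform area-ratio bound for centers in any fixed compact set, with Hartman's intrinsic area estimate (Theorem \ref{hartman}) and the asymptotic intrinsic--extrinsic distance comparison derived from Proposition \ref{distcomp}, in order to extend the bound to all centers in $\R^3$.

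First I would fix, using Huber's theorem, a compact set $K \subset \R^3$ so large that $\Sigma \setminus K$ splits into the disjoint union of $d$ annular end-neighborhoods, each realized as the image of the conformal parametrization $f : \Omega^* \to \Sigma$ of Proposition \ref{distcomp}. Proposition \ref{arearatios} applied to $K$ (mildly enlarged if needed) immediately supplies a constant $\beta$ with $\mathcal{H}^2(\Sigma \cap B_R(x)) \le \beta R^2$ for all $x \in K$ and $R > 0$. The remaining task is to bound area ratios centered at $x \in \R^3 \setminus K$. For such $x$, properness of the embedding (established earlier in this section) guarantees that nearest-point projections $x_1, \ldots, x_d$ of $x$ onto each end exist. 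The distance comparison following Proposition \ref{distcomp} yields a universal constant $C_2 > 0$ with $d_\Sigma(p, x_i) \le C_2 |p - x_i|$ for every $p$ on the $i$-th end, provided $K$ was chosen large enough for all ends simultaneously. For any $y \in B_R(x) \cap \Sigma$ lying on the $i$-th end, minimality of $x_i$ and the triangle inequality give $|y - x_i| \le 2R$, so $d_\Sigma(y, x_i) \le 2C_2 R$. Therefore
\[B_R(x) \cap \Sigma \;\subset\; \bigcup_{i=1}^d B^\Sigma_{2 C_2 R}(x_i),\]
and Hartman's theorem bounds the right-hand side by $d \cdot C_1 (2 C_2 R)^2$. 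Setting $D = \max\{\beta,\, 4 d C_1 C_2^2\} + 1$ completes the argument.

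The main obstacle is justifying that the comparison constant $C_2$ can be taken the same on all $d$ ends: Proposition \ref{distcomp} provides such a constant only asymptotically on each end separately, so one has to enlarge $K$ until every end has entered the regime where the comparison is valid with a fixed multiplicative factor. Finiteness of the number of ends, guaranteed by Huber's theorem, is what allows this enlargement to be done once and for all. A minor secondary point is handling connected components of $B_R(x) \cap \Sigma$ that straddle $\partial K$; these either meet $K$ (so they can be absorbed into the compact case after a further small enlargement of $K$) or lie wholly in one of the end-neighborhoods, where the argument above applies unchanged.
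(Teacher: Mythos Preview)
Your proposal is correct and follows essentially the same route as the paper: apply Proposition \ref{arearatios} on a compact $K$ chosen via Huber's theorem, and for centers outside $K$ project to the nearest point on each of the $d$ ends, use the intrinsic--extrinsic distance comparison from Proposition \ref{distcomp} to land in intrinsic balls, and invoke Hartman's Theorem \ref{hartman}. If anything you are slightly more explicit than the paper about the constant $2C_2$ (via the triangle inequality) and about the boundary case where $B_R(x)$ straddles $\partial K$.
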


\section{Blowing Down}

In this section, we prove that each blow-down sequence of $\Sigma$ has a subsequential $C^1_{loc}$-limit.

\begin{lem}\label{blowdownplane2}Let $\Sigma$ be a translating end with finite total curvature. For every sequence $\lambda_i \rightarrow 0$ there is a subsequence ${\lambda_i}_j \rightarrow 0$ such that the $C^1_{loc}$-limit of the rescalings ${\lambda_i}_{j}\Sigma$ in $\R^3 \setminus \{0\}$ is a plane $\Pi$ that is parallel to $V$. 
\end{lem}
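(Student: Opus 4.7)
The plan is to combine Simon's graphical decomposition lemma with the strong and weak maximum principles established above, following the strategy sketched in the introduction. First I would use finite total curvature to isolate a large annular neighborhood of infinity $A_{r_0, \infty} := \Sigma \cap (\R^3 \setminus B_{r_0})$ on which $\int_{A_{r_0,\infty}} |A|^2 < \epsilon^2$ is arbitrarily small. Rescaling by $\lambda_i \to 0$ places this annulus inside $B_{R_i} \setminus B_{\lambda_i r_0}$ with $R_i \to \infty$. After capping the inner boundary with a disk whose total curvature is controlled by that of the annulus, I would apply Simon's lemma to this disk to obtain an approximate graphical decomposition: outside a finite collection of ``pimples'' of small diameter, the rescaled surface is the graph of a function $u_i$ over some plane $\Pi_i$ with $|Du_i| \le C\epsilon^{1/6}$.

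Next, I would promote the pimples to genuine graphs. The curvature decay of Lemma \ref{curvdecay} prevents the normal from rotating too much inside a pimple of small diameter, so Schoen's moving-planes method combined with the strong maximum principle of Corollary \ref{SMP} applies and shows that the pimple is itself graphical over $\Pi_i$. Once graphicality is in hand, the weak maximum principle for gradients in Lemma \ref{WMP} propagates the boundary bound $|Du_i| \le C\epsilon^{1/6}$ into the pimple's interior. This yields a global representation of the rescaled annulus as a graph over $\Pi_i$ with gradient tending to zero as $i \to \infty$.

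Fixing any annulus $A = B_R \setminus B_{R^{-1}}$ in $\R^3 \setminus \{0\}$ and letting $i$ be large enough that $A \subset B_{R_i} \setminus B_{\lambda_i r_0}$, I then extract a subsequence along which $\Pi_i \to \Pi$ in $\textrm{Gr}(2,3)$ by compactness. Writing each $u_i$ as a graph over this common limiting plane $\Pi$ (after a small rotation that only perturbs the gradient bound), the translator equation \eqref{translatoreqn} becomes uniformly quasilinear elliptic on $A$ due to the gradient bound, and the area ratio bound of Corollary \ref{uniformarearatios} combined with standard elliptic regularity provides uniform $C^{1,\alpha}$ estimates. A diagonal argument over an exhausting family of annuli then gives a subsequential $C^1_{loc}$-limit $u$ on $\R^3 \setminus \{0\}$, and since $|Du| = 0$ the limit is the plane $\Pi$ itself.

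Finally, to see that $\Pi$ is parallel to $V$, I would apply Corollary \ref{normperpv}: for $p \in \Sigma$ at distance $\varrho$ from the origin, $\mathbf{n}(p)\cdot V = O(\varrho^{-1/2})$. A point $q$ in the fixed annulus $A \subset \lambda_{i_j}\Sigma$ corresponds to a point in $\Sigma$ at distance $\sim \lambda_{i_j}^{-1}$, so the unit normal of $\lambda_{i_j}\Sigma$ at $q$ satisfies $\mathbf{n}\cdot V = O(\lambda_{i_j}^{1/2}) \to 0$. Passing to the $C^1$-limit, the normal to $\Pi$ is orthogonal to $V$, i.e.\ $V \in \Pi$. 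The main obstacle I anticipate is the pimple-to-graph step: running Schoen's moving-planes method for a translator rather than a minimal surface requires verifying that reflected pieces are valid barriers under the translator equation and that the geometry of the pimple (diameter and height) is controlled finely enough by $\epsilon$ for the initial plane to slide in; a secondary technical point is ensuring the approximating planes $\Pi_i$ from Simon's lemma can be adjusted to a single subsequential limit $\Pi$ without destroying the gradient estimate.
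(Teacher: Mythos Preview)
Your proposal is correct and follows essentially the same route as the paper: cap the end with a small-curvature disk, apply Simon's lemma, upgrade the pimples to graphs via moving planes and Corollary \ref{SMP}, propagate the gradient bound with Lemma \ref{WMP}, and extract a subsequential planar limit by compactness and a diagonal argument. The only adjustment needed is one of order: both the moving-planes reflection (Theorem \ref{movplanes2}) and the weak gradient maximum principle (Lemma \ref{WMP}) require the base plane to already contain $V$, so you must invoke Corollary \ref{normperpv} to tilt Simon's approximating plane into a $V$-parallel plane $\Pi_i$ \emph{before} handling the pimples---as the paper does in Lemma \ref{graphicalradius}---rather than deducing $V \in \Pi$ only after passing to the limit.
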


The central tool that we will use to prove this lemma is the following graphical decomposition lemma due to L. Simon, \cite[Lemma 2.1]{S}. We restate the version of the lemma given in \cite{I} here for convenience. 

\begin{thm}[Simon's Lemma on $|A|^2$]\label{simonslemma2} For each $\beta > 0$, there is a constant $\epsilon_0 = \epsilon_0(\beta)$ such that if $\Sigma$ is a smooth 2-manifold properly embedded in $B_R \subset \R^3$ and 
\[\int_{\Sigma \cap B_R} |A|^2 \le \epsilon^2 \le \epsilon_0^2, \;\;\; \mathcal{H}^2(\Sigma \cap B_R) \le \beta R^2,
\]
then there are pairwise disjoint closed disks $\bar{P}_1, \ldots, \bar P_N$ in $\Sigma \cap B_R$ such that 
\begin{equation}\label{diambds2}\sum_m \diam (P_m) \le C(\beta)\epsilon^{1/2} R
\end{equation}
and for any $S \in [R/4, R/2]$ such that $\Sigma$ intersects $\partial B_S$ transversally and $\partial B_S \cap \bigcup_m \bar P_m = \emptyset$, we have
\[\Sigma \cap B_S = \cup_{i=1}^l D_i
\]
where each $D_i$ is an embedded disk. Furthermore, for each $D_i$, there is a 2-plane $L_i \subset \R^3$, a simply-connected domain $\Omega_i \subset L_i$, disjoint closed balls $\bar B_{i,p} \subset \Omega_i$, $p=1,\ldots, p_i$ and a function 
\[ u_i : \Omega_i \setminus \cup \bar B_{i,p} \rightarrow \R^3
\]
such that 
\begin{equation}\label{lipschitzbds2} \sup \bigg| \frac{u_i}{R} \bigg| + |Du_i| \le C(\beta) \epsilon^{1/6}
\end{equation}
and 
\[D_i \setminus \cup_m \bar P_m = \grph(u_i|\Omega_i \setminus  \cup \bar B_{i,p}).
\]
\end{thm}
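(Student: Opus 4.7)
The plan is to combine three ingredients: an $\varepsilon$-regularity statement of Allard type producing local graphical decompositions where $|A|$ does not concentrate, a covering argument isolating the concentration points as ``pimples,'' and a Gauss--Bonnet argument controlling the topology of each connected component. The broad picture is that the hypothesis $\int|A|^2 \le \epsilon^2$ forces $\Sigma$ to be graphical at scale $R$ away from a controlled family of small regions.

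First I would locate the pimples. Fix a threshold $\eta = \eta(\beta)$ small enough that Allard's $\varepsilon$-regularity theorem applies on any ball $B_r(x)$ with $\int_{\Sigma \cap B_r(x)} |A|^2 < \eta$ and $\mathcal{H}^2(\Sigma \cap B_r(x)) \le \beta r^2$. Define
\[
\mathcal{C} = \left\{x \in \Sigma \cap B_R : \text{some } r \in (0, R/8) \text{ has } \int_{\Sigma \cap B_r(x)} |A|^2 \ge \eta\right\},
\]
and for each $x \in \mathcal C$ let $r(x)$ be the infimal such radius. A Vitali extraction yields a finite pairwise disjoint family $\{B_{r_m}(x_m)\}_{m=1}^N$ with $\mathcal C \subset \bigcup_m B_{5 r_m}(x_m)$. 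By disjointness, $N\eta \le \epsilon^2$; combining this with the area ratio hypothesis (which, via the monotonicity formula, bounds $\sum r_m^2$ in terms of $R^2$) and Cauchy--Schwarz yields \eqref{diambds2}, where $P_m$ is defined to be the connected component of $B_{5 r_m}(x_m) \cap \Sigma$ through $x_m$. A coarea argument on $|\nabla^\Sigma|\mathbf x||$ then selects $S \in [R/4, R/2]$ such that $\partial B_S$ meets $\Sigma$ transversally, avoids every $\bar P_m$, and has $\int_{\Sigma \cap \partial B_S} |A|$ controlled.

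For each $x \in (\Sigma \cap B_S) \setminus \bigcup_m \bar P_m$, Allard's theorem gives a local $C^{1,\alpha}$-graphical representation of $\Sigma$ near $x$ over $T_x \Sigma$. The main task is then patching these local graphs into a single graph over one plane $L_i$ for each connected component $D_i$ of $\Sigma \cap B_S$. The Gauss map $\mathbf n : D_i \to S^2$ satisfies $|\nabla \mathbf n| = |A|$, so its $W^{1,2}$-norm is bounded by $\epsilon$. Interpolating between this $L^2$-bound and the pointwise Allard bound yields a $C^0$-oscillation estimate of $C(\beta)\epsilon^{1/6}$ for $\mathbf n$ on the graphical part of $D_i$; with $L_i$ taken to be any tangent plane of a point in $D_i$, a continuation argument expresses $D_i \setminus \bigcup \bar P_m$ as the graph of $u_i$ over a simply-connected $\Omega_i \subset L_i$ satisfying \eqref{lipschitzbds2}, the height bound following by integrating the gradient bound and using $D_i \subset B_S$. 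Applying Gauss--Bonnet to $D_i \setminus \bigcup \bar P_m$, whose total $|K|$ is bounded by $\epsilon^2$ and whose boundary geodesic curvatures are controlled by the slice choice and \eqref{diambds2}, forces $\chi = 1$, i.e.\ $D_i$ is a disk.

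The main obstacle is producing the exponent $\epsilon^{1/6}$ in the gradient bound together with the uniform smallness of the Gauss-map oscillation on each full component $D_i$. Since $W^{1,2}$ fails to embed in $L^\infty$ in two dimensions, the $L^2$-bound on $|A|$ alone does not control the $C^0$-oscillation of $\mathbf n$; one must couple it to the pointwise Allard bounds and execute a delicate Morrey/BMO-style interpolation patched across the graphical pieces, maintaining compatibility at the boundaries of the pimples. This interpolation, rather than any individual ingredient above, is the heart of Simon's argument.
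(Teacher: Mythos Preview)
The paper does not prove this theorem at all: it is quoted verbatim (via Ilmanen's restatement) from L.~Simon's paper on the Willmore functional and used as a black box. There is therefore no ``paper's own proof'' to compare your proposal against. Your sketch is a reasonable high-level outline of how Simon's argument actually proceeds---Vitali covering to isolate the $|A|^2$-concentration points as pimples, Allard/local graph decomposition off the pimples, Gauss--Bonnet to force disk topology, and an oscillation estimate for the Gauss map to get a single reference plane per component---but since the present paper simply cites the result, there is nothing here to check your sketch against beyond noting that it captures the right ingredients.
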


Roughly speaking, this lemma tells us that a surface of small total curvature contained inside a ball can be expressed as the union of disks that are graphical away from a small pathological set of discs $\{P_i\}$, which we call ``pimples." We recall that Simon states on p. 289 of \cite{S} that each the boundary of each disk $D_i$ is a graphical curve contained in $\partial B_S \cap \Sigma$. In particular, this tells us a) that the $D_i$ can be taken to be disjoint (they cannot be connected by a pimple $P_m$, as these are all topological disks) and b) that if we assume $0 \in \Sigma \subset \R^3$, then the connected component of $\Sigma \cap B_S(0)$ containing $0$ is contained in a neighborhood of the cone
\[X_\epsilon(\Pi, 0) = \{y \in \R^3 : \textrm{dist}(y, \Pi) \le C\epsilon^\frac{1}{6}|\pi(y)|\}.
\]

\begin{lem}\label{graphicalradius}
Let $\epsilon,R>0$ be given, and let $\Sigma$ be a complete embedded translator, and select a point $x \in \Sigma$ such that
\[\int_{\Sigma \cap B_R} |A|^2 \le \epsilon^2 \le \epsilon_0^2, \;\;\; \mathcal{H}^2(\Sigma \cap B_R(x)) \le \beta\pi R^2, \text{ and }|x| > C\epsilon^{-1/3}.
\]
Let $\Sigma_x$ be the connected component of $\Sigma \cap B_S(x)$ containing $x$ (where $S$ is as in Theorem \ref{simonslemma2}) and consider the collection of pimples $\bar{P}_m \subset \Sigma$. Then there exists $S' \in [R/8, R/16]$ such that $\Sigma \cap B_{S'}$ is the graph of a function with gradient bounded by $C\epsilon^\frac{1}{6}$, defined on a plane $\Pi$ parallel to the direction of motion, $V$.
\end{lem}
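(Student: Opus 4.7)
The plan is to apply Simon's lemma (Theorem \ref{simonslemma2}) at scale $R/4$, adjust the base plane it produces to be parallel to $V$ using the curvature decay of Corollary \ref{normperpv}, and then invoke the translator maximum principles (Corollary \ref{SMP} and Lemma \ref{WMP}) to absorb the pimples into a single graphical description.

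First I would apply Simon's lemma on $B_{R/4}(x)$. The total curvature bound on $\Sigma \cap B_R$ restricts to this subball, and the uniform area ratio $\beta$ is supplied by Corollary \ref{uniformarearatios}. This produces pimples $\bar P_1, \ldots, \bar P_N \subset \Sigma \cap B_{R/4}(x)$ with $\sum_m \diam(P_m) \le C\epsilon^{1/2} R$, and for any $S' \in [R/16, R/8]$ at which $\partial B_{S'}(x)$ meets $\Sigma$ transversally and misses every $\bar P_m$, a decomposition $\Sigma \cap B_{S'}(x) = \bigcup_i D_i$ into embedded disks, each the graph of a function $u_i$ over a plane $L_i$ (off the pimples and a discrete collection of small disks in $L_i$) with $\sup|u_i/R| + |Du_i| \le C\epsilon^{1/6}$. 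For $\epsilon$ small enough that $C\epsilon^{1/2}R \ll R/16$, Sard's theorem together with the diameter bound produces such an $S'$. The component $\Sigma_x$ containing $x$ is one such $D_{i_0}$, with base plane $L:=L_{i_0}$. Next I would upgrade $L$ to a plane containing $V$: Corollary \ref{normperpv} gives $|\mathbf{n}\cdot V| = O(\varrho^{-1/2})$ pointwise on $\Sigma$, and the hypothesis $|x|>C\epsilon^{-1/3}$ (together with a tacit $R \lesssim |x|$, so that $\varrho \gtrsim |x|$ on the relevant region) reduces this to $|\mathbf{n}\cdot V| = O(\epsilon^{1/6})$. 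On the graphical part of $\Sigma_x$, $\mathbf{n}$ is within $O(\epsilon^{1/6})$ of the normal $\mathbf{n}_L$, so $|\mathbf{n}_L\cdot V| = O(\epsilon^{1/6})$; rotating $L$ by this small angle yields a plane $\Pi$ with $V \in \Pi$ over which $\Sigma_x$ minus pimples is still a graph with gradient $\le C\epsilon^{1/6}$.

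The main obstacle is the next step: showing that each pimple is itself graphical over $\Pi$. For this I would use moving planes, exploiting that every plane parallel to $\Pi$ is itself a translator (since it contains $V$) and so enters the strong maximum principle Corollary \ref{SMP} on graphical pieces of $\Sigma_x$. Sliding such a parallel plane toward $\Sigma_x$ from $\pm \infty$ along $\mathbf{n}_\Pi$, any interior first contact forces $\Sigma_x$ to coincide with it locally; analytic continuation and the connectedness of $\Sigma_x$ then make $\Sigma_x$ entirely planar, which trivializes the conclusion. Otherwise the extremal contact must occur on $\partial \Sigma_x$, which lies in the graphical part and is therefore $C\epsilon^{1/6}R$-close to $\Pi$, confining $\Sigma_x$ to an $O(\epsilon^{1/6}R)$-slab about $\Pi$. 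Within this thin slab, a Schoen-style reflection across planes parallel to $\Pi$, applied to any candidate multi-sheeted pimple, produces two distinct translator graphs agreeing with the graphical boundary data on $\partial \Sigma_x$ and touching tangentially in the interior, contradicting Corollary \ref{SMP}. This rules out multi-valued projections and shows $\Sigma_x$ is a graph of some function $u$ over a domain $\Omega \subset \Pi$.

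To close, I would apply the weak maximum principle for gradients, Lemma \ref{WMP}, to $u$ on $\Omega$. The function $u$ satisfies \eqref{translatoreqn} since $V \in \Pi$, and on $\partial \Omega$ (corresponding to $\partial \Sigma_x \subset \partial B_{S'}(x)$, which lies in the part of $\Sigma_x$ already known to be graphical with small gradient by Theorem \ref{simonslemma2}) the bound $|Du| \le C\epsilon^{1/6}$ is already in hand. Lemma \ref{WMP} then propagates this boundary bound throughout $\Omega$, yielding $|Du| \le C\epsilon^{1/6}$ on all of $\Omega$. The technically hardest step is the reflection argument of the previous paragraph: one must verify that the reflected piece remains graphical in the sense needed to apply Corollary \ref{SMP} (this is where the slab thinness $C\epsilon^{1/6}R$ enters), and that the tangential contact produced is a genuine interior tangency, not an artifact at $\partial \Sigma_x$ or at the interface between the graphical region and a pimple.
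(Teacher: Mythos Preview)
Your outline is correct and follows the same skeleton as the paper --- Simon's lemma, rotate the base plane to contain $V$ via Corollary \ref{normperpv}, moving planes to kill the pimples, then Lemma \ref{WMP} for the gradient bound --- but there is one technical difference worth noting.

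You cut $\Sigma_x$ with the \emph{sphere} $\partial B_{S'}(x)$ and then add a preliminary sliding-planes step (parallel translates of $\Pi$ as barriers, plus analytic continuation) to trap $\Sigma_x$ in a thin slab before running the Schoen reflection. The paper instead exploits the cone containment coming out of Simon's lemma, $\Sigma_x \subset \mathcal{N}_{C\epsilon^{1/2}R}(X_\epsilon(\Pi,x))$, to cut with a \emph{cylinder} $\pi^{-1}(\partial D_{S'})$ at a radius $S'$ chosen to miss the pimples. That cylindrical slice is automatically a single graphical curve over $\partial D_{S'}$, and the piece of $\Sigma_x$ it bounds sits, by construction, inside the solid cylinder $D_{S'}\times\R$. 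This is exactly the hypothesis of the reflection theorem (stated and proved in situ as Theorem \ref{movplanes2}), so the moving-planes argument applies in one clean stroke. Your slab step becomes redundant, and the worry you raise at the end --- whether the first tangential contact in the reflection is a genuine interior tangency rather than an artifact at the pimple/graphical interface --- evaporates, since with the cylindrical boundary the entire $\partial(\pi^{-1}(D_{S'})\cap\Sigma_x)$ is already a graph over $\partial D_{S'}$ with small slope. Both routes work; the paper's is shorter and avoids the appeal to real-analytic continuation.
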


\begin{proof}
Away from the pimples $\{P_m\}$, the component $\Sigma_x$ is the graph of a function with small gradient over a plane, which we denote $L_x$. By Corollary \ref{normperpv}, $\mathbf{n}(x) \cdot V = O(|x|^{-1/2})$. Since 
\[|x|^{-1/2} < C^{-1}\epsilon^{1/6},
\]
we can take $L_x$ to be a plane $\Pi$ that is parallel to $V$. Let $\pi: \R^3 \rightarrow \Pi$ be the standard orthogonal projection map. Now, consider the following cone:
\[X_\epsilon(\Pi, x) = \{y \in \R^3 : \textrm{dist}(y-x, \Pi) \le C\epsilon^\frac{1}{6}|\pi(y-x)|\}.
\]
By the graphical decomposition lemma, $\Sigma_x$ is contained in a neighborhood of $X_\epsilon(\Pi,x)$ with radius $C\epsilon^{1/2}R$:
\[\mathcal{N}_{C\epsilon^{1/2}R}(X_\epsilon(\Pi, x)) = \{ y + t\nu : y \in X_\epsilon(\Pi,x), \; t \in [0,C\epsilon^{1/2}R), \; \nu \in S^2 \}.
\]
Let $D_s$ denote the disk centered at $x$ of radius $s$ in the plane $\Pi$. This boundedness of $\Sigma_x$ ensures that for sufficiently small $\epsilon >0$, the intersections $\pi^{-1}(\partial D_s) \cap \Sigma_x$ are compact and do not intersect the boundary $\partial \Sigma_x$ for all $s \in [R/4,R/8]$. By the diameter bounds on the pimples and the transversality theorem, there is a number $S' \in [R/4,R/8]$ such that $\pi^{-1}(\partial D_{S'}) \cap \{P_m\} = \emptyset$ and $\pi^{-1}(\partial D_{S'})$ intersects $\Sigma_x$ transversely. By Theorem \ref{simonslemma2}, this intersection is a single graphical curve $\eta$ over $\partial D_{S'}$ that bounds a disk, $\pi^{-1}(D_{S'}) \cap \Sigma$.
\\
\\We now show that the surface given by $\pi^{-1}(D_{S'}) \cap \Sigma$ is a graph over $\Pi$. In order to prove this, we adapt the Alexandrov moving plane method of Schoen for minimal surfaces (\cite[Theorem 1.29]{CM} and \cite[Theorem 1]{Sc}). This has been previously done for translators in \cite{MSHS}.

\begin{thm}[Method of Moving Planes]\label{movplanes2} Let $\Omega \subset \Pi$ be an open set with $C^1$ boundary and let $\{\sigma_i\} \subset \R^3$ be simple closed curves each of which are graphs over distinct components of $\partial \Omega$ with bounded slope. Further assume that for any point $x \in \sigma_i \subset \Sigma$, the tangent plane to $\Sigma$ is well defined and does not contain the vertical normal vector $e_3$. Then any translator $\Sigma \subset \R^3$ contained in $\Omega \times \R$ with $\partial \Sigma = \cup_i \sigma_i$ must be graphical over $\Omega$.
\end{thm}

\begin{proof} Let $\Pi$ be spanned by unit vectors $e_1, e_2$ and let the normal direction be $e_3$. Given the plane $\{x_3 = t\}$, $\Sigma$ is divided into the portions $\Sigma^+_t$ above and $\Sigma^-_t$ below the plane. We reflect $\Sigma^+_t$ below the plane to obtain a new translator $\tilde \Sigma^+_t$ below the plane. Note that because $V$ is contained in $\Pi$, reflection in $\Pi$ preserves the translator equation. We decrease $t$ until we encounter the first point $p$ where (a)$T_p\Sigma$ contains the vector $e_3$ or (b) $\tilde \Sigma^+_t$ and $\Sigma^-_t$ have an interior point of contact. Let the critical height $t$ be called $t_0$. Suppose that (a) occurs--this means that the tangent planes $T_p \tilde \Sigma^+_{t_0}$ and $T_p \Sigma^-_{t_0}$ coincide and we may apply the maximum principle, Corollary \ref{SMP}, to show that $\Sigma$ must have a reflection symmetry through $\{x_3 = t_0\}$. This contradicts the graphicality of the boundary unless $\Sigma$ coincides with $\{x_3 = t_0\}$. 

If $\Sigma$ is not one-to-one (i.e. $t_0$ exists) and case (a) does not occur, then case (b) must occur. If there is a first interior point of contact, we may apply Corollary \ref{SMP} directly to show that $\Sigma$ has a reflection symmetry through $\{x_3 = t_0\}$. This is a contradiction, and thus neither case (a) nor case (b) occurs. Because $\partial \Sigma$ is a disjoint union of graphs with bounded slope over disjoint planar curves, there is no boundary point of contact. Thus, the projection of $\Sigma$ to $\Pi$ is one-to-one and $\Sigma$ is a graph.
\end{proof}

If we let $\Omega = D_{S'} \subset \Pi$ and $\sigma = \pi^{-1}(\partial D_{S'}) \cap \Sigma$, then $\pi^{-1}(D_{S'}) \cap \Sigma$ clearly satisfies the conditions of Theorem \ref{movplanes2} and is a graph over $D_{S'}$. Furthermore, we know from the graphical decomposition lemma, Theorem \ref{simonslemma2}, that $|Du|_\eta| \le C\epsilon^\frac{1}{6}$. By Lemma \ref{WMP}, the weak maximum principle for derivatives, $\pi^{-1}(D_{S'}) \cap \Sigma$ can be written as the graph of a function $u$ defined on $D_{S'} \subset \Pi$ with $|Du|\le C\epsilon^\frac{1}{6}$. This completes the proof of the lemma.
\end{proof}

Given some $\epsilon >0$, we can find $R_\epsilon >0$ such that 
\[\int_{\Sigma \setminus B_{R_\epsilon}} |A|^2 d\mathcal{H}^2 < \epsilon^2,
\]
and $\Sigma \setminus B_{R_\epsilon}$ decomposes into disjoint annular ends. Given one of these ends, $\Sigma_i$, we wish to glue a disk with small total curvature to the inner boundary. The resulting surface  $\Sigma_i'$ will be a topological disk without boundary to which we may apply Simon's Lemma (Theorem \ref{simonslemma2}). To this end, for each $R_\epsilon$, we find a cylindrical curve with special properties that will allow us to carry out the gluing construction. 

\begin{lem}\label{cylindricalcurve}
Given $\epsilon >0$ and $R_\epsilon > 0$ as above, there exists a plane $\Pi$ such that $V \in \Pi$, a radius $\rho>0$ such that $\pi(\Sigma \cap B_{R_\epsilon}) \subset D_\rho \subset \Pi$, and a graphical curve $\Gamma \subset \Sigma$ over $\partial D_{\rho} \subset \Pi$ such that
\[ \int_{\Gamma} |A|^2 d\mathcal{H}^1 \le \frac{C\epsilon^2}{\rho}.
\]
\end{lem}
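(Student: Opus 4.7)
Fix one of the ends $\Sigma_i$ of $\Sigma \setminus B_{R_\epsilon}$, and let $\pi: \R^3 \to \Pi$ denote orthogonal projection onto the asymptotic plane $\Pi$ (to be identified below). The plan is to use that $\Sigma_i$ is asymptotically graphical over $\Pi$, so that the level sets $\Sigma_i \cap \pi^{-1}(\partial D_\rho)$ are already closed graphical curves for $\rho$ large, and then apply the coarea formula together with the mean value theorem to single out a specific $\rho$ on which the curvature integral of the level set is bounded by $C \epsilon^2 / \rho$.

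To identify $\Pi$, I would invoke the M\"{u}ller--\v{S}ver\'{a}k results recalled in the preliminaries: the Gauss map of $\Sigma_i$ converges as $|x| \to \infty$ to a constant direction $\mathbf{n}_0 \in S^2$, and the conformal parametrization becomes asymptotically affine onto a 2-plane $L$. By Corollary \ref{normperpv}, $\mathbf{n}_0 \cdot V = 0$, so $V \in L =: \Pi$. Combining the unit multiplicity of the end with the smallness $\int_{\Sigma_i} |A|^2 < \epsilon^2$ and Lemma \ref{graphicalradius} at suitable far-out basepoints, one obtains a radius $R_1 > R_\epsilon$ such that $\Sigma_i \setminus B_{R_1}$ is the graph of a function $u$ over an unbounded annular region in $\Pi$ with $|Du| \le C \epsilon^{1/6}$. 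Then choose $\rho_0 > R_1$ large enough that $\pi(\Sigma \cap B_{R_\epsilon}) \subset D_{\rho_0/2}$.

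The key analytic step is the coarea formula applied to $g(x) := |\pi(x)|$ restricted to $\Sigma_i$. Since $g$ is 1-Lipschitz on $\R^3$, its tangential gradient satisfies $|\nabla^\Sigma g| \le 1$, and hence
\[
\int_{\rho_0}^{2\rho_0} \left( \int_{\Sigma_i \cap \{|\pi|=\rho\}} |A|^2 \, d\mathcal{H}^1 \right) d\rho \;\le\; \int_{\Sigma_i} |A|^2 \, d\mathcal{H}^2 \;<\; \epsilon^2.
\]
By the mean value theorem for integrals, there exists $\rho \in [\rho_0, 2\rho_0]$ with
\[
\int_{\Sigma_i \cap \{|\pi|=\rho\}} |A|^2 \, d\mathcal{H}^1 \;\le\; \frac{\epsilon^2}{\rho_0} \;\le\; \frac{2 \epsilon^2}{\rho}.
\]
For such a $\rho$, the asymptotic graphicality of $\Sigma_i \setminus B_{R_1}$ over $\Pi$ makes $\Gamma := \Sigma_i \cap \pi^{-1}(\partial D_\rho)$ a single closed graphical curve over $\partial D_\rho$, completing the proof.

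The main obstacle is establishing the global graphicality of $\Sigma_i \setminus B_{R_1}$ over a single plane $\Pi$. Lemma \ref{graphicalradius} produces only local graphicality over a plane $L_x$ that a priori depends on the basepoint $x$; bridging these local choices into a single global graph requires the uniform convergence of the Gauss map on each end to the constant direction $\mathbf{n}_0$ furnished by M\"{u}ller--\v{S}ver\'{a}k. Once all local tangent planes are uniformly close to $\Pi = \mathbf{n}_0^\perp$, the local graphs can be rewritten as graphs over $\Pi$ with only a small degradation in the gradient bound, and then patched into one global graph using uniqueness of the graphical representation (Corollary \ref{SMP}) together with the unit multiplicity of the end.
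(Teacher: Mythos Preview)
Your coarea-on-cylinders step (the final displayed computation) is exactly the paper's closing move, but the setup leading to it has a circularity problem. You assume that outside some ball $B_{R_1}$ the end $\Sigma_i$ is globally a graph over a single fixed plane $\Pi$, and you propose to justify this from the M\"uller--\v{S}ver\'ak Gauss-map convergence together with Lemma~\ref{graphicalradius}. But the preliminaries only record \emph{subsequential} convergence of the rescaled Gauss map to a plane $L$; uniqueness of the asymptotic plane is precisely Proposition~\ref{blowdownuniq}, proved in Section~4, and that proposition depends on Lemma~\ref{cylindricalcurve} (via the surgery to $\Sigma'$ and Lemmas~\ref{smoothsurgery}--\ref{graphicalannuli}). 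So invoking global graphicality over $\Pi$ to prove Lemma~\ref{cylindricalcurve} is circular within the paper's logical structure. Your last paragraph effectively concedes that this is the crux, but the resolution you sketch---patching the local graphs from Lemma~\ref{graphicalradius} using ``uniform convergence of the Gauss map to $\mathbf{n}_0$''---is not available from the results actually cited.

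The paper sidesteps the need for a global graph altogether. It first runs a coarea argument on \emph{spheres} $\{|x|=r\}$, $r\in(2R_\epsilon,3R_\epsilon)$, to pick $R$ with $\int_{\Sigma\cap\partial B_R}|A|\,d\mathcal{H}^1\le C\epsilon$; this bounds the oscillation of the unit normal along the single closed curve $\gamma=\Sigma\cap\partial B_R$, so $\gamma$ lies in a thin graphical strip over \emph{some} plane $\Pi$ containing $V$ (chosen here, not assumed in advance). Lemma~\ref{graphicalradius}, applied at the points of $\gamma$, then fattens this strip to a graphical annulus over $\Pi$ of width comparable to $R$. Only on this \emph{finite} annulus does the paper run the coarea-on-cylinders step with $g=|\pi(\cdot)|$ to extract $\rho$ and $\Gamma$. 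No asymptotic or global graphicality is used---just one good spherical slice producing one annular band.
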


\begin{proof}
By Corollary \ref{normperpv}, we may choose $R_\epsilon$ large enough that $\mathbf{n}\cdot V < C\epsilon^\frac{1}{6}$ in $\Sigma \setminus B_{R_\epsilon}$. Let $f(x) = |x|$. The norm of the tangential gradient $|\nabla^{\Sigma} f|$ is bounded by 1. The coarea formula says that 
\begin{align*} \int_{2R_\epsilon}^{3R_\epsilon}\int_{\Sigma \cap \{f = r\}} |A|d\mathcal{H}^1 dr &= \int_{\Sigma \cap \{2R_\epsilon < f <3R_\epsilon \}} |A||\nabla^\Sigma f| d\mathcal{H}^2
\end{align*}
Note that by H\"{o}lder's inequality and the assumed area bounds
\[\int_{\Sigma \cap \{2R_\epsilon < f <3R_\epsilon \}} |A|d\mathcal{H}^2 \le C R_\epsilon \bigg( \int_{\Sigma \cap \{2R_\epsilon < f <3R_\epsilon \}} |A|^2  d\mathcal{H}^2 \bigg)^{\frac{1}{2}}
\]
There exists some $R \in (2R_\epsilon, 3R_\epsilon )$ such that 
\begin{align}\label{coareasmallcurv} \int_{\Sigma \cap \{f = R\}} |A|d\mathcal{H}^1 & \le \frac{1}{R_\epsilon} \int_{\Sigma \cap \{2R_\epsilon < f < 3R_\epsilon \}} |A| d\mathcal{H}^2  \\ 
& \le C\bigg(\int_{\Sigma \cap \{2R_\epsilon < f < 3R_\epsilon \}} |A|^2 d\mathcal{H}^2\bigg)^{\frac{1}{2}} \nonumber \\
& \le C \epsilon. \nonumber
\end{align}

By embeddedness, we may assume that $\partial B_R$ intersects $\Sigma$ transversely and $\gamma = \partial B_{R} \cap \Sigma$ is a closed curve. From the inequality \eqref{coareasmallcurv}, the normal vector of $\Sigma$ has very small oscillation on the curve $\gamma$ for sufficiently small $\epsilon$. Thus, the curve $\gamma$ is contained in a small graphical annular region with gradient bounded by $C\epsilon^{1/6}$. The radius around the graphical neighborhood of each point in $\gamma$ is extended to $R/8$ by applying Lemma \ref{graphicalradius}. Approximately, this tells us that $\gamma$ is very close (on the order of $C\epsilon^{1/6}R$) to the cross section of some translate of $\Pi$ with the ball $B_\rho$. 

Ultimately, what we obtain from this argument is that the union of these neighborhoods of $\gamma$ contains the graph of a function $u$ defined on a planar annulus $\Omega = D_{R/16 + \alpha}\setminus D_{R/32 + \alpha}$ with width $R/32$ and $\alpha >0$. Note that we may choose $R>R_\epsilon$ large enough that $\pi(\Sigma \cap B_{R_\epsilon}) \subset D_{R/32} \subset \Pi$, where $\pi$ is orthogonal projection to $\Pi$.

Assume that $0 \in \Pi$ and let $g(x) = |\pi(x)|$ for all $x \in \R^3$. The coarea formula gives
\begin{align*}
    \int_{R/32+\alpha}^{R/16 + \alpha} \int_{\Sigma \cap \{ g = r\}} |A|^2 d\mathcal{H}^1 dr = \int_{\textrm{graph}(u)}|A|^2|\nabla^\Sigma g| d\mathcal{H}^2.
\end{align*}
Thus, there exists $\rho \in (R/32 + \alpha, R/16 + \alpha)$ such that
\[\int_{\Sigma \cap \{ g = \rho\}} |A|^2 d\mathcal{H}^1 \le \frac{32}{R} \int_{\textrm{graph}(u)}|A|^2 d\mathcal{H}^2,
\]
where we have used the fact that $|\nabla^{\Sigma} g| \le 1$. Furthermore, by construction, $\rho \le 17R/16$. Thus,
\begin{align}\label{coareacurve}
    \int_{\Sigma \cap \{ g = \rho\}} |A|^2 d\mathcal{H}^1 & \le \frac{C}{\rho} \int_{\textrm{graph}(u)}|A|^2 d\mathcal{H}^2\\
    &\le \frac{C\epsilon^2}{\rho}.
\end{align}
Note that $C>0$ is an absolute constant that does not depend on $R, R_\epsilon,$ or $\epsilon$. Letting the curve $\{g = \rho\}$ equal $\Gamma$ completes the proof of the lemma.
\end{proof}

In the proof of Lemma \ref{cylindricalcurve}, $\Gamma$ is a graph over $\partial D_\rho \subset \Pi$ and is contained in the graph of a function $u$, defined over an annulus $\Omega$ such that $\partial D_\rho \subset \Omega$. We now cut out the interior region in $\Sigma$ bounded by $\Gamma$, leaving only an annular end with total curvature bounded by $\epsilon^2$. We will appeal to the following lemma proved by L. Simon in \cite{S} to find a candidate disk with small total curvature  and boundary equal to $\Gamma$ to replace the excised region. 

\begin{lem}\cite[Lemma 2.2]{S}\label{biharmonicreplacement2}
Let $\Sigma^2 \subset \R^3$ be smooth embedded, $\xi \in \R^n$, $L$ a plane containing $\xi$, $u \in C^\infty(U)$ for some open $(L-)$neighborhood $U$ of $L \cap \partial B_\rho(\xi)$, and 
\[\textrm{graph } u \subset \Sigma, \;\;\;\; |Du|\le 1.
\]
Also, let $w \in C^\infty(L \cap \bar{B}_\rho(\xi))$ satisfy  
\begin{equation}\label{dirichletprob2}
  \left\{
     \begin{array}{lr}
       \Delta^2 w = 0 & \text{on } L \cap B_\rho(\xi)\\
       w= u, Dw = Du & \text{on } L \cap \partial B_\rho(\xi).
     \end{array}
   \right.
\end{equation} 
Then
\begin{equation}\label{hessline2}\int_{L\cap B_\rho(\xi)} |D^2w|^2 \le C\rho \int_{\Gamma} |A|^2 d\mathcal{H}^1,
\end{equation}
where $\Gamma = \textrm{graph}(u|L\cap \partial B_\rho(\xi))$, $A$ is the second fundamental form of $\Sigma$, and $\mathcal{H}^1$ is 1-dimensional Hausdorff measure (i.e. arc-length measure) on $\Gamma$; $C$ is a fixed constant independent of $\Sigma$, $\rho$. 
\end{lem}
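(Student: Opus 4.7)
The plan exploits the variational/trace theory of the biharmonic Dirichlet problem together with a pointwise comparison between $|D^2 u|$ and $|A|$ along $\Gamma$. First, I would rescale by $y = \rho^{-1}(x - \xi)$ and $w \mapsto \rho^{-1} w(\xi + \rho y)$: under this rescaling $|Du|$ and $\int |D^2 w|^2$ are preserved while $\int_\Gamma |A|^2 \, d\mathcal{H}^1$ scales by $\rho^{-1}$, which produces the factor of $\rho$ in \eqref{hessline2} automatically and reduces the problem to $\rho = 1$, $\xi = 0$. The standard elliptic estimate for the biharmonic Dirichlet problem \eqref{dirichletprob2} on the unit disk then reads
\[
\int_{D_1} |D^2 w|^2 \le C \bigl( \|f\|_{H^{3/2}(\partial D_1)}^2 + \|g\|_{H^{1/2}(\partial D_1)}^2 \bigr),
\]
where $f = u|_{\partial D_1}$ and $g = \partial_\nu u|_{\partial D_1}$. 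It therefore suffices to control the right-hand side by $\int_\Gamma |A|^2 \, d\mathcal{H}^1$.

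The pointwise input is the comparison $|D^2 u|^2 \le C|A|^2$ along $\Gamma$, valid because $|Du|\le 1$: this follows from the graph formula for the second fundamental form together with uniform bounds on the induced metric. Parametrizing $\partial D_1$ by arc length $s$ and writing $u_{TT}$, $u_{TN}$ for the tangential-tangential and tangential-normal components of $D^2 u$ along the boundary, the identities obtained by differentiating $u(\cos s, \sin s)$ read $f''(s) = u_{TT}(s) - g(s)$ and $g'(s) = u_{TN}(s) + f'(s)$. Integrating the pointwise comparison therefore yields
\[
\int_{\partial D_1} \bigl( (f'' + g)^2 + (g' - f')^2 \bigr)\, ds \le C \int_\Gamma |A|^2 \, d\mathcal{H}^1.
\]

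Next, I would introduce the first-order linear operator $L(f, g) := (f'' + g,\, g' - f')$ on pairs of smooth periodic functions and establish the Korn-type closed-range inequality
\[
\inf_{\ell \text{ affine}} \bigl( \|f - \ell|_\partial\|_{H^{3/2}}^2 + \|g - \partial_\nu \ell|_\partial\|_{H^{1/2}}^2 \bigr) \le C\,\|L(f, g)\|_{L^2}^2.
\]
A direct Fourier computation diagonalizes $L$: on the mode $e^{ins}$ it acts as a $2\times 2$ matrix with determinant $in(1 - n^2)$, which vanishes precisely at $n = 0, \pm 1$, and whose kernel at those modes is exactly the Cauchy data of affine functions $\ell(x, y) = a_0 + a_1 x + a_2 y$. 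For $|n|\ge 2$ the inverse has operator norm of order $n^{-1}$, and Parseval then closes the estimate.

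To conclude, observe that affine functions are biharmonic with vanishing Hessian, so both the LHS of \eqref{hessline2} and $L(f, g)$ are invariant under subtraction of an affine function from $u$. Choosing $\ell$ to realize the infimum in the Korn inequality and combining with the biharmonic trace estimate of the first paragraph yields
\[
\int_{D_1} |D^2 w|^2 \le C \|L(f, g)\|_{L^2}^2 \le C \int_\Gamma |A|^2 \, d\mathcal{H}^1,
\]
which is \eqref{hessline2}. The chief obstacle is the closed-range inequality for $L$: while the kernel is easy to identify geometrically as the Cauchy data of affine functions, verifying the uniform lower bound on the non-kernel Fourier modes requires the explicit computation of the inverse's operator norm sketched above.
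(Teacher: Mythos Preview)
The paper does not give its own proof of this lemma: it is quoted verbatim as \cite[Lemma 2.2]{S} and used as a black box, so there is no in-paper argument to compare against. Your plan is a correct and essentially self-contained proof, and in spirit it is close to Simon's original argument (scaling to the unit disk, exploiting the variational/trace theory of the biharmonic Dirichlet problem, and reducing to a Fourier computation on $S^1$).

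A few remarks to tighten the write-up. First, the scaling step is fine but should be stated carefully: under $y=\rho^{-1}(x-\xi)$ and $\tilde w(y)=\rho^{-1}w(\xi+\rho y)$ one has $\int_{D_1}|D^2\tilde w|^2=\int_{B_\rho}|D^2w|^2$ while $\int_{\tilde\Gamma}|\tilde A|^2\,d\mathcal H^1=\rho\int_\Gamma|A|^2\,d\mathcal H^1$, which is exactly the factor you need. Second, the key observation that you only recover $u_{TT}$ and $u_{TN}$ (not $u_{NN}$) from the boundary data is precisely why the closed-range inequality for $L(f,g)=(f''+g,\,g'-f')$ is needed; your Fourier computation is correct, with the mode-$n$ matrix having determinant $in(1-n^2)$ and kernel at $n=0,\pm1$ equal to the Cauchy data of affine functions, and for $|n|\ge2$ the inverse produces $|n|^3|a_n|^2+|n|\,|b_n|^2\lesssim|c_n|^2+|d_n|^2$, which is the required $H^{3/2}\times H^{1/2}\to L^2$ bound. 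Third, the comparison $|D^2u|^2\le C|A|^2$ under $|Du|\le1$ should be justified via the graph formula $h_{ij}=u_{ij}/\sqrt{1+|Du|^2}$ together with the uniform equivalence of $g_{ij}=\delta_{ij}+u_iu_j$ with the Euclidean metric; also note that the arc-length measures on $\partial D_1$ and on $\Gamma$ are comparable for the same reason. With these points made explicit, your argument goes through.
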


Note that the solution $w$ exists and is unique \cite{S}, and that there is the following maximum principle for the biharmonic equation from \cite{PV}.

\begin{thm}[Maximum Principle for the Biharmonic Equation]\label{biharmmax2} 
If $\Delta^2u = 0$ in $D$, a bounded Lipschitz domain in $\R^n$, and $|D u|$ is continuous in $\bar{D}$, then there is a constant $C$ that depends only on the Lipschitz character of $D$ and independent of the diameter of $D$ such that 
\[||D u||_{L^\infty(D)} \le C||D u||_{L^\infty(\partial D)}.
\]
\end{thm}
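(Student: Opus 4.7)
The plan is to reduce the theorem to the Pipher-Verchota $L^\infty$ estimate for the biharmonic Dirichlet problem on Lipschitz domains, which is the actual content of \cite{PV}. I would organize the argument in three steps.

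First, a scaling reduction. Under $x \mapsto \lambda x$, the equation $\Delta^2 u = 0$ is preserved, and both $\|Du\|_{L^\infty(D)}$ and $\|Du\|_{L^\infty(\partial D)}$ scale by the same factor $\lambda^{-1}$. Hence we may assume $\diam(D)=1$, so that any constant we produce automatically depends only on the Lipschitz character of $D$.

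Second, a componentwise reduction. Since $\Delta^2$ commutes with differentiation, each partial derivative $w_i := D_i u$ is itself biharmonic on $D$ and continuous up to $\bar D$. It therefore suffices to establish a scalar max principle: if $w$ is biharmonic on $D$ with $w \in C(\bar D)$, then $\|w\|_{L^\infty(D)} \le C \|w\|_{L^\infty(\partial D)}$ with $C$ depending only on the Lipschitz character. Applying this to each $w_i$ and then using $|Du|^2 = \sum_i w_i^2$ recovers the full statement up to a dimension-dependent factor absorbed into $C$.

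Third, the scalar estimate. This is the deep part. The Pipher-Verchota strategy is to represent $w$ via biharmonic single and double layer potentials, which transports the problem of pointwise bounds inside $D$ to mapping properties of certain boundary integral operators on $\partial D$. Dahlberg-Kenig-Verchota $L^p$-theory for harmonic measure on Lipschitz domains then controls these operators, and an endpoint argument (or direct use of the biharmonic Poisson kernel in $L^\infty$) promotes the $L^p$ bounds to $L^\infty$. The main obstacle, and the reason I would cite \cite{PV} rather than attempt a self-contained proof, is precisely this layer-potential machinery: on $C^1$ boundaries the relevant kernels have classical Calderón-Zygmund structure and the estimate is essentially soft, but on merely Lipschitz boundaries genuine harmonic-analytic tools ($T1$-theorem, Muckenhoupt weights, nontangential maximal function estimates) are required to close the argument with a constant that tracks only the Lipschitz character and not the diameter. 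In the present application the domain $L \cap \bar B_\rho(\xi)$ of Lemma \ref{biharmonicreplacement2} is a flat disk, whose Lipschitz character is absolute, so the constant produced by \cite{PV} is genuinely universal and independent of $\rho$, matching the formulation stated here.
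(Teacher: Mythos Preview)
Your second step contains a genuine error. The scalar maximum principle you propose for biharmonic functions --- namely, if $w$ is biharmonic on $D$ and continuous on $\bar D$ then $\|w\|_{L^\infty(D)} \le C\|w\|_{L^\infty(\partial D)}$ --- is simply false. Take $D$ to be the unit ball and $w(x) = |x|^2 - 1$: this is biharmonic (indeed $\Delta w$ is constant), vanishes identically on $\partial D$, yet $\|w\|_{L^\infty(D)} = 1$. The biharmonic operator is fourth order, so boundary values alone do not determine a biharmonic function, and no estimate of interior values by boundary values alone can hold. Your componentwise reduction therefore collapses: applying a false scalar principle to each $w_i = D_i u$ proves nothing, and the Pipher--Verchota paper does not contain the scalar statement you would be citing.

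The reason the gradient estimate in the theorem \emph{is} true is that $Du|_{\partial D}$ carries both pieces of Cauchy data for the biharmonic Dirichlet problem: the tangential part of $Du$ determines $u|_{\partial D}$ up to a constant on each component of $\partial D$, and the normal part is $\partial_\nu u$. Thus $Du|_{\partial D}$ determines $u$ up to additive constants, hence $Du$ everywhere, and an a priori bound becomes plausible. This is exactly what \cite{PV} proves directly, via the layer-potential and harmonic-analysis machinery you describe in your third step; there is no intermediate reduction to a scalar principle. The paper itself does not reprove the result but simply invokes \cite{PV} as a black box, which is also what you should do --- your scaling observation in step one is correct and is the only thing needed to check that the constant is diameter-independent for the disk $L \cap B_\rho(\xi)$.
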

Since $||Du||_{L^\infty(\partial B_\rho(\xi))} \le C\epsilon^{1/6}$, Theorem \ref{biharmmax2} implies that $||Du||_{L^\infty(B_\rho(\xi))} \le C\epsilon^{1/6}$, and thus there exists a uniform constant $C>0$ not depending on $R$ or $\epsilon$ such that
\[\int_{\textrm{graph }w} |A|^2 d\mathcal{H}^2 \le C \int_{L\cap B_\rho} |D^2w|^2.
\]
Combining this estimate with \eqref{hessline2} and Lemma \ref{cylindricalcurve}, we obtain the desired bound
\[\int_{\textrm{graph }w} |A|^2 d\mathcal{H}^2 \le C\epsilon^2,
\]
where $C$ is independent of $R$ and $\epsilon$.

We attach the graph of $w$ to the end of $\Sigma$ bounded by $\Gamma$. Simply joining these surfaces along $\Gamma$ results in a $C^1$ surface $S$, which does not satisfy the hypotheses of Theorem \ref{simonslemma2}: a $C^2$ surface is needed. To improve the regularity, we approximate the piecewise surface in the $H^2$-Sobolev norm by smooth functions. 

\begin{lem}\label{smoothsurgery}
Given $R>0$ as in Lemma \ref{cylindricalcurve} and a connected component of $\Sigma \setminus R_\epsilon$, we can find a smooth topological disk $\Sigma'$ such that $\Sigma' = \Sigma$ outside the ball $B_{2R}$.  
\end{lem}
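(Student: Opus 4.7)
The plan is to perform a local mollification of the $C^1$ surface $S$ obtained by joining the graph of $w$ to the annular end of $\Sigma$ along $\Gamma$. Because the biharmonic boundary conditions in \eqref{dirichletprob2} force $w$ and $u$ to agree up to first derivatives on $L \cap \partial B_\rho(\xi)$, in a neighborhood of $\pi(\Gamma) \subset \Pi$ both pieces of $S$ may be expressed as graphs over a common annular neighborhood $U \subset \Pi$ of $\pi(\Gamma)$ of a combined height function $\phi : U \to \R$. This $\phi$ is $C^1$ and piecewise smooth, with a bounded jump of $D^2\phi$ across $\pi(\Gamma)$; in particular $\phi \in H^2(U)$.

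First, I would fix a small parameter $\delta > 0$ and a smooth cutoff $\chi$ supported in a tubular neighborhood $U_\delta$ of $\pi(\Gamma)$ of width $O(\delta)$, with $\chi \equiv 1$ on a narrower sub-tube, and arrange $U_\delta$ to lie well inside the annulus $\Omega$ from Lemma \ref{cylindricalcurve}. Mollifying $\phi$ at scale $\delta^2 \ll \delta$ produces a smooth $\phi_\delta$ on $U_{\delta/2}$, and the blended function
\[\tilde\phi := \chi\,\phi_\delta + (1-\chi)\,\phi\]
is smooth on $U$ and equals $\phi$ outside $U_\delta$. Define $\Sigma'$ to be the graph of $\tilde\phi$ over $U_\delta$ and to coincide with $S$ elsewhere. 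Since $\pi(\Gamma) \subset \partial D_\rho$ with $\rho \le 17R/16$, choosing $\delta$ small enough guarantees $\Sigma' = \Sigma$ outside $B_{2R}$, and $\Sigma'$ is manifestly a smooth topological disk.

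It remains to verify that the mollification preserves the smallness of the total curvature. Standard mollifier estimates give $\phi_\delta \to \phi$ in $H^2(U_{\delta/2})$ as the mollification parameter tends to $0$, and because $|D\phi| \le C\epsilon^{1/6}$ the pointwise second fundamental form of the graph of any $C^2$ function on $U$ is controlled by its Hessian up to a factor close to $1$. Combined with Lemma \ref{biharmonicreplacement2} and the smallness assumption on the original end of $\Sigma$, this yields
\[\int_{\Sigma'} |A|^2 \, d\mathcal{H}^2 \le C\epsilon^2\]
for $\delta$ sufficiently small, where $C$ is independent of $R$ and $\epsilon$.

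The main obstacle is ensuring that the mollification does not inflate the $L^2$ norm of the Hessian across the seam. Because $\phi$ is only $C^{1,1}$ and not $C^2$ across $\pi(\Gamma)$, second derivatives could a priori concentrate there; however, the explicit $L^2$ bound on $D^2\phi$ inherited from the biharmonic estimate of Lemma \ref{biharmonicreplacement2}, together with the uniformly small slope $|D\phi| \le C\epsilon^{1/6}$, keeps the mollified Hessian uniformly close to the piecewise value. This is the key point that makes the surgery compatible with the hypotheses of Simon's graphical decomposition lemma.
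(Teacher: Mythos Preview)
Your proposal is correct and follows essentially the same approach as the paper: express the glued $C^1$ surface locally as the graph of a function in $W^{2,2}$ over an annular neighborhood of $\pi(\Gamma)$, then smooth it by blending with a mollification via a cutoff, using the $H^2$ convergence of mollifiers together with the small-slope bound $|D\phi|\le C\epsilon^{1/6}$ to retain $\int|A|^2\le C\epsilon^2$. The only cosmetic difference is that the paper mollifies $\psi v$ and writes $\tilde v=(1-\psi)v+(\psi v)_h$, whereas you mollify $\phi$ first and then cut off; both are standard and lead to the same conclusion.
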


\begin{proof}
Take a small tubular neighborhood $\mathcal{T}(\Gamma)$ of $\Gamma$ in $S$. On the outside of $\Gamma$, $\mathcal{T}(\Gamma)$ is equal to the graph of $u$ over the plane $\Pi$. On the inside of $\Gamma$, $\mathcal{T}(\Gamma)$ is equal to the graph of $w$ over $\Pi$. Let $v$ be the $C^1$ function over an annular domain $V \subset \Pi$ containing $\partial D_\rho$ such that $\textrm{graph }v = \mathcal{T}(\Gamma)$. 

First, we confirm that $v$ is in $W^{2,2}(V)$. Since $v \in C^1(V)$, we need only find weak second derivatives. Notice that $v$ is smooth away from $\partial D_\rho \subset V$. We define $D_{ij}v$ away from the measure zero set $\partial D_\rho$:
\begin{displaymath}
   D_{ij}v(x) = \left\{
     \begin{array}{lr}
       D_{ij}u & : x \in V \setminus \overline{B}_\rho \\
       D_{ij}w & : x \in V \cap B_\rho
     \end{array}
   \right.
\end{displaymath} 
Now, consider a test function $\varphi \in C_c^\infty(V)$. 
\begin{align*}
\int_{V} D_{ij}v \varphi dx &= \int_{V \setminus \overline{B}_\rho} D_{ij}u \varphi dx + \int_{V \cap B_\rho} D_{ij}w \varphi dx \\
&= - \int_{V \setminus \overline{B}_\rho} D_{i}u D_j\varphi dx - \int_{V \cap B_\rho} D_{i}w D_j\varphi dx + \int_{\partial B_\rho} (D_i u - D_i w) \nu_j dx \\
&= - \int_{V} D_{i}v D_j\varphi dx.
\end{align*}
Since $v \in C^1$, this is enough to show that the weak second derivatives we defined are valid and $v \in W^{2,2}$. Note that we used \eqref{dirichletprob2} to get rid of the last term in the second line.

We approximate $v$ in $W^{2,2}(V)$ by smooth functions. Let $\psi$ be a cutoff function that is uniformly equal to 1 in a neighborhood of $\partial D_\rho$ and compactly supported in $V$. Let $(\psi v)_h$ be the regularization of $\psi v$, with $h>0$ chosen so that $(\psi v)_h$ is also compactly supported in $V$ and 
\[||(\psi v)_h - \psi v||_{W^{2,2}(V)} < \epsilon^2. 
\]
Then the function 
\[ \tilde v = (1 - \psi) v + (\psi v)_h
\]
is equal to $v$ in a uniform neighborhood of the boundary $\partial V$ and has $||D^2 \tilde v||_{L^2(V)} < C\epsilon^2$. Since $|D\tilde{v}|$ is uniformly bounded, we have 
\[\int_{\textrm{graph }\tilde v} |A|^2 \le C \int_{V} |D^2 \tilde v|^2 < C\epsilon^2.
\]
Since $\textrm{graph }\tilde{v}$ matches $\Sigma$ in an open neighborhood of $\partial V$, the surface obtained by joining them, $\Sigma'$, is smooth and equal to $\Sigma$ outside $B_{2R}$.
\end{proof}

Choose $\epsilon> 0$ and let $Q > 0$ be a sufficiently large radius. We now prove the following lemma for $\Sigma'$. We assume that $0 \in \Sigma'$.

\begin{lem}\label{graphicalannuli}
Away from the ball $B_{C\epsilon^{1/2}Q}(0)$, the surface $\Sigma' \cap B_{Q/8}(0)$ is the graph of a function $u$ with gradient bounded by $C\epsilon^{1/6}$ over a plane $\Pi$ parallel to $V$. 
\end{lem}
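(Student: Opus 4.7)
The plan is to apply Simon's graphical decomposition lemma (Theorem \ref{simonslemma2}) directly to $\Sigma'$ in the ball $B_Q(0)$. The hypotheses are satisfied for $Q$ sufficiently large and $\epsilon$ sufficiently small: the construction preceding Lemma \ref{smoothsurgery} gives $\int_{\Sigma'} |A|^2 \le C\epsilon^2$, and the area ratio bound follows from Corollary \ref{uniformarearatios} applied to the unmodified part of $\Sigma'$ together with the $L^\infty$ control on $|D\tilde{v}|$ from the biharmonic maximum principle (which bounds the area of the smoothed replacement by $C\rho^2$). Simon's lemma produces pimples $\bar P_1, \ldots, \bar P_N$ with $\sum_m \diam(P_m) \le C\epsilon^{1/2}Q$ and, for some $S \in [Q/4, Q/2]$, expresses $\Sigma' \cap B_S$ as a union of topological disks $D_i$, each graphical with gradient $\le C\epsilon^{1/6}$ over a reference plane $L_i$ away from the pimples. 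Let $D_0$ denote the connected component containing the origin. For points $p \in D_0$ away from the pimples with $|p| \sim Q$, Corollary \ref{normperpv} gives $\mathbf{n}(p) \cdot V = O(Q^{-1/2})$, which for $Q$ large is smaller than $\epsilon^{1/6}$; combined with the Simon gradient bound, this forces the reference plane $L_0$ to be a plane $\Pi$ parallel to $V$ (up to an error absorbed into the $C\epsilon^{1/6}$ bound).

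Let $\pi: \R^3 \to \Pi$ denote orthogonal projection. To extend graphicality across any pimple $P_m$ contained in $B_{Q/8}(0) \setminus B_{C\epsilon^{1/2}Q}(0)$: since $\diam(P_m) \le C\epsilon^{1/2}Q$ and $P_m$ is separated from $0$ by at least $C\epsilon^{1/2}Q$, the projection $\pi(P_m)$ is a small subset of $\Pi$ bounded away from $\pi(0)$. Mimicking the argument in Lemma \ref{graphicalradius}, I invoke the transversality theorem together with the total diameter bound to choose a planar disk $D \subset \Pi$ enclosing $\pi(P_m)$ (of radius comparable to $C\epsilon^{1/2}Q$) such that $\pi^{-1}(\partial D) \cap \Sigma'$ is a single graphical curve contained in the Simon-graphical portion of $D_0$ and avoids every other pimple. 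The portion of $\Sigma'$ lying inside $\pi^{-1}(D)$ with this graphical boundary satisfies the hypotheses of Theorem \ref{movplanes2}: since $V \in \Pi$, reflection across planes orthogonal to $\Pi$'s normal preserves the translator equation, and the strong maximum principle of Corollary \ref{SMP} applies to the reflected surface. Moving planes then yields graphicality of this region over $D$.

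Performing this construction for each pimple in the annular region patches together a single function $u$ whose graph is $\Sigma' \cap (B_{Q/8}(0) \setminus B_{C\epsilon^{1/2}Q}(0))$. Simon's decomposition already gives $|Du| \le C\epsilon^{1/6}$ on the graphical-boundary curves surrounding each former pimple region, and the weak maximum principle for gradients (Lemma \ref{WMP}) propagates this bound to the interior of each pimple region. This establishes the claimed graphical representation with the claimed gradient bound. The main obstacle I anticipate is the combinatorial bookkeeping required to ensure that the moving-planes argument can be applied consistently across all pimples at once—i.e., that cylindrical cross-sections $\pi^{-1}(\partial D)$ can be chosen to separate individual pimples and also to avoid the boundary of $D_0$ in Simon's decomposition. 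The total diameter bound $\sum_m \diam(P_m) \le C\epsilon^{1/2}Q$ is what makes this possible: pimples are dilute and can be enclosed in disjoint small cylinders, reducing the situation to a sequence of local applications of the argument in Lemma \ref{graphicalradius}.
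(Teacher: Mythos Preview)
Your proposal follows the same overall strategy as the paper---apply Simon's lemma to $\Sigma' \cap B_Q$, tilt the reference plane to one parallel to $V$ via Corollary~\ref{normperpv}, then eliminate pimples using moving planes and the weak maximum principle---and is essentially correct. The organizational difference is that you treat pimples one at a time, enclosing each in its own small cylinder, whereas the paper handles them all at once: it selects two radii $s_1 \in [Q/4 - C\epsilon^{1/2}Q,\, Q/4]$ and $s_2 \in [r,\, r + C\epsilon^{1/2}Q]$ (where $D_r \supset \pi(\Sigma' \cap B_{2R})$) so that the cylinders $\pi^{-1}(\partial D_{s_i})$ miss every pimple, and then applies Theorem~\ref{movplanes2} and Lemma~\ref{WMP} a single time to the entire annulus between them. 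This single-annulus approach sidesteps precisely the combinatorial difficulty you flag at the end: the bound $\sum_m \diam(P_m) \le C\epsilon^{1/2}Q$ does \emph{not} guarantee that pimples are dilute or pairwise separable by disjoint cylinders---it only says the set of radii meeting some $\pi(P_m)$ has small total measure, so one can always find \emph{two} good concentric radii. The annular setup also keeps the non-translator region $\Sigma' \cap B_{2R}$ cleanly inside the inner cylinder, ensuring that moving planes is only ever applied where $\Sigma'$ actually satisfies the translator equation. Your per-pimple argument can be repaired by enclosing clusters of nearby pimples rather than individual ones, but the paper's route is cleaner and avoids the bookkeeping entirely.
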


\begin{proof}
We apply Simon's Lemma, Theorem \ref{simonslemma2}, to $\Sigma'$. Lemma \ref{graphicalradius} ensures that $\partial B_S \cap \Sigma$ (where $S \in [Q/4, Q/2]$) is an embedded, closed graphical curve bounding a connected topological disk. Away from pimples, Simon's Lemma tells us that $\Sigma' \cap B_S$ is the graph of a function $u$ defined on a domain in $\Pi$ with gradient bounded by $C\epsilon^{1/6}$. Let $\pi: \R^3 \rightarrow \Pi$ be orthogonal projection and let $R>0$ be as in Lemma \ref{smoothsurgery}. Let $D_r \subset \Pi$ be a disk such that $\pi(\Sigma' \cap B_{2R}) \subset D_r$. By the diameter bounds \eqref{diambds2}, there exists $s_1 \in [Q/4 - C\epsilon^{1/2}Q, Q/4]$ and $s_2 \in [r, r + C \epsilon^{1/2}Q]$ such that $\pi^{-1}(\partial B_{s_1})$ and $\pi^{-1}(\partial B_{s_2})$ do not intersect the pimples and intersect $\Sigma'\cap B_S$ transversely. We apply the Schoen/Alexandrov reflection method, Theorem \ref{movplanes2}, and the maximum principle, Lemma \ref{WMP}, as we did in Lemma \ref{graphicalradius} to show that the pimples lying between these cylinders are indeed graphical over $\Pi$ with gradient bounded by $C\epsilon^{1/6}$.  This completes the proof.
\end{proof}

We can now prove the main result of this section, Lemma \ref{blowdownplane2}.

\begin{proof}[Proof of Lemma \ref{blowdownplane2}]
Let us choose a sequence of annuli $B_{m} \setminus B_{1/m}$, where $m \in \mathbb{N}$, a sequence decreasing to zero $\epsilon_j \rightarrow 0$, and a family of rescalings $\lambda_i \Sigma'$, with $\lambda_i \rightarrow 0$. Let $\Sigma_j'$ be obtained for $\epsilon = \epsilon_j$ by cutting out a high curvature region and pasting in a disk as above. Note that in the inequality
\[\int_{\Sigma'_j} |A|^2 \le C\epsilon_j^2,
\]
the constant $C$ depends only on $\beta$, and is independent of $\epsilon_j$. Observe also that outside of some ball $\Sigma_j' = \Sigma_k'$, for any $j,k \in \mathbb{N}$.

Fix $m$ and $j$. By Lemma \ref{graphicalannuli}, for sufficiently large values of $i$, $\lambda_i \Sigma'$ can be written as a graph of a function over a plane $\Pi_i$ parallel to $V$ with $C^1$-norm bounded by $C\epsilon_j^{1/6}$ in $B_m \setminus B_{1/m}$. Un-fixing $j$, we see that by the compactness of the collection of planes $\{\Pi \in \textrm{Gr}_{2,3}: V \in \Pi \}$, there exists a subsequence $\lambda_{i_k}\rightarrow 0$ such that $\lambda_{i_k} \Sigma'$ converges to a fixed plane $\Pi$ inside $B_{m} \setminus B_{1/m}$ in the $C^1$-norm. 

Now, unfixing $m$, we take a further diagonal subsequence such that the rescalings of our designated end, $\lambda_{i_k} \Sigma$, converge to $\Pi \setminus \{0\}$ in the $C^1_{loc}$ topology. This concludes the proof of Lemma \ref{blowdownplane2}.
\end{proof}

Note that this blow-down limit is subsequential, and thus not necessarily unique. The problem arises from the fact that in Lemma \ref{graphicalannuli}, graphicality cannot be extended to a neighborhood of the origin with diameter proportional to the graphical radius. That is, the surface may be twisting and turning inside the ball $B_{C\epsilon^{1/6}Q}$ (in the notation of Lemma \ref{graphicalannuli}.) Thus, we must argue further in \S 4 to show that this asymptotic tangent plane is indeed unique. However, in the case that the translator $\Sigma$ has only one end, this possibility does not present a serious difficulty: Theorem \ref{oneend} is a quick corollary of Lemma \ref{blowdownplane2}.

\begin{proof}[Proof of Theorem \ref{oneend}]
Let $\{\lambda_i\Sigma\}$ be a blow-down sequence converging to $\Pi$. Fix $m \in \mathbb{N}$. Consider the sequence of annuli $\Sigma \cap B_{\lambda_{i}^{-1}m} \setminus B_{\lambda_{i}^{-1}m^{-1}}$. Each of these can be written as the graph of a function $u_i$ over $P$ with gradient $|Du_i| < C\epsilon_i^{1/6}$, where $\epsilon_i \rightarrow 0$. Consider the curve $\Sigma \cap \partial B_{\lambda_{i}^{-1}m}$ and its projection to an embedded Jordan curve in $\Pi$, which we denote $\sigma_i$. We take $\Omega$ to be the open set in $\Pi$ bounded by $\sigma_i$ and apply the method of moving planes, Theorem \ref{movplanes2} to determine that $\Sigma \cap B_{\lambda_{i}^{-1},}$ is a graph over $P$. Then we apply the weak maximum principle, Theorem \ref{WMP}, to determine that this graph has gradient $|Du| < C\epsilon_i^{1/6}$. As $m,i \rightarrow \infty$, the sequence $\Sigma \cap B_{\lambda_{i}^{-1}m}$ must be a sequence of graphical disks over $\Pi$ with increasingly small gradient. Thus, $\Sigma$ must be a plane.
\end{proof}

\section{Uniqueness of the Tangent Plane}

In this section, we establish the uniqueness of the asymptotic tangent plane and the graphicality of the ends of $\Sigma$. To prove this, we show that given two annuli in a convergent blow-down sequence, the ``interstitial space" between them must in fact be graphical with small gradient over the limit plane.

\begin{prop}\label{blowdownuniq}
Given an end of the translator $\Sigma$, the $C^1_{loc}$-limit in $\R^3 \setminus \{0\}$ of any blow-down sequence $\{\lambda_i \Sigma\}$, $\lambda_i \rightarrow 0$ is a unique plane $\Pi$ parallel to $V$. Consequently, the ends of $\Sigma$ can be written as graphs over $\Pi$ outside of some ball $B_{R_0} \subset \R^3$. 
\end{prop}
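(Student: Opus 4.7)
The plan is to upgrade the subsequential blow-down convergence from Lemma \ref{blowdownplane2} to full graphicality of the end. The key observation is that for a fixed subsequence $\lambda_i \to 0$ with $\lambda_i \Sigma \to \Pi$ in $C^1_{loc}(\R^3\setminus\{0\})$, Lemma \ref{graphicalannuli} supplies, for each $i$, an annular region $A_i := \Sigma \cap (B_{R\lambda_i^{-1}} \setminus B_{R^{-1}\lambda_i^{-1}})$ that is graphical over $\Pi$ with gradient bounded by $C\epsilon_i^{1/6}$, $\epsilon_i \to 0$. If I can show that the ``interstitial'' annulus $I_i := \Sigma \cap (B_{R^{-1}\lambda_{i+1}^{-1}}\setminus B_{R\lambda_i^{-1}})$ between consecutive scales is also a graph over $\Pi$ with small gradient, then patching everything for $i \geq i_0$ gives a single graph over $\Pi$ outside some ball $B_{R_0}$; this graph representation is intrinsic to $\Sigma$, so any other subsequential limit plane $\Pi'$ must equal $\Pi$.

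To handle $I_i$, I follow the dichotomy flagged in the introduction. First, by Huber's theorem and the one-ended topology of each end, I may assume $I_i$ is a topological annulus with boundary equal to the outer boundary of $A_i$ and the inner boundary of $A_{i+1}$, both of which are graphical Jordan curves over planar Jordan curves in $\Pi$ of small slope. Case (1): if these two graphical pieces lie on compatible sides of $\Pi$ (same ``orientation''), then $I_i$ is contained in a slab $\Omega \times \R$ with $\partial I_i$ graphical over $\partial\Omega$, so Theorem \ref{movplanes2} (Schoen's moving planes) applies, showing $I_i$ is a graph over $\Omega \subset \Pi$, and then Lemma \ref{WMP} propagates the $C\epsilon_i^{1/6}$ bound on $|Du|$ from the boundary to the interior. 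Case (2): suppose the two boundary components lie on opposite sides of $\Pi$, so that $I_i$ ``flips'' across $\Pi$. I rule this out by Gauss–Bonnet: each boundary curve is close to a nearly planar circle, so its geodesic curvature integrates to a value close to $\pm 2\pi$; if both orientations appear, the boundary term in $\int_{I_i} K + \int_{\partial I_i} \kappa_g = 2\pi \chi(I_i)$ forces $\int_{I_i} |K|$ (and hence $\int_{I_i}|A|^2$) to be uniformly bounded below by a positive constant depending only on the slope bounds, contradicting the choice $\int_{\Sigma\setminus B_{R_\epsilon}}|A|^2 < \epsilon_i^2 \to 0$.

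With graphicality of every interstitial region established, the union
\[
\Sigma \setminus B_{R_0} \;=\; \bigcup_{i \geq i_0} (A_i \cup I_i)
\]
is a single smooth graph over $\Pi$ outside $B_{R_0}$. This yields the second claim of the proposition directly, and uniqueness of the tangent plane follows: if $\{\lambda'_k\Sigma\}$ were another blow-down sequence converging to some plane $\Pi' \neq \Pi$, then for $k$ large the annulus $\lambda'_k \Sigma \cap (B_2\setminus B_{1/2})$ would simultaneously be a small-gradient graph over $\Pi$ (from the global graph description) and over $\Pi'$, which is impossible once the angle between $\Pi$ and $\Pi'$ exceeds the gradient bound.

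The main obstacle I anticipate is Case (2): carefully quantifying the geodesic curvature of the two boundary curves of $I_i$ as \emph{intrinsic} curves on $\Sigma$ (not merely as curves in $\R^3$), and converting the resulting integral into a lower bound on $\int_{I_i}|A|^2$ that is uniform in $i$. The subtlety is that one must use the smallness of $\mathbf{n}\cdot V$ from Corollary \ref{normperpv} and the fact that each boundary component is close to a round circle in a translate of $\Pi$ to extract a definite $2\pi$-contribution with the wrong sign. A secondary technicality is verifying, via Huber's theorem and unit multiplicity of the end, that $I_i$ really is a topological annulus rather than a more complicated surface, so that Gauss–Bonnet can be applied in its clean form.
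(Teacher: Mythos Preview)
Your proposal is correct and follows essentially the same route as the paper: moving planes for the ``good'' interstitial case, and a Gauss--Bonnet/geodesic-curvature contradiction for the orientation-flip case, followed by the weak maximum principle to propagate gradient bounds and a squeeze argument for uniqueness. Two small refinements you will need when filling in details: (i) your containment $I_i \subset \Omega \times \R$ in Case~(1) is not automatic, since even with compatible orientations the surface could dip back inside the inner cylinder $\sigma_j \times \mathbf{n}\R$; the paper handles this by a separate moving-planes argument showing such a re-entrant piece cannot be non-graphical, and then proves that a graphical re-entrant piece forces the orientation flip of Case~(2); (ii) for the Gauss--Bonnet step the paper first selects, via the coarea formula as in Lemma~\ref{cylindricalcurve}, cylindrical boundary curves $\Gamma_j,\Gamma_{j+1}$ with $\int_{\Gamma}|A|^2\,d\mathcal{H}^1 \le C\epsilon^2/\rho$, which is what makes the geodesic-curvature computation clean.
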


\begin{proof} Let $\epsilon>0$ and $\Sigma'$ be the modified surface from the previous section: a complete embedded topological disk, equal to the translator $\Sigma$ outside a ball $B_{R}$ and with total curvature less than $\epsilon^2$. Consider a blow-down sequence $\{\lambda_i \Sigma'\}$, with $\lambda_i \rightarrow 0$ that converges to a plane $\Pi$, and take $\lambda_j > \lambda_{j+1}$. Let $R>0$ be a large fixed radius, and let $j$ be large enough that $\Sigma' \cap B_{\lambda_j^{-1}R} \setminus B_{\lambda_j^{-1}R^{-1}}$ and $\Sigma' \cap B_{\lambda_{j+1}^{-1}R} \setminus B_{\lambda_{j+1}^{-1}R^{-1}}$ can be written as graphs of functions $u_j$ and $u_{j+1}$ over annuli in $\Pi$ with gradient bounded by $C\epsilon^{1/6}$. Let us also define the curves $\gamma_j$ and $\gamma_{j+1}$ which will denote the inner boundary $\Sigma' \cap \partial B_{\lambda_j^{-1}R^{-1}}$ and the outer boundary $\Sigma' \cap \partial B_{\lambda^{-1}_{j+1}R}$ respectively. 

If we project the space curves $\gamma_j$ and $\gamma_{j+1}$ to $\Pi$ via the orthogonal projection map $\pi: \R^3 \rightarrow \Pi$, we obtain simple closed planar curves $\sigma_j = \pi(\gamma_j)$ and $\sigma_{j+1} = \pi(\gamma_{j+1})$. Equivalently, $u_j(\sigma_j) = \gamma_j$ and $u_{j+1}(\sigma_{j+1})= \gamma_{j+1}$. Thus, $\sigma_j$ and $\sigma_{j+1}$ bound a large topological annulus $\Omega$ in $\Pi$. Let $\mathbf{n}$ be the normal vector to $\Pi$. Consider the solid cylinder $\Omega \times \mathbf{n}\R$. Suppose that $(\Sigma' \cap B_{\lambda_{j+1}^{-1}R}\setminus B_{\lambda_j^{-1}R^{-1}}) \cap (\sigma_j \times \mathbf{n}\R) = \gamma_{j}$. Then, $\Sigma' \cap (B_{\lambda_{j+1}^{-1}R} \setminus B_{\lambda_{j}^{-1}R^{-1}})$ is contained inside $\Omega \times \mathbf{n}\R$, and since the boundary curves $\gamma_{j}$ and $\gamma_{j+1}$ are graphs, we can apply the moving planes method, Theorem \ref{movplanes2}, to find that $\Sigma' \cap (B_{\lambda_{j+1}^{-1}R} \setminus B_{\lambda_{j}^{-1}R^{-1}})$ is a graph over $\Omega$.

Suppose that $(\Sigma' \cap B_{\lambda_{j+1}^{-1}R}) \cap (\sigma_j \times \mathbf{n}\R) \supsetneq \gamma_{j}$, i.e. the surface turns back and intersects the cylinder of small radius. Consider the domain $\mathcal{D} = \mathrm{int}(\sigma_j)$ in the plane $\Pi$ such that $\pi(\Sigma' \cap B_R) \subset \mathcal{D}$ and $D\mathcal{D}$ is bounded by the Jordan curve $\sigma_j$. We separate this situation into two cases. These cases are illustrated in Figure \ref{fig1} and Figure \ref{fig2} in Appendix \ref{figs}.

\textbf{Case 1:} $\Lambda := (\Sigma' \cap B_{\lambda_{j+1}^{-1}R} \setminus B_{\lambda_{j}^{-1}R^{-1}}) \cap (\mathcal{D} \times \mathbf{n}\R)$ is non-graphical or a multigraph over $D_R$ in either of the connected components of $(\mathcal{D} \times \mathbf{n}\R) \setminus B_{\lambda_{j}^{-1}R^{-1}}$. Without loss of generality, let us assume this occurs in the upper half space $\Pi \times \R_{\ge 0}$. In this case, we may use a modified method of moving planes. Consider the surface $\Sigma' \cap B_{\lambda_{j+1}^{-1} R}$ (a topological disk) contained in the cylinder $\textrm{int}(\sigma_{j+1}) \times \mathbf{n}\R$ with boundary equal to $\gamma_{j+1}$. If $(x_1,x_2)$ are coordinates in $\Pi$ and $x_3$ indicates height in $\mathbf{n}\R$, we take the plane $\{x_3= t\}$ parallel to $\Pi$ and the associated surfaces $\tilde {\Sigma'}_t^+$ and ${\Sigma'}_t^-$ as in the proof of Theorem \ref{movplanes2}. 

Let $h$ be the minimum height of $\Lambda$ in the upper half space $x_3 >0$. Note that
\[h > \frac{\lambda_{j}^{-1}R^{-1}}{2} > R,\]
where we may assume without loss of generality that $\lambda_j^{-1}$ is sufficiently large that the second inequality holds. The first inequality follows from repeated applications of Lemma \ref{graphicalannuli}, which tells us that once $\Sigma'$ leaves the ball $B_{\lambda_j^{-1}R^{-1}}$, it will never re-enter.

If a first point of interior tangency occurs, it must occur for $t > h$. This means that it occurs outside of $B_{R}$ and around this point of contact $\tilde {\Sigma'}_t^+$ and ${\Sigma'}_t^-$ must satisfy the translator equation. Thus, the strong maximum principle applies. This is a contradiction, so the first point of contact must occur on the boundary curve $\gamma_{j+1}$. However, $\gamma_{j+1}$ is a graph over $\Pi$, so there is no point of contact outside $\{x_3 = t\}$ on the boundary. Thus, the orthogonal projection $\pi(\Lambda)$ to $\Pi$ must be one-to-one, so Case 1 cannot occur.

\textbf{Case 2:} $\Lambda := (\Sigma' \cap B_{\lambda_{j+1}^{-1}R} \setminus B_{\lambda_{j}^{-1}R^{-1}}) \cap (\mathcal{D} \times \mathbf{n}\R)$ is a single-valued graph over $U$. Note that the plane $\Pi$ is spanned by two orthogonal vectors $\partial_{x_1} = V$ and $\partial_{x_2}$ and is normal to the coordinate vector $\partial_{x_3}$. Let us take the cross section of $\Sigma' \cap B_{\lambda_{j+1}^{-1}R} \setminus B_{\lambda_{j}^{-1}R^{-1}}$ with respect to the plane spanned by $\partial_{x_2}$ and $\partial_{x_3}$ passing through the origin. Since the normal part of $V$ has norm $|V^\perp| = O(\varrho^{-1/2})$ by Corollary \ref{normperpv}, the intersection with this plane is transverse for sufficiently large values of $\lambda_j^{-1}$. This intersection can be seen to have two connected components which can be distinguished by whether they intersect the inner boundary curve on the left or the right of the line $(x_2, x_3) \in \{0\} \times \R$. Without loss of generality, take the component that intersects $\gamma_j$ on the left. This curve is an embedded curve with boundary in $\gamma_j$ and $\gamma_{j+1}$, which we henceforth denote by $\eta$.

We now describe some properties of the curve $\eta$. Let $p_j$ and $p_{j+1}$ be the unique intersection points of $\eta$ with $\gamma_j$ and $\gamma_{j+1}$: these are the endpoints of $\eta$. Let $\ell$ be the length of $\eta$. We parametrize $\eta$ by arclength such that 
\[\eta(0) = p_j,\;  \eta(\ell) = p_{j+1},\text{ and }\langle \eta'(0), \partial_{x_2} \rangle = -1 + C\epsilon^{1/6}. \]
By assumption, $\eta$ crosses the line $\{0\} \times \R$ exactly once. We wish to use this condition to determine whether $\langle \eta'(\ell), \partial_{x_2} \rangle$ is close to 1 or -1. First note that 
\[\eta \cap B_{\lambda_{j+1}^{-1}R} \setminus  B_{\lambda_{j+1}^{-1}R^{-1}}
\]
is much longer than 
\[\eta \cap B_{\lambda_{j}^{-1}R^{-1}} \setminus  B_{\lambda_{j}^{-1}R^{-1}}.
\]
We then consider the two cases (1) $p_{j+1}$ is on the left of vertical axis $\{0\} \times \R$ and (2) $p_{j+1}$ is on the right of this axis. In case (1), $\eta$ must cross $\{0\} \times \R$ an even number of times, which cannot happen. Thus, (2) must hold. If $\langle \eta'(s), \partial_{x_2} \rangle \approx -1$ on $\eta \cap B_{\lambda_{j+1}^{-1}R} \setminus  B_{\lambda_{j+1}^{-1}R^{-1}}$, integration tells us that $p_{j+1}$ must be on the left of $\{0\} \times \R$, which is impossible. Thus, $\langle \eta'(\ell), \partial_{x_2} \rangle \approx 1$ in all situations. In particular, $\langle \eta'(\ell), \eta'(0) \rangle \approx -1$.

Let $\Sigma' \cap B_{\lambda_{j}^{-1}R} \setminus  B_{\lambda_{j}^{-1}R^{-1}}$ and $\Sigma' \cap B_{\lambda_{j+1}^{-1}R} \setminus  B_{\lambda_{j+1}^{-1}R^{-1}}$ be parametrized by 
\[X_j(x_1, x_2) = (x_1, x_2, u_j(x_1,x_2))\]
and
\[X_{j+1}(x_1, x_2) = (x_1, x_2, u_{j+1}(x_1,x_2))\]
respectively. The two surfaces are oriented by the respective 2-vectors
\[\partial_{x_1} X_{j} \wedge \partial_{x_2} X_{j} \text{ and } \partial_{x_1} X_{j+1} \wedge \partial_{x_2} X_{j+1}.
\]
Since these two surfaces are graphical annuli with very small gradient, their orientation 2-vectors are close to 
\[V \wedge \eta'(0) \text{ and } V \wedge \eta'(\ell),
\]
respectively. Note that at all points $\eta(s)$, $V$ is ``almost" a tangent vector to $\Sigma$ perpendicular to $\eta'(s)$, by Corollary \ref{normperpv}.  Since $\langle \eta'(\ell), \eta'(0) \rangle \approx -1$, we can deduce that
\[(\langle \partial_{x_1} X_{j} \wedge \partial_{x_2} X_{j})(p), (\partial_{x_1} X_{j+1} \wedge \partial_{x_2} X_{j+1})(q) \rangle < 0,
\]
for any $p \in \Sigma' \cap B_{\lambda_{j}^{-1}R} \setminus  B_{\lambda_{j}^{-1}R^{-1}}$ and $q \in \Sigma' \cap B_{\lambda_{j+1}^{-1}R} \setminus  B_{\lambda_{j+1}^{-1}R^{-1}}$. Equivalently, the normal vectors to the two annuli determine opposite orientations with respect to a fixed frame on $\R^3$. The crucial consequence of this fact is that the geodesic curvatures along curves ``nearby" $\gamma_j$ and $\gamma_{j+1}$ will have the same sign and be approximately equal to $2\pi$. We demonstrate this precisely in the remainder of the proof.

Next, we show that this change in orientation causes the total curvature to be large. By the same reasoning as in \eqref{coareacurve}, we can find curves $\Gamma_j$ and $\Gamma_{j+1}$ in $\Sigma' \cap (B_{\lambda_{j}^{-1}R} \setminus B_{\lambda_{j}^{-1}R^{-1}})$ and $\Sigma' \cap (B_{\lambda_{j+1}^{-1}R} \setminus B_{\lambda_{j+1}^{-1}R^{-1}})$ respectively such that
\[\pi(\Gamma_j) = \partial D_{\rho_j},\; \pi(\Gamma_{j+1}) = \partial D_{\rho_{j+1}} \text{ for some }\rho_j, \rho_{j+1} > 0,
\]
and such that
\[ \int_{\Gamma_j} |A|^2 d\mathcal{H}^1 \le \frac{2\epsilon^2}{\rho_j} \text{ and } \int_{\Gamma_{j+1}} |A|^2 d\mathcal{H}^1 \le \frac{2\epsilon^2}{\rho_{j+1}}.
\]

We consider the annular subset of $\Sigma' \cap (B_{\lambda_{j+1}^{-1}R} \setminus B_{\lambda_{j}^{-1}R^{-1}})$ bounded by the curves $\Gamma_j$ and $\Gamma_{j+1}$, which we call $S(\Gamma_j, \Gamma_{j+1})$. We apply the Gauss-Bonnet theorem:
\[\int_{S(\Gamma_j, \Gamma_{j+1})} K = 2\pi (2M_1 - 2g - M_0) - \int_{\partial S(\Gamma_j, \Gamma_{j+1})} \kappa_g,
\]
where $\kappa_g$ is the geodesic curvature, $M_1$ is equal to the number of connected components, $g$ is the genus, and $M_0$ is equal to the number of boundary components. Substituting $M_1 = 1,  M_0 = 2$ and $g = 0$, we obtain
\[\int_{\Sigma' \cap (B_{\lambda_{j+1}^{-1}R} \setminus B_{\lambda_{j}^{-1}R^{-1}})} K = - \int_{\Gamma_j} \kappa_g - \int_{\Gamma_{j+1}} \kappa_g.
\]
\begin{clm}\label{smallgeodcurv}
\[\int_{\Gamma_j}\kappa_g \;\;,\;\; \int_{\Gamma_{j+1}}\kappa_g \approx 2\pi
\]
\end{clm}
\begin{proof}
We apply similar reasoning to a proof on p.294-5 of \cite{S}. We parametrize $\Gamma_j$ and $\Gamma_{j+1}$ as follows.
\begin{equation} \label{gammaparam} \Gamma_j(\theta) = \rho_j e^{i\theta} + u_j(\rho_j e^{i\theta})\mathbf{n},
\end{equation}
and
\begin{equation} \label{gammaparam2} \Gamma_{j+1}(\theta) = \rho_{j+1} e^{i\theta} + u_j(\rho_{j+1} e^{i\theta})\mathbf{n},
\end{equation}
where $\theta \in [0,2\pi)$ and $\Pi$ is identified with $\C$. We calculate the integral
\begin{align*}\int_0^{2\pi} |\nabla^2 u(ie^{i\theta},ie^{i\theta})||\Gamma'(\theta)| d\theta &= \int_{\Gamma}|\nabla^2 u(ie^{i\theta},ie^{i\theta})| d\mathcal{H}^1 \\
& \le (\textrm{length}(\Gamma))^{1/2}\bigg(\int_\Gamma |\nabla^2 u|^2 d\mathcal{H}^1 \bigg)^{1/2} \\
& \le \rho^{1/2} \bigg( \frac{2\epsilon^2}{\rho} \bigg)^{1/2} \le 2\epsilon.
\end{align*}
Differentiating \eqref{gammaparam} and \eqref{gammaparam2}, and combining with the above calculation, we obtain that the total curvature vector $\vec{\kappa}$ can be writted in the following form:
\[\vec{\kappa} = \rho^{-1}\nu + E,
\]
where $E$ is a vector field on $\Gamma$ such that $\int_{\Gamma} |E| < C \epsilon^{1/6}$ and $\nu$ is the inward pointing unit normal to $\Gamma$ with respect to the surface $S(\Gamma_j, \Gamma_{j+1})$. Since
\[(\langle \partial_{x_1} X_{j} \wedge \partial_{x_2} X_{j})(p), (\partial_{x_1} X_{j+1} \wedge \partial_{x_2} X_{j+1})(q) \rangle < 0,
\]
for any $p \in \Sigma' \cap B_{\lambda_{j}^{-1}R} \setminus  B_{\lambda_{j}^{-1}R^{-1}}$ and $q \in \Sigma' \cap B_{\lambda_{j+1}^{-1}R} \setminus  B_{\lambda_{j+1}^{-1}R^{-1}}$, the geodesic curvatures $(\kappa_j)_g$ and $(\kappa_{j+1})_g$ have the same sign. We calculate
\begin{align*} \int_{\Gamma} \kappa_g &= \int_{\Gamma} \vec{\kappa} \cdot \nu = \int_0^{2\pi} (\rho + E\cdot \nu)|\rho d\theta\\
&= \int_0^{2\pi}\rho \rho^{-1} d\theta + \int_\Gamma E\cdot \nu\\
\end{align*}
\[\implies \bigg| \int_{\Gamma} \kappa_g - 2\pi \bigg| \le C\epsilon^{1/6}.
\]
This concludes the proof of the claim. 
\end{proof}

Claim \ref{smallgeodcurv} and the Gauss-Bonnet Theorem imply that 
\[\frac{3}{2}\int_{\Sigma' \cap (B_{\lambda_{j+1}^{-1}R} \setminus B_{\lambda_{j}^{-1}R^{-1}})} |A|^2 \ge 4\pi - \delta
\]
for some small $\delta >0$. Given the assumption of small total curvature, this is contradiction. Thus, Case 2 does not occur and $\Sigma' \cap (B_{\lambda_{j+1}^{-1}R} \setminus B_{\lambda_{j}^{-1}R^{-1}})$ must be the graph of a function $w_j$ over $\Omega \subset \Pi$. Since $|Dw_j| < C\epsilon^{1/6}$ on $\gamma_j$ and $\gamma_{j+1}$, the weak maximum principle implies that $|Dw_j|< C\epsilon^{1/6}$ on $\Omega$. 

Now, consider two blow down sequences $\{\mu_i \Sigma\}$ and $\{\lambda_j \Sigma\}$, with $\lambda_j, \mu_i \rightarrow 0$ and $\{\lambda_j \Sigma\}$ has blow-down limit equal to the plane $\Pi$. For any sufficiently large $\mu_i$, up to a subsequence of $\{ \lambda_j \}$, we can find $\lambda_j > \mu_i > \lambda_{j+1}$. The annular set $\Sigma' \cap (B_{\mu_i^{-1} R} \setminus B_{\mu_i^{-1} R^{-1}})$ is contained in the larger annulus $\Sigma' \cap (B_{\lambda_{j+1}^{-1}R} \setminus B_{\lambda_{j}^{-1}R^{-1}})$, which must be a graph over $\Pi$ with gradient bounded by $C\epsilon_j^{1/6}$. Now that $\Sigma' \cap (B_{\mu_i^{-1} R} \setminus B_{\mu_i^{-1} R^{-1}})$ can be written as a graph with gradient bounded by $C\epsilon_j^{1/6}$ over $\Pi$, we see that in fact the blow-down limit of $\{\mu_i \Sigma\}$ must be the plane $\Pi$ as well. 

In fact, this tells us that the entire end can be written as the graph of of a function $u$ over $\Pi$ with $|u| = o(\varrho)$ and $|Du| = o(1)$. That each end is a graph over the same plane $\Pi$ is a consequence of embeddedness.
\end{proof}

\section{Strong Asymptotics of the Ends}

In this section, we obtain upper bounds for the decay of an end of $\Sigma$ to its asymptotic plane. We begin by using a barrier argument to obtain unidirectional exponential decay of the ends. This is stated precisely in the following proposition:

\begin{prop}\label{UHPdecay}
Let $\Sigma_0$ be an end of a translator $\Sigma \subset \R^3$ that translates with unit speed in the $x_1$-direction and has finite total curvature. There exists a function $u: \R^2 \setminus B_R \rightarrow \R$ whose graph represents $\Sigma_0$ and whose magnitude decays uniformly at a rate $O(e^{-\alpha x_1})$ for any positive $\alpha < 1/2$ as $x_1 \rightarrow \infty$.  
\end{prop}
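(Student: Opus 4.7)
The plan is a comparison argument built on an explicit exponential supersolution. By Proposition~\ref{blowdownuniq}, after rotating so that $V = e_1$ and the limit plane $\Pi = \{x_3 = 0\}$, the end $\Sigma_0$ is the graph of a function $u : \R^2 \setminus B_R \to \R$ with $|Du| = o(1)$ and $|u| = o(\varrho)$. Clearing denominators in \eqref{translatoreqn}, $u$ satisfies
\[
Q[u] := (1 + |Du|^2)(\Delta u + D_1 u) - D_i u\, D_j u\, D_{ij} u = 0,
\]
which for small $|Du|$ is a quasilinear perturbation of the linear drift operator $\Delta + \partial_{x_1}$.

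For each $\alpha \in (0, 1/2)$ I would set $\beta := \sqrt{\alpha(1-\alpha)}$ (so that $\beta > \alpha$) and use the candidate barrier
\[
\varphi(x_1, x_2) := C\, e^{-\alpha x_1}\cosh(\beta x_2).
\]
A direct computation gives $\Delta \varphi + D_1 \varphi \equiv 0$, while $D_i \varphi\, D_j \varphi\, D_{ij}\varphi > 0$, so $Q[\varphi] < 0$ and $\varphi$ is a strict supersolution of the translator equation. To exploit this as an upper barrier, I would compare $u$ and $\varphi$ on a large cone $\Omega_K := \{x_1 \ge R,\ |x_2| \le K x_1\}$ with $K > \sqrt{\alpha/(1-\alpha)}$, so that $K\beta > \alpha$. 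On the inner face $\{x_1 = R\}$, $|u|$ is bounded on the compact cross-section and $C$ can be chosen large enough that $\varphi \ge |u|$ there. On the lateral face $\{|x_2| = Kx_1\}$, $\varphi \sim (C/2)\, e^{(K\beta - \alpha) x_1}$ grows exponentially and therefore dominates the at-most-linear growth $|u| = o(\varrho)$. Applying Lemma~\ref{lineardiff} to the difference $v = u - \varphi$ (and separately to $-u - \varphi$) linearizes the comparison, and a Phragm\'{e}n--Lindel\"{o}f argument for the resulting linear uniformly elliptic drift operator on the unbounded cone, combined with the growth $|u| = o(\varrho)$, yields $|u| \le \varphi$ throughout $\Omega_K$. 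Taking $K \to \infty$ and restricting to any strip of bounded $|x_2|$ produces the claimed $|u(x_1, \cdot)| = O(e^{-\alpha x_1})$ decay; running the same argument on $-u$ closes the two-sided bound.

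The main obstacle will be controlling $u$ at infinity on the unbounded cone well enough to invoke Phragm\'{e}n--Lindel\"{o}f: standard versions of that principle require a definite subexponential growth bound, and the only such bound in hand from Proposition~\ref{blowdownuniq} is $|u| = o(\varrho)$, which is only just strong enough. I would sharpen this if necessary by integrating the improved half-space curvature decay $|A| = O(\varrho^{-1})$ from Lemma~\ref{curvdecay} together with the bound $|D_1 u| = O(\varrho^{-1/2})$ following from Corollary~\ref{normperpv}, to obtain $|u| = O(\sqrt{\varrho})$ along the $x_1$-direction, which safely lies below the critical growth rate $e^{x_1/2}$ for $\Delta + \partial_{x_1}$ that Phragm\'{e}n--Lindel\"{o}f requires. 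The restriction $\alpha < 1/2$ enters precisely through $\beta = \sqrt{\alpha(1-\alpha)} > \alpha$, which is exactly what allows the lateral face of the cone (where $\varphi$ grows at rate $K\beta - \alpha > 0$) to be the dominant boundary piece and to absorb the comparison in the unbounded limit.
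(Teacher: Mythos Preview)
Your barrier $\varphi = Ce^{-\alpha x_1}\cosh(\beta x_2)$ is a nice choice and your computation that $Q\varphi<0$ is correct, but the Phragm\'en--Lindel\"of step is where the argument breaks, and not for a technical reason that can be patched by sharpening the growth bound on $u$. Consider the function $u\equiv K$ for a nonzero constant $K$: this is a genuine solution of the translator equation over $\Pi$ (a parallel flat plane), it satisfies $|u|=O(1)$, which is far better than the $o(\varrho)$ or $O(\sqrt{\varrho})$ bounds you propose to feed into Phragm\'en--Lindel\"of, and yet $v=K-\varphi$ is bounded, satisfies $(\Delta+\partial_{x_1})v=0$, is $\le 0$ on the lateral faces and (for large $C$) on the inner face of your cone, but tends to $K>0$ along the axis $x_2=0$ as $x_1\to\infty$. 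So the comparison $u\le\varphi$ is simply false in this example, and no Phragm\'en--Lindel\"of principle with a growth hypothesis weaker than decay can hold for $\Delta+\partial_{x_1}$ on a cone opening in the $+x_1$ direction. The drift points \emph{into} the cone, which is exactly the orientation in which the operator behaves like a backward parabolic equation and the boundary data at finite $x_1$ cannot control the behavior at $x_1=+\infty$.

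The paper sidesteps this obstruction by applying the barrier not to $u$ but to $v^2=(D_1u)^2$. The crucial gain is that $|Du|=o(1)$ is already known from Proposition~\ref{blowdownuniq}, so $v^2\to 0$ at infinity in every direction. One then compares $v^2$ (a subsolution of the linearized operator $L$ from Lemma~\ref{WMP}) with $Ce^{-x_1/2}+\epsilon$ on bounded half-disks $\mathcal D_S$ using only the ordinary weak maximum principle; the far boundary is handled by the $\epsilon$ shift together with $v^2\to 0$, and then $S\to\infty$, $\epsilon\to 0$. This yields $|D_1u|\le Ce^{-x_1/4}$, and integrating along rays in the $x_1$ direction shows that $u$ converges uniformly to a single constant $K$ with $|u-K|\le Ce^{-x_1/4}$. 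The passage through $D_1u$ is not a convenience but the essential idea: it converts a quantity with unknown behavior at infinity into one known to decay, which is precisely what your direct comparison lacks.
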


\begin{proof}
To prove this statement, we apply a barrier argument to the $x_1$-derivative of $u$, $D_{1}u$. We set $v:= D_{1}u$. By the proof of Lemma \ref{WMP}, $v = D_{1}u$ satisfies a differential equation of the form $Lv = 0$, where the differential operator $L = a_{jk}D_{jk} + b_j D_j$ has coefficients given by
\[a_{jk} = (1 + |Du|^2)\delta_{jk} - D_j u D_k u 
\]
\[b_j = (1+|Du|^2)\delta_{1j} - 2D_kuD_{jk}u  - \bigg(\Delta u + D_1u - \frac{3D_kuD_l u D_{kl} u}{1+|Du|^2}\bigg)D_ju. 
\]
We now calculate $L(v^2)$.
\begin{align*}
L(v^2) &= a_{ij}D^2_{ij}(v^2) + b_j D_j (v^2) \\
&= a_{ij}(D_i(2vD_jv)) + 2 v b_j D_j v \\
&= 2a_{ij}(D_iv D_jv) + 2va_{ij}D_{ij}^2v + 2vb_j D_j v \\
&= 2a_{ij}(D_iv D_jv)\\
&= 2(1+ |Du|^2)|Dv|^2 + 2D_iu D_j u (D_i v D_j v) \\
&= 2(1+ |Du|^2)|Dv|^2 + 2(\langle Du, Dv \rangle)^2.
\end{align*}
Thus,
\begin{align*}
L(v^2) &\ge 2(1+ |Du|^2)|Dv|^2\\
&\ge 0.
\end{align*}
We now choose our barrier function, $e^{-\frac{x_1}{2}}$. We calculate $L\big(e^{-\frac{x_1}{2}}\big)$:
\begin{align*}
L\big(e^{-\frac{x_1}{2}}\big) &= a_{11}D_{11}^2 \big(e^{-\frac{x_1}{2}}\big)  + b_1D_1\big(e^{-\frac{x_1}{2}}\big). 
\end{align*}
We calculate the coefficients $a_{11}$ and $b_1$ here:
\[a_{11}= (1+|Du|^2) - |D_1 u|^2 = 1 + |D_2 u|^2
\]
\[b_1 =  (1+|Du|^2) - |D_1u|^2 - 2D_ku D_{1k} u - \bigg( \Delta u - \frac{3D^2u(Du, Du)}{1+ |Du|^2} \bigg)D_1 u
\]
We now rely on the fact that $||Du||_{C^{k}} = o(1)$ as $\rho \rightarrow \infty$ in order to estimate the lower order terms for large values of $x_1$. 
\begin{align*}
L\big(e^{-\frac{x_1}{2}}\big) &= a_{11}\frac{e^{-\frac{x_1}{2}}}{4}  - b_1\frac{e^{-\frac{x_1}{2}}}{2} \\
&= (1 + |D_2 u|^2)\bigg(\frac{e^{-\frac{x_1}{2}}}{4}- \frac{e^{-\frac{x_1}{2}}}{2} \bigg)  \\
& \;\;\;\;\;\;\;\;\;\;\;\;\;\;\;\;\;\;  +\frac{e^{-\frac{x_1}{2}}}{2} \bigg( 2D_ku D_{1k} u + D_1u \Delta u - D_1 u\frac{3D^2u(Du, Du)}{1+ |Du|^2}\bigg) \\
&= -\frac{e^{-\frac{x_1}{2}}}{4} + e^{-\frac{x_1}{2}}o(1) \\
& \le -\frac{e^{-\frac{x_1}{2}}}{8} < 0,
\end{align*}
for $x_1$ sufficiently large. Let $M>0$ be such that the previous inequality holds for $x_1 \ge M$. Let $C$ be such that $Ce^\frac{-M}{2} > D_1u$ on the line $\{x_1 = M\}$. Note that such a $C$ must exist, since $|Du| = o(1)$ as the radius approaches infinity, and is in particular uniformly bounded. Now for some $\epsilon > 0$, define the function $\rho_\epsilon = Ce^{-\frac{x_1}{2}} + \epsilon$. Since there are no order zero terms, $L\rho_\epsilon < 0$ for $x_1 \ge M$. Thus,
\[L(\rho_\epsilon - v^2) < 0.
\]
Consider the semicircular domains $\mathcal{D}_S \subset \R^2$ parametrized by $S>0$ defined by the intersection of the balls $D_S(p)$, of radius $S$ and centered around $p = (x_1,x_2) = (M, 0)$, and the upper half-plane $\{x_1 > M\}$. Since $|Du| = o(1)$ as the radius approaches infinity, for each $\epsilon >0$, there exists some $S>0$ such that $\rho_\epsilon > v^2$ on the boundary $\partial \mathcal{D}_S$. The weak maximum principle \cite[Theorem 3.1]{GT} tells us that $\rho_\epsilon > v^2$ on $\mathcal {D}_S$. Sending $S \rightarrow \infty$ and then $\epsilon \rightarrow 0$, we see that $v^2 < C\epsilon^{-\frac{x_1}{2}}$ in the upper half-plane.

This in turn implies that $|D_1u| \le Ce^{-\frac{x_1}{4}}$ on $\{x_1 \ge M\}$. Integrating along rays in the direction of motion, we can conclude that $u$ approaches a limit along each ray parallel to the direction of motion. Fix real numbers $y$ and $\hat{y}$ and suppose that 
\[\lim_{x_1 \rightarrow \infty} u(x_1,y) = K, \; \; \lim_{x_1 \rightarrow \infty} u(x_1,\hat{y}) = \hat{K}.
\]
Since $|Du| = o(1)$, there exists a radius $R$ such that on $\R^2 \setminus B_R$, 
\[|Du| < \frac{|K-\hat{K}|}{2|y-\hat{y}|}.
\]
Thus, for sufficiently large values of $x_1$, $2|u(x_1,y) - u(x_1, \hat{y})| < |K-\hat{K}|$ only if $|K-\hat{K}| > 0$. This is a contradiction, so $K = \hat{K}$. Now, fix $x_2 = y$, for any $y \in \R$. The following formula holds:
\[\int_{x_1}^\infty D_1u(\xi, y) d \xi = K - u(x_1, y).
\]
By our bound $|D_1u| \le Ce^{-\frac{x_1}{4}}$,
\[u(x_1,y) \le K + \int_{x_1}^\infty Ce^{-\frac{\xi}{4}} d\xi \le K + Ce^{-\frac{x_1}{4}},
\]
and
\[u(x_1,y) \ge K - \int_{x_1}^\infty Ce^{-\frac{\xi}{4}} d\xi \ge K - Ce^{-\frac{x_1}{4}}.
\]
Since $y \in \R$ is arbitrary, this shows that $K \pm Ce^{-\frac{x_1}{4}}$ are upper and lower barriers for $u$, respectively. In particular, $u$ decays uniformly along rays parallel to $\nabla x_1$.
\end{proof}

Proposition \ref{UHPdecay} allows us to use grim hyperplanes and tilted grim hyperplanes as barriers for the ends. We obtain a number of immediate corollaries from this observation.

\begin{cor}\label{raydecay}
Along any ray in the coordinate plane $P$ not pointing in the direction $-V$, the magnitude of the graph $u$ decays exponentially.
\end{cor}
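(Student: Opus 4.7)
The plan is to adapt the barrier argument from the proof of Proposition \ref{UHPdecay}, replacing the barrier $e^{-x_1/2}$ by a tilted exponential tailored to the ray direction. Working in coordinates with $V = e_1$, let $\vec{w} = (\cos\theta, \sin\theta)$ be a unit vector in $\Pi$ with $\theta \neq \pi$. The first step is to select a tilt parameter $\alpha \in \R$ such that $\vec{w}\cdot(1,\alpha) = \cos\theta + \alpha\sin\theta > 0$. Such an $\alpha$ exists precisely because $\vec{w} \neq -V$: if $\sin\theta = 0$ then $\theta = 0$ (since $\theta \neq \pi$) and any $\alpha$ works, while if $\sin\theta \neq 0$ it suffices to take $\alpha$ of the same sign as $\sin\theta$ with $|\alpha|$ sufficiently large.

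The second step runs the barrier computation with $v = D_{\vec w}u = \vec w \cdot Du$ in place of $D_1 u$ and the tilted barrier $\phi(x) = e^{-\beta(x_1 + \alpha x_2)}$ in place of $e^{-x_1/2}$, for $\beta > 0$ to be chosen. Since each $D_i u$ satisfies $L(D_i u) = 0$ by Lemma \ref{WMP}, linearity gives $Lv = 0$, and the same algebra as in Proposition \ref{UHPdecay} yields $L(v^2) \geq 0$. Using that $|Du|, |D^2 u| = o(1)$ at infinity (so that the coefficients of $L$ converge to those of $\Delta + D_1$), a direct computation gives
\[
L\phi \;=\; \phi\,\beta\bigl[\beta(1+\alpha^2) - 1 + o(1)\bigr]\qquad \text{as }|x|\to\infty.
\]
Choosing $\beta \in \bigl(0,\,1/(1+\alpha^2)\bigr)$ makes $L\phi < 0$ outside a sufficiently large ball $B_M$. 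Setting $\rho_\epsilon = C\phi + \epsilon$ for $\epsilon > 0$ and a large constant $C$, the function $v^2 - \rho_\epsilon$ is a subsolution of $L$ outside $B_M$. Exhausting the half-plane $\{x_1 + \alpha x_2 > M\}$ by bounded ``semi-disk'' domains as in Proposition \ref{UHPdecay}, the boundedness of $|Du|$ (from Proposition \ref{blowdownuniq}) controls $v^2$ on the straight boundary $\{x_1 + \alpha x_2 = M\}$ for $C$ large, while the global decay $|Du| = o(1)$ forces $v^2 \leq \epsilon$ on the far arc $\partial B_R$ once $R$ is large depending on $\epsilon$. The weak maximum principle then yields $v^2 \leq \rho_\epsilon$ on each bounded domain, and sending $R \to \infty$ followed by $\epsilon \to 0$ gives $v^2 \leq C\phi$ throughout the half-plane.

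Finally, integrating $D_{\vec w}u$ along the ray $t \mapsto t\vec w$, on which $x_1 + \alpha x_2 = t(\cos\theta + \alpha\sin\theta)$ grows linearly, yields $|D_{\vec w}u(t\vec w)| \leq C e^{-\gamma t}$ with $\gamma = \tfrac{1}{2}\beta(\cos\theta + \alpha\sin\theta) > 0$. Hence $u(t\vec w)$ converges to a finite limit $K_{\vec w}$ with $|u(t\vec w) - K_{\vec w}| = O(e^{-\gamma t})$. Comparing limits on nearby rays via $|Du| = o(1)$, exactly as at the end of Proposition \ref{UHPdecay}, identifies $K_{\vec w}$ with the single limit $K$ obtained there, which is $0$ after the vertical normalization placing the asymptotic plane at $\Pi$. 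This gives $|u(t\vec w)| = O(e^{-\gamma t})$, as required. The main technical obstacle is controlling the $o(1)$ perturbation in $L\phi$ uniformly so that $L\phi < 0$ holds on a genuine half-plane exhaustion, and coordinating the parameters $R$ and $\epsilon$ so that the weak maximum principle applies despite $\phi$ possibly being much smaller than $v^2$ on the far arc. Note also that the rate $\gamma$ degenerates as $\vec w \to -V$, because the required tilt $|\alpha|$ diverges, consistent with the exclusion of the $-V$ direction in the corollary's statement.
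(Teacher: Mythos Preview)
Your argument is correct but takes a genuinely different route from the paper. The paper's proof is a single sentence: ``Immediate from comparison to tilted grim hyperplanes.'' That is, it compares the graph of $u$ directly with the explicit family of tilted grim reaper cylinders---exact translators in the direction $V$ that decay exponentially to a plane in every direction other than $-V$---using the strong maximum principle for translators (Corollary~\ref{SMP}). You instead rerun the analytic barrier argument of Proposition~\ref{UHPdecay} at the level of the linearized operator $L$, replacing $D_1 u$ by the directional derivative $D_{\vec w}u$ and the barrier $e^{-x_1/2}$ by a tilted exponential $e^{-\beta(x_1+\alpha x_2)}$ that is merely a strict supersolution of $L$ rather than an exact translator. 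Your approach is longer but more self-contained (it avoids introducing the tilted grim reaper family) and makes the dependence of the decay rate on the angle explicit, including its degeneration as $\vec w \to -V$. One minor point: the phrase ``exactly as at the end of Proposition~\ref{UHPdecay}'' for identifying $K_{\vec w}$ with $K$ is slightly imprecise, since that argument compares \emph{parallel} rays, whereas the $\vec w$-ray and the $e_1$-rays diverge. The fix is immediate: your own bound $|D_{\vec w}u| \le C e^{-\beta(x_1+\alpha x_2)/2}$, integrated along the $\vec w$-ray through the point $(t,0)$ with $t$ large, shows $u(t,0)$ is close to $K_{\vec w}$, while Proposition~\ref{UHPdecay} already gives $u(t,0)\to K$.
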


\begin{proof}
Immediate from comparison to tilted grim hyperplanes.
\end{proof}

\begin{cor}\label{uniformlybdd}
Each end is represented by the graph of a function $u$ that is uniformly bounded.
\end{cor}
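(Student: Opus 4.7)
The plan is to extend the decay estimates already established to a uniform bound on all of $\R^2 \setminus B_R$ by a maximum-principle argument on a half-plane. Proposition \ref{UHPdecay} gives that $|u| \le Ce^{-\alpha x_1}$ on $\{x_1 \ge M\}$, and Corollary \ref{raydecay} (via the tilted grim hyperplane barriers) provides exponential decay along every ray in $P$ except the ray $\{-tV: t>0\}$. Together these give uniform bounds on $u$ in any conical region avoiding a neighborhood of the $-V$ ray, so the only remaining task is to bound $u$ in a neighborhood of that ray.

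To do this I would set $\Omega = \{x \in \R^2 \setminus B_R : x_1 \le M\}$ with $M$ as in Proposition \ref{UHPdecay}, and let $K = \sup_{\partial \Omega} |u|$, which is finite: on the line $\{x_1 = M\}$ by Proposition \ref{UHPdecay} (where in fact $|u| \le Ce^{-\alpha M}$ uniformly in $x_2$) and on the compact arc $\partial B_R \cap \{x_1 \le M\}$ by continuity. Since any plane parallel to $V$ is a translator, the constant function $K$ solves the translator equation, so Lemma \ref{lineardiff} applied to $(u,K)$ yields that $v = u-K$ satisfies a linear equation $Lv = 0$ on $\Omega$. Proposition \ref{blowdownuniq} gives $|Du| = o(1)$, so $L$ is uniformly elliptic on $\Omega$ with lower-order coefficients asymptotic to those of $\Delta - D_1$, and one has $v \le 0$ on $\partial \Omega$ and $v = o(\varrho)$ at infinity.

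The conclusion $v \le 0$ on $\Omega$ (and, symmetrically applied to $-u$, $v \ge 0$) follows from a Phragm\'en--Lindel\"of maximum principle on the half-plane for $L$. Because the drift $-D_1$ prevents any polynomial function from simultaneously being a supersolution of $L$ and blowing up in the $-V$ direction, I would use the conjugation $\tilde v = e^{-x_1/2} v$, which reduces the leading-order equation to the modified Helmholtz equation $\Delta \tilde v = \tilde v/4$. For this equation, classical exponential supersolutions in the conjugated variable are available, and the sublinear growth of $v$ is below the relevant Phragm\'en--Lindel\"of threshold; standard comparison with $\epsilon w$ for such a supersolution $w$ blowing up in the $-V$ direction, followed by $\epsilon \to 0$, yields $v \le 0$ on $\Omega$. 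Combined with the bound on $\{x_1 \ge M\}$, this gives uniform boundedness of $u$ on $\R^2 \setminus B_R$.

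The main obstacle is arranging the Phragm\'en--Lindel\"of step for the linearized translator equation on the unbounded half-plane: naive polynomial barriers fail because of the asymmetry introduced by the drift, but the exponential conjugation reduces the problem to a well-studied uniformly elliptic model (modified Helmholtz), after which the argument is standard.
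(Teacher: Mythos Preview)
Your approach works in outline (modulo a sign slip), but it is far more elaborate than what the paper does. The paper's proof is a single line: compare with grim hyperplanes placed just above the highest asymptotic plane and just below the lowest one. Proposition~\ref{UHPdecay} already gives a uniform bound on $u$ in $\{x_1\ge M\}$, so a grim reaper cylinder living in a slab $\{c-\pi/2<x_3<c+\pi/2\}$ with $c$ chosen above $\sup_{\{x_1\ge M\}}u$ is disjoint from $\Sigma$ on $\{x_1\ge M\}$; sliding it in the $-x_1$ direction and invoking the strong maximum principle (Corollary~\ref{SMP}) shows $\Sigma$ can never enter that slab, which bounds $u$ from above globally. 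The lower bound is symmetric. This is purely geometric and bypasses any Phragm\'en--Lindel\"of machinery.

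On your route, two remarks. First, the conjugation has the wrong sign: for the leading-order operator $\mathcal L=\Delta+D_1$, setting $\tilde v=e^{-x_1/2}v$ produces $\Delta\tilde v+2D_1\tilde v+\tfrac34\tilde v=0$, whose zeroth-order term has the wrong sign for the maximum principle. The correct substitution is $\tilde v=e^{+x_1/2}v$ (equivalently $v=e^{-x_1/2}\tilde v$), which indeed yields $\Delta\tilde v-\tfrac14\tilde v=0$; since $e^{x_1/2}$ is bounded on $\{x_1\le M\}$, the sublinear growth of $v$ transfers to $\tilde v$, and the Phragm\'en--Lindel\"of step then goes through (for instance with a product barrier such as $\cosh(\tfrac{x_2}{4})\cosh(\tfrac{M-x_1}{4})$, a supersolution of $\Delta-\tfrac14$ growing in every direction of the half-plane). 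Second, this conjugation is exactly what underlies the Bessel barrier $e^{-x_1/2}K_0(r/2)$ used later in Proposition~\ref{finaldecay}; so your idea is thematically consistent with the paper, but it belongs to the finer asymptotics, whereas the mere uniform bound follows from the much cruder grim-hyperplane comparison.
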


\begin{proof}
This follows from comparison to grim hyperplanes above and below the top and bottom asymptotic planes, respectively.
\end{proof}

\begin{cor}
Any asymptotic plane of the translator $\Sigma$ must have distance less than $\pi$ from another asymptotic plane of $\Sigma$.
\end{cor}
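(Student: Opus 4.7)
The plan is to argue by contradiction, using a grim hyperplane as a barrier together with the strong maximum principle (Corollary \ref{SMP}) and unique continuation for analytic translators. Suppose some asymptotic plane $\Pi_0$, at height $x_3 = K_0$, has distance at least $\pi$ from every other asymptotic plane of $\Sigma$. Reflecting in $\Pi$ if necessary, I may assume that the nearest asymptotic plane above $K_0$ lies at some height $K' \ge K_0 + \pi$ (with the convention $K'=+\infty$ if none exists). In the strict case $K' > K_0+\pi$, pick $\delta \in (0, K' - K_0 - \pi)$ and let $\Sigma_G$ be the grim hyperplane translating with unit speed in $V$ whose two asymptotic planes are $\{x_3 = K_0+\delta\}$ and $\{x_3 = K_0+\delta+\pi\}$. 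Its slab $S = (K_0+\delta, K_0+\delta+\pi) \subset (K_0, K')$ contains no asymptotic plane of $\Sigma$.

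I would then slide $\Sigma_G^{(t)} := \Sigma_G + tV$ in the translation direction. By Proposition \ref{UHPdecay} and Corollary \ref{raydecay}, the ends of $\Sigma$ converge exponentially to their asymptotic planes along any ray in $\Pi$ not pointing in the $-V$ direction, so $\Sigma \cap \{x_3 \in S\}$ is bounded to the right in $x_1$ and in both directions of $x_2$. For $t$ large, $\Sigma_G^{(t)}\subset\{x_1\ge t\}$ is therefore disjoint from $\Sigma$; on the other hand, since $\Sigma$ is connected with ends on both sides of the slab $S$, it must cross $S$, forcing intersection for sufficiently small $t$. Let $t_*$ be the first value of $t$ at which the two surfaces touch. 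Because $\Sigma_G^{(t)}$ escapes to $x_1=+\infty$ precisely as $x_3$ approaches the boundary of $S$, where $\Sigma$ stays uniformly outside $S$ by the strict gap, the first contact must occur at a finite interior point $p$. There both surfaces are tangent and one lies on a single side of the other, so Corollary \ref{SMP} yields that they coincide in a neighborhood of $p$; real-analyticity of translators together with the connectedness of $\Sigma$ then forces global equality. This is a contradiction, since the grim hyperplane has infinite total curvature (due to its $\R$-invariance in $x_2$) whereas $\Sigma$ has finite total curvature.

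The main obstacle is the borderline case $K' = K_0+\pi$, in which $\Sigma_G$'s asymptotic planes would coincide exactly with $\Sigma$'s and the sliding argument degenerates: the two surfaces meet ``at infinity'' along their common asymptotic planes for every $t$. To handle this case I would exploit the disparity between the decay rates of $\Sigma$ (rate $e^{-\alpha x_1}$ with $\alpha < 1/2$, by Proposition \ref{UHPdecay}) and of the grim hyperplane (rate $e^{-x_1}$); a small perturbation of $\Sigma_G$, such as a vertical translate or a tilted grim hyperplane of width marginally exceeding $\pi$, will make $\Sigma_G^{(t)}$ genuinely disjoint from $\Sigma$ for large $t$ while preserving its barrier character, after which the same first-contact-plus-maximum-principle argument closes the case.
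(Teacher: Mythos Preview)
Your approach---sliding a grim hyperplane in the gap and invoking the strong maximum principle at the first interior contact---is exactly the paper's one-line argument, and your treatment of the strict case $K' > K_0 + \pi$ is correct and more carefully justified than the paper's.

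You are also right to isolate the borderline case $K' = K_0 + \pi$ as the subtle point; the paper's proof does not address it either. However, your proposed fix does not work as written. The decay-rate disparity you invoke points the wrong way: the grim hyperplane approaches its asymptotic planes like $e^{-x_1}$, while by Proposition~\ref{UHPdecay} the ends of $\Sigma$ approach theirs only like $e^{-\alpha x_1}$ with $\alpha < 1/2$. So if the end at height $K_0$ enters the slab (i.e.\ $u_0 > K_0$), then along that end the ``grim-reaper coordinate'' $F(p) = x_1(p) + \log\cos\bigl(x_3(p) - K_0 - \tfrac{\pi}{2}\bigr)$ satisfies
\[
F \approx x_1 + \log\bigl(u_0 - K_0\bigr) \gtrsim x_1 - \alpha x_1 = (1-\alpha)\,x_1 \to +\infty,
\]
so $\sup F = +\infty$, the surfaces intersect for \emph{every} $t$, and no first interior contact exists. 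Your two suggested perturbations do not help: a vertical translate of $\Sigma_G$ places one of $\Sigma$'s asymptotic planes strictly inside the new slab (so that end stays in the slab for all $x_1$, and the same divergence of $F$ occurs), and a tilted grim hyperplane has slab width $\pi/\cos\theta > \pi$, which now contains both asymptotic planes. Thus the borderline case remains open in your argument, just as it is tacitly left open in the paper's.
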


\begin{proof}
If not, we could place a grim hyperplane between these two asymptotic planes, and obtain an interior point of contact with $\Sigma$.
\end{proof}

In order to improve on Proposition \ref{UHPdecay} and its consequences, we introduce another barrier, 
\[\phi = r^{1/4}e^{-x_1/2}K_0(r/2).\]
Here $K_0(x)$ denotes the modified Bessel function of the second kind, with parameter $0$, i.e. the solution to the modified Bessel's equation,
\[x^2 \frac{d^2y}{dx^2} + x\frac{dy}{dx} - x^2y = 0.
\]
The key point is that the decay and uniform boundedness of $u$ established in Proposition \ref{UHPdecay} allow us to find domains in which $\phi$ acts as a barrier for $u^2$.
\begin{prop}\label{finaldecay}
Let $\Sigma_0$ and $u: \R^2 \setminus B_R \rightarrow \R$ be as in Proposition \ref{UHPdecay}. 
\[|u| \le Cr^{1/\alpha}e^{-x_1/4}\sqrt{K_0(r/2)},
\]
where $\alpha > 4$. In particular, $u(x) = O\big(r^{-\frac{1}{4(1+\delta)}}\big)$, for $\delta > 0$ as $r \rightarrow \infty$. 
\end{prop}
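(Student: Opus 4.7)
The plan is to carry out a maximum principle comparison, using the parametric variant $\phi_\alpha:=r^{2/\alpha}e^{-x_1/2}K_0(r/2)$ of the stated barrier as a supersolution against $u^{2}$ as a subsolution, and then taking square roots. This parallels the argument of Proposition \ref{UHPdecay}, with $u$ replacing $v=D_1u$.

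Since the plane $u\equiv 0$ is itself a translator, applying Lemma \ref{lineardiff} with $u_1\equiv 0$ and $u_2=u$ shows that $u$ satisfies a linear divergence-form elliptic equation, which upon expansion becomes $Lu := a_{ij}D_{ij}u+\tilde b_i D_i u = 0$. By Proposition \ref{blowdownuniq} we have $|Du|=o(1)$ as $r\to\infty$, so $a_{ij}\to\delta_{ij}$ and $\tilde b\to(1,0)$; thus $L$ is a small quasilinear perturbation of the constant drift Laplacian $L_0:=\Delta+D_1$ outside a large ball. A direct calculation, analogous to the one for $v=D_1u$ in the proof of Proposition \ref{UHPdecay}, yields
\[ L(u^2) \;=\; 2u\,Lu + 2a_{ij}D_iu\,D_ju \;=\; 2a_{ij}D_iu\,D_ju \;\ge\; 0, \]
so $u^2$ is a nonnegative $L$-subsolution.

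I next verify that $\phi_\alpha$ is a strict $L_0$-supersolution for $r$ large. The starting point is the identity $L_0\psi_0=0$ for $\psi_0:=e^{-x_1/2}K_0(r/2)$: with the ansatz $\psi=f(r)e^{-\lambda x_1}$ one computes $L_0\psi = e^{-\lambda x_1}\bigl[\Delta_r f+(\lambda^2-\lambda)f+(1-2\lambda)f'(x_1/r)\bigr]$, and choosing $\lambda=1/2$ cancels the cross term and leaves the modified Bessel equation $\Delta_r f=f/4$, which $K_0(r/2)$ satisfies. Writing $\phi_\alpha=r^{\beta}\psi_0$ with $\beta=2/\alpha$, and using $K_0'(x)=-K_1(x)$ together with the Leibniz identity $L_0(gh)=gL_0h+hL_0g+2\nabla g\cdot\nabla h$, a direct calculation gives
\[ L_0\phi_\alpha \;=\; \beta\, r^{\beta-2}e^{-x_1/2}\bigl[\beta K_0(r/2)-r\,K_1(r/2)\bigr], \]
which is strictly negative for $r>\beta$ since $K_1>0$. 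The quasilinear error $L\phi_\alpha-L_0\phi_\alpha$ is $O(|Du|^2)\cdot(|D^2\phi_\alpha|+|D\phi_\alpha|)$; since the first two derivatives of $\phi_\alpha$ are $O(\phi_\alpha)$ (with ratios bounded using $K_1/K_0\to 1$), this error is $o(\phi_\alpha)$ and is dominated by $L_0\phi_\alpha$ for $r$ large.

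The comparison itself runs on $D_S=B_S\setminus B_R$ for large fixed $R$ and $S\to\infty$. On the inner boundary $\partial B_R$, Corollary \ref{uniformlybdd} gives $u^2\le K^2$ and $\phi_\alpha$ has a positive lower bound, so $C$ can be chosen large enough that $u^2\le C\phi_\alpha$ on $\partial B_R$. Proposition \ref{UHPdecay} and Corollary \ref{raydecay} force $u^2\to 0$ at infinity in every direction except possibly $-V$; along $-V$ itself the asymptotic $K_0(r/2)\sim\sqrt{\pi/r}\,e^{-r/2}$ gives $\phi_\alpha\sim\sqrt{\pi}\,r^{2/\alpha-1/2}$, which decays \emph{only} when $\alpha>4$ --- this is the sole place the hypothesis $\alpha>4$ enters. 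A Phragm\'en--Lindel\"of step on $D_S$ (combined with an $\varepsilon$-slack added to the supersolution exactly as in Proposition \ref{UHPdecay}) lets me send $S\to\infty$ and conclude $u^2\le C\phi_\alpha$ in $\{r>R\}$. Taking square roots and applying $\sqrt{K_0(r/2)}\sim\pi^{1/4}r^{-1/4}e^{-r/4}$ gives the stated bound; solving $1/\alpha-1/4=-1/(4(1+\delta))$ converts it to the ``in particular'' form. The main obstacle is this Phragm\'en--Lindel\"of step along the $-V$ direction, where both $u$ and $\phi_\alpha$ decay only polynomially, and where the quasilinear perturbation $L-L_0$ must be controlled uniformly out to $r=\infty$.
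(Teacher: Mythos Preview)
Your overall strategy is the same as the paper's: show $u^{2}$ is a subsolution, show $\phi_\alpha=r^{2/\alpha}e^{-x_1/2}K_0(r/2)$ is a supersolution for the drift Laplacian and hence for the perturbed operator, and run a comparison on exhausting domains. Your computation $L_0\phi_\alpha=\beta r^{\beta-2}e^{-x_1/2}[\beta K_0(r/2)-rK_1(r/2)]$ is correct, and the perturbation estimate is fine. Where you diverge from the paper is that you use the frozen-coefficient linear operator $L$ coming from Lemma~\ref{lineardiff}, whereas the paper works directly with the quasilinear operator $Q$ and invokes \cite[Theorem~10.1]{GT}; either route is acceptable.

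There is, however, a genuine gap in your boundary comparison, precisely at the point you flag as ``the main obstacle.'' The $\varepsilon$-slack device from Proposition~\ref{UHPdecay} does \emph{not} transfer here. In that proposition the subsolution was $v^{2}=(D_1u)^{2}$, which tends to zero in every direction because $|Du|=o(1)$; so for any $\varepsilon>0$ one can choose $S$ large enough that $v^{2}<\varepsilon$ on all of $\partial B_S$. In the present situation the subsolution is $u^{2}$, and along the ray $-V$ you only know $u^{2}$ is \emph{bounded} (Corollary~\ref{uniformlybdd}), not that it decays. Meanwhile $\phi_\alpha\to 0$ along $-V$ (this is exactly your observation that $\alpha>4$ forces decay). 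Hence on the portion of $\partial B_S$ near $-V$ you cannot arrange $u^{2}\le C\phi_\alpha+\varepsilon$ for small $\varepsilon$, and no Phragm\'en--Lindel\"of argument rescues this: you are in a full exterior domain, not a sector, and the subsolution need not decay in the bad direction.

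The paper closes this gap by adding a second auxiliary term to the barrier and changing the domain geometry. One works on the square $S_N$ minus $B_{R_0}$ and compares $u^{2}$ against
\[
\phi_{\varepsilon,N}\;=\;C_1\phi_\alpha\;+\;C_2\,e^{-(x_1+N)/2}\;+\;\varepsilon.
\]
The extra exponential is itself a $Q$-supersolution and equals $C_2$ on the back edge $\{x_1=-N\}$; choosing $C_2>\|u\|_{L^\infty}^{2}$ handles that edge via Corollary~\ref{uniformlybdd}. The remaining three edges of $S_N$ lie in a sector excluding $-V$, where Corollary~\ref{raydecay} gives $u^{2}<\varepsilon$ for $N$ large. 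After applying the maximum principle on $S_N\setminus B_{R_0}$ one sends $N\to\infty$, so that $C_2e^{-(x_1+N)/2}\to 0$ pointwise, and then $\varepsilon\to 0$. This auxiliary term is the missing idea in your argument.
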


\begin{proof} We first apply the linear operator $\mathcal{L} = \Delta + D_{1}$ to $\phi$.  
\begin{multline*}
    \Delta \phi + D_{1}\phi \\ = e^{-x_1/2}K_0(r/2)\mathcal{L}(r^{1/4}) + r^{1/4}\mathcal{L}\big(e^{-x_1/2}K_0(r/2)\big) + 2\langle \nabla e^{-x_1/2}K_0(r/2) , \nabla r^{1/4} \rangle.
\end{multline*}

We evaluate each summand separately.

\begin{align*}
    \mathcal{L}\big(e^{-x_1/2}K_0(r/2)\big) &= \frac{1}{4}e^{-x_1/2}K_0(r/2) + e^{-x_1/2}\Delta K_0(r/2) \\
    &\;\;\;\;\; - \langle e^{-x_1/2}\nabla x_1, \nabla K_0(r/2) \rangle + e^{-x_1/2} D_1 K_0(r/2) - \frac{1}{2}e^{-x_1/2}K_0(r/2)\\
    &= e^{-x_1/2}\Delta K_0(r/2) - \frac{e^{-x_1/2}}{4}K_0(r/2) \\
    &= e^{-x_1/2} \bigg(\frac{\partial^2}{\partial r^2} K_0(r/2) + \frac{1}{r} \frac{\partial}{\partial r} K_0(r/2) - \frac{1}{4}K_0(r/2) \bigg) \\
    &= e^{-x_1/2} \bigg( \frac{1}{4}\frac{\partial^2 K_0}{\partial r^2}(r/2) + \frac{1}{2r}\frac{\partial K_0}{\partial r} (r/2) - \frac{1}{4}K_0(r/2) \bigg) \\
    &= \frac{e^{-x_1/2}}{r^2}\bigg( \bigg(\frac{r}{2}\bigg)^2 \frac{\partial^2 K_0}{\partial r^2}(r/2) + \frac{r}{2}\frac{\partial K_0}{\partial r} (r/2) - \bigg(\frac{r}{2}\bigg)^2 K_0(r/2) \bigg) \\
    &= 0,
\end{align*}
by the definition of the modified Bessel function $K_0$. We now evaluate
\begin{align*}
    \mathcal{L}r^{1/4} &= \frac{\partial^2}{\partial r^2} r^{1/4} + \frac{1}{r}\frac{\partial}{\partial r}r^{1/4} + D_1 r^{1/4} \\
    &= -\frac{3}{16}r^{-7/4} + \frac{1}{4}r^{-7/4} + \frac{(e_1 \cdot e_r)}{4} r^{-3/4}\\
    &= \frac{1}{16}r^{-7/4} + \frac{(e_1 \cdot e_r)}{4} r^{-3/4}
\end{align*}
Now we calculate the final term. 
\begin{align*}
2\langle \nabla r^{1/4}, \nabla e^{-x_1/2}K_0(r/2) \rangle &= \frac{r^{-3/4}e^{-x_1/2}}{4}\frac{\partial K_0}{\partial r}(r/2) - \frac{r^{-3/4}e^{-x_1/2}}{4} K_0(r/2) (e_1 \cdot e_r) 
\end{align*}
Combining these results, we obtain
\begin{align*}
   \Delta \phi + D_{1}\phi &= 0 + e^{-x_1/2}K_0(r/2)\bigg(\frac{1}{16}r^{-7/4} + \frac{(e_1 \cdot e_r)}{4} r^{-3/4}\bigg)\\
   &\;\;\;\;\;\;\; + \frac{r^{-3/4}e^{-x_1/2}}{4}\frac{\partial K_0}{\partial r}(r/2) - \frac{r^{-3/4}e^{-x_1/2}}{4} K_0(r/2) (e_1 \cdot e_r) \\
   &= \frac{r^{-3/4}e^{-x_1/2}}{4}\frac{\partial K_0}{\partial r}(r/2) + o\bigg(\bigg|\frac{r^{-3/4}e^{-x_1/2}}{4}\frac{\partial K_0}{\partial r}(r/2)\bigg|\bigg).
\end{align*}
The asymptotic expansion at infinity of $K_0'(x)$ is as follows:
\[K_0'(x) \sim - \sqrt{\frac{\pi}{2x}}e^{-x}\bigg(1 + \frac{3}{8x} + O\bigg(\frac{1}{x^2}\bigg) \bigg).
\]
Thus, for sufficiently large $r$, $K_0'(r/2)$ is negative. Thus,
\[\mathcal{L}\phi < 0 \;\;\;\;\text{  for $r$ sufficiently large.}
\]
We now consider the operator $Q$ defined in \eqref{translatoreqn}, where we set $(v_1, v_2, v_3) = (1,0,0)$.
\begin{align*}Q\phi &= (1+|D\phi|^2)\mathcal{L}\phi - D^2\phi(D\phi, D\phi) \\ 
&= a_{ij} D^2_{ij}\phi + b_j D_j \phi
\end{align*}
where $(x, u, Du) = (x, z, p)$ and 
\[a_{ij} = 1 + |p|^2\delta_{ij} + p_ip_j
\]
\[b_{j} = (1+ |p|^2)\delta_{1j}
\]
We know that $\phi$ has the following asymptotic expansion
\[\phi(x_1,x_2) = r^{1/4}e^{-x_1/2}K_0(r/2) \sim Cr^{-1/4}e^{\frac{-x_1-r}{2}}\bigg(1 - \frac{1}{8r} + O\bigg(\frac{1}{r^2}\bigg) \bigg)
\]
Consequently, we know that for sufficiently large $r$, the partial derivatives $D_i\phi$ are bounded above by a multiple of $r^{-5/4}$. Thus, 
\[|D^2\phi(D\phi, D\phi)| = O(r^{-5/2}e^{-x_1/2}K_0(r/2)) = o(\mathcal{L}\phi)
\]
as $r \rightarrow \infty$. We conclude that $Q\phi < 0$. Consider the family of functions 
\[\phi_{\epsilon, N} = C_1\phi + C_2\exp(-(x_1 + N)/2) +\epsilon,
\]
where $C_1, C_2>0$ are fixed constants which depends on the upper bound of the function $u$ in the annulus $\R^2 \setminus B_{R_0}$. A quick calculation yields
\begin{multline*}
    Q(\exp(-(x_1 + N)/2)) = \frac{\exp(-(x_1 + N)/2)}{4} - \frac{\exp(-(x_1 + N)/2)}{2} \\ - \frac{\exp(-(x_1 + N))}{4}\bigg(\frac{\exp(-(x_1 + N)/2)}{4}\bigg).
\end{multline*}
This expression is clearly negative for $x_1 \ge -N$.  

Finally, we calculate $Q(u^2)$, where $u$ satisfies the translator equation \eqref{translatoreqn}, i.e. $Qu =0$. 
\begin{align*}
Q(u^2) &= a_{ij}D^2_{ij}(u^2) + b_j D_j (u^2) \\
&= a_{ij}(D_i(2uD_ju)) + 2 u b_j D_j u \\
&= 2a_{ij}(D_iu D_ju) + 2ua_{ij}D_{ij}^2u + 2ub_j D_j u \\
&= 2a_{ij}(D_iu D_ju)\\
&= 2(1+ |Du|^2)|Du|^2 + 2D_iu D_j u (D_i u D_j u) \\
&= 2(1+ |Du|^2)|Du|^2 + 2|Du|^4\\
&\ge 0.
\end{align*}
Now that we know that $Q(\phi_{\epsilon, N} - u^2) \le 0$, we may apply the following quasilinear maximum principle:

\begin{thm}\cite[Theorem 10.1]{GT} Let $u, v \in C^0(\bar{\Omega})\cap C^2(\Omega)$ satisfy $Qu \ge Qv$ in $\Omega$, $u \le v$ on $\partial \Omega$, where
\begin{itemize}
    \item [(i)] $Q$ is locally uniformly elliptic with respect to either $u$ or $v$;
    \item [(ii)]the coefficients $a_{ij}$ are independent of $z$;
    \item [(iii)] the coefficient $b(x,p) = b_j(x,p)p_j$ is non-increasing in $z$ for each $(x,p) \in \Omega \times \R^n$;
    \item [(iv)] the coefficients $a_{ij}, b$ are continuously differentiable with respect to the $p$ variables in $\Omega \times \R \times \R^n$.
\end{itemize}
It then follows that $u \le v$ in $\Omega$.
\end{thm}
It is clear that for sufficiently large $R_0$, conditions (i)-(iv) are satisfied by $u^2$ and $\phi$ on the annulus $\R^2 \setminus B_{R_0}$. Consider the large square $S_N$ with corners $(N,N), (N,-N), (-N,N), (-N,-N)$ and $N \ge R_0 > 0$. Let $\Omega_N = S_N \setminus B_{R_0}$. Set $C_1 > 0$ so that $C_1\phi|_{\partial B_{R_0}} \ge ||u^2||_{L^\infty(\R^2 \setminus B_{R_0})}$ and set $C_2 > ||u^2||_{L^\infty(\R^2 \setminus B_{R_0})}$. By Proposition \ref{UHPdecay}, Corollary \ref{raydecay}, and Corollary \ref{uniformlybdd}, $\phi_{\epsilon, N} > u^2$ on $\partial \Omega_N$ for sufficiently large $N$ and some arbitrary small $\epsilon >0$. On the edge from $(N, -N)$ to $(-N,-N)$, $x_1 \equiv -N$ and $\phi_{\epsilon, N} > C_2 > u^2$. The other three edges lie in a sector not containing $-V$, so by Corollary \ref{raydecay}, for sufficiently large values of $N$, $u^2 < \epsilon$ on these edges. Thus, given an $\epsilon$, for sufficiently large $N$, $\phi_{\epsilon, N} > u^2$ on $\partial \Omega_N$.

Having chosen $R_0 > 0$ sufficiently large, on the domains $\Omega_N$, we have $Q(\phi_{\epsilon, N}) < 0$, so we may apply \cite[Theorem 10.1]{GT} and obtain that $\phi_{\epsilon,N} - u^2 \ge 0$ on $\Omega_N$. Sending $N \rightarrow \infty$, we see that $C_1\phi + \epsilon \ge u^2$ on $\R^2 \setminus B_{R_0}$. Since $\epsilon >0$ is arbitrary, we send $\epsilon$ to zero and conclude that $C_1\phi \ge u^2$.

We finish the proof of the proposition by observing that all of the above arguments apply when we select $\phi = r^{2/\alpha} e^{-x_1/2} K_0(r/2)$ and $\alpha \in (4,\infty)$ is an arbitrary constant.
\end{proof}

\begin{proof}[Proof of Theorem \ref{graphicalends}]
The theorem immediately follows from Proposition \ref{blowdownuniq} and Proposition \ref{finaldecay}. 
\end{proof}

\section*{Acknowledgments}  
I would like to thank Bing Wang for suggesting this problem. I would also like to thank Bing Wang and Sigurd Angenent for many helpful conversations and their careful feedback on the drafts of this paper. I am also very grateful to Sigurd Angenent for his invaluable guidance in the writing of \S 5, especially for explaining the importance of the Bessel functions and sharing his overall wisdom around asymptotic formulae. Lastly, I'd like to thank Professor Yuxiang Li for pointing out that conditions on the area ratios could be dropped and suggesting the proof of Proposition \ref{arearatios}.

\addresseshere

\newpage
\appendix

\section{Illustrations for the Proof of Proposition 4.1}\label{figs}
\noindent\begin{minipage}{\textwidth}
    \centering
    \captionof{figure}{Case 1}
    \includegraphics[angle=90, origin=c, scale=0.25, trim={0 0 15cm 0},clip]{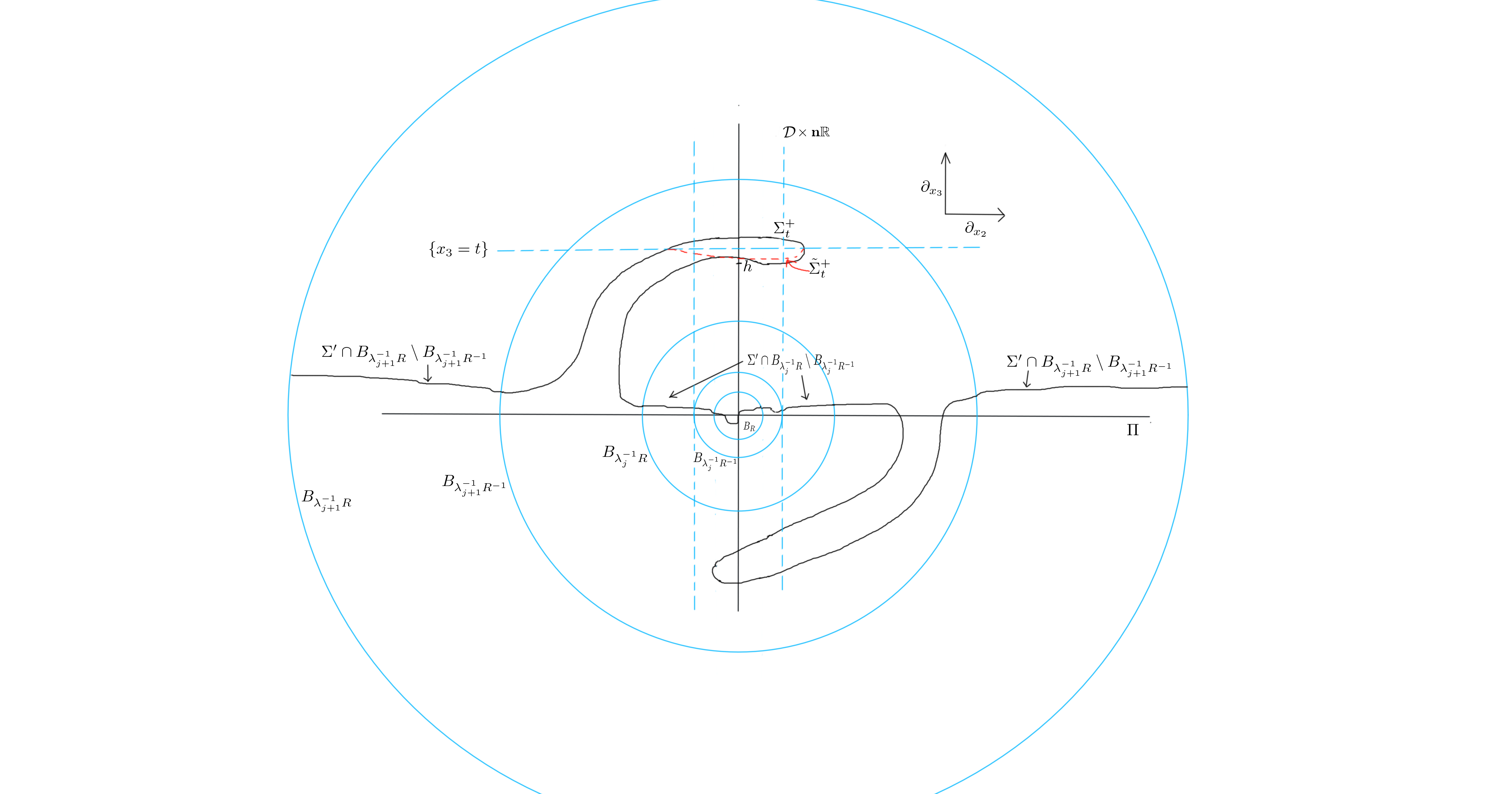}
    \label{fig1}
\end{minipage}

\noindent\begin{minipage}{\textwidth}
    \centering
    \captionof{figure}{Case 2}
    \includegraphics[angle=90, origin=c, scale=0.25, trim={0 0 10cm 0},clip]{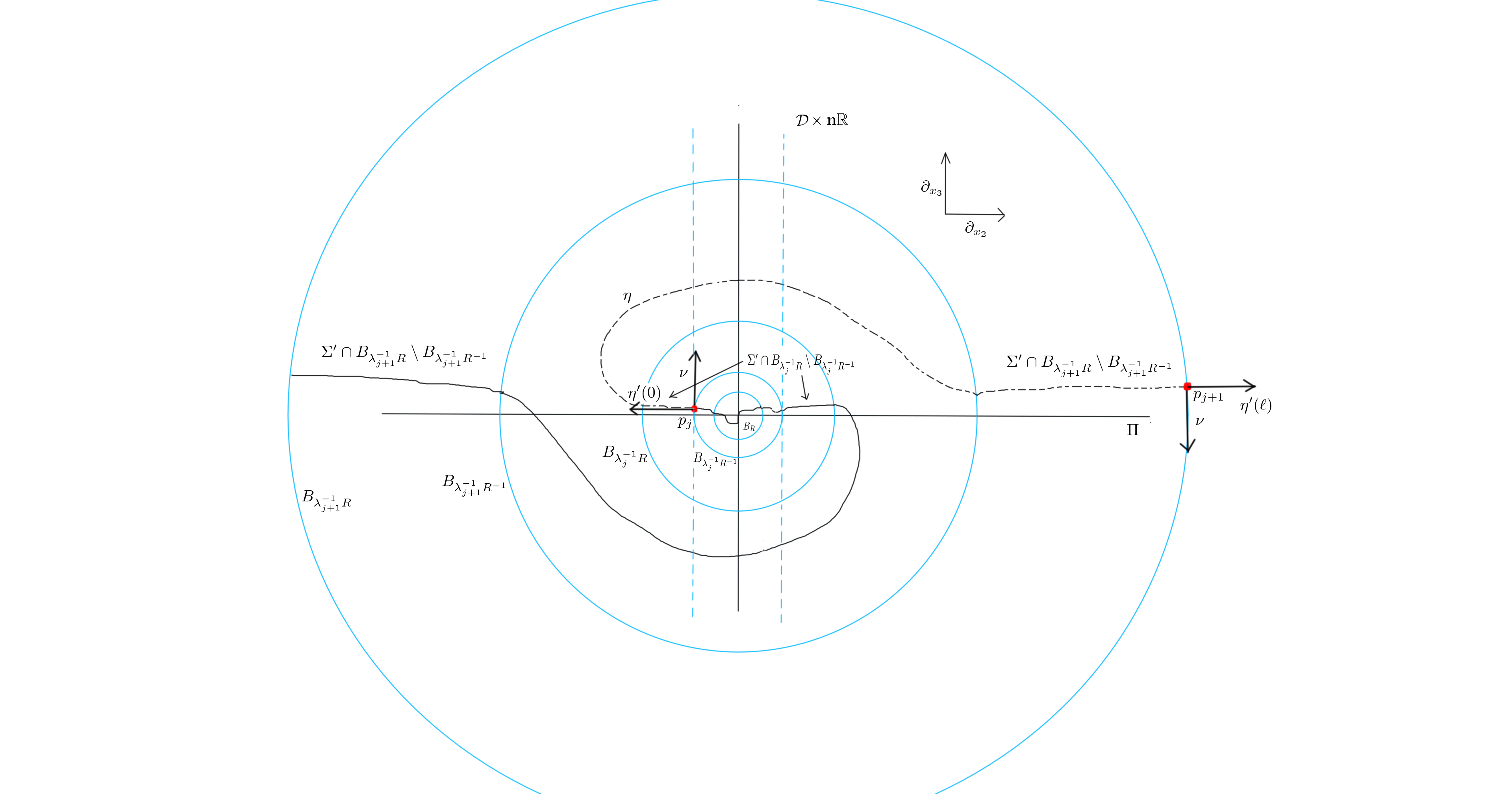}
    \label{fig2}
\end{minipage}

\unappendix

\end{document}